\newtheorem*{theorem*}{Theorem}
\newtheorem{lemma}{Lemma}[section]
\newtheorem{proposition}[lemma]{Proposition}
\theoremstyle{definition}
\newtheorem{definition}[lemma]{Definition}
\theoremstyle{remark}
\newtheorem{remark}{Remark}[section]
\newcommand{\naturales}{\mathbb{N}}
\newcommand{\real} {\mathbb{R}}
\newcommand{\enteros} {\mathbb{Z}}
\newcommand{\R}{{\bf\sf R}}
\newcommand{\subR}{{\bf\sf Q}}
\newcommand{\dist}{\mbox{dist}}
\newcommand{\diam}{\mbox{diam}}
\newcommand{\dH}{\dim_{\mathcal H}\,}
\title[Variational principles and nonuniformly hyperbolic dynamics]{A variational principle for systems with \\ nonuniformly hyperbolic behavior with \\ 
applications to the dimension theory}
\author{ 
Fernando Jos\'e S\'anchez-Salas}
\address{
Departamento de Matem\'aticas, Facultad Experimental de Ciencias, Universidad del Zulia, Avenida Universidad, Edificio Grano de Oro, Maracaibo, Venezuela
}
\email{fjss@fec.luz.edu.ve}
\subjclass[2010]{37D25, 37D35}
\keywords{Nonuniformly hyperbolic systems, nonadditive thermodynamic formalism, fractal dimensions}
\thanks{ This work was partially supported by the Associateship Programme of ICTP}
\date{July 29, 2015}
\begin{document}

\begin{abstract}
Let $f$ be a $C^{1+\alpha}$ nonuniformly hyperbolic diffeomorphism. We use a a nonadditive version of the topological pressure of a 
class of admissible, possibly noncontinuous potentials $P^*(\Phi)$ to prove the following variational equation: 
$P^*(\Phi) = \sup_{\Omega \in {\mathcal H}}P^*(f|\Omega,\Phi)$ supremum taken over the set ${\mathcal H}$ of basic subsets in $M$. As a consequence we 
find a lower bound for the Cantor dimension of the stable and unstable Cantor sets of a non trivial conformal nonuniformly hyperbolic isolated sets.
\end{abstract}

\maketitle

In this note we use ideas from non additive thermodynamic formalism from \cite{barreira.2011} and methods of \cite{katok.mendoza} to approximate 
dimension-like quantities of the dynamics along suitable sequences of hyperbolic Cantor sets in systems with some hyperbolicity in the phase space without being 
uniformly hyperbolic. This type of questions has been considered previously by several authors. In 1984, A. Katok laid the foundations to 
study this type of questions in his seminal paper \cite{katok} about relations between entropy, periodic orbits and Lyapunov exponents of systems with 
nonuniformly hyperbolic behavior. We refer the reader to \cite{gelfert.wolf}, \cite{gelfert.2009}, \cite{gelfert.2010}, \cite{liang.liu.sun}
\cite{luzzatto.sanchez}, \cite{sanchez-salas.1}, \cite{sanchez-salas.2} and \cite{wang.sun} for some recent contributions to the subject.

As a consequence of our approch we prove the following
\ 
\\
\\
{\bf Theorem A} \ {\em Let $\Lambda$ be a compact, $f$-invariant, locally maximal, topologically transitive, nonuniformly hyperbolic subset of a 
conformal $C^{1+\alpha}$ diffeomorphism. Suppose in addition that $\Lambda$ is the support of an ergodic nonatomic hyperbolic measure $\mu$. Then,
\begin{equation}\label{main}
\dH(W^s(x) \cap \Lambda) \geq d^s \quad\text{and}\quad \dH(W^u(x) \cap \Lambda) \geq d^u
\end{equation}
where  $0 < d^s \leq \dim(E^s)$ (resp. $0 < d^u \leq \dim(E^u)$) is the unique solution to the \emph{Bowen equation},
\begin{equation}\label{bowen.equation.1}
\sup_{\nu \in \mathcal{M}_f(\Lambda)}\{h(\nu) - d^s\int\log\phi^sd\nu\} = 0,
\end{equation}
respectively,
\begin{equation}\label{bowen.equation.2}
\sup_{\nu \in \mathcal{M}_f(\Lambda)}\{h(\nu) - d^u\int\phi^ud\nu\} = 0,
\end{equation}
where $\phi^s(x) := -\log\|Df|E^s(x)\|$ and $\phi^u(x) := -\log\|Df|E^u(x)\|$ are the stable and unstable potentials.
}
\ 
\\
\\
Here $W^s(x)$ (resp. $W^u(x)$) are the stable (resp. unstable) manifold by $x$ and $\dH(X)$ is the Hausdorff dimension of a set $X \subset M$, putting in $M$ the distance defined by the Riemannian metric. We refer 
to \cite{falconer.1985}. The Hausdorff dimension of the stable (resp. unstable) Cantor sets $W^s(x) \cap \Lambda$ (resp. $W^u(x) \cap \Lambda$) 
give us a quantitative estimation of the size of $\Lambda$.

We recall that a $C^{1}$ diffeomorphism $f$ is called {\em conformal} is there exists a continuous positive function $a(x) > 0$ such that $Df(x) = a(x)I_x$ where 
$I_x : T_xM \to T_{f(x)}M$ is an isometry. 

In contrast with the nonuniformly and nonconformal case, the dimension theory of conformal uniformly hyperbolic sets is well understood in terms of 
the thermodynamics of these systems. 

Thermodynamic formalism is the primary source of variational principles in dynamical systems. Its main ingredientes are the 
{\em topological pressure} $P(\phi)$ of a continuous potential $\phi$, a variational principle and equilibrium states. Pressure is a 
topological invariant of the dynamics introduced by D. Ruelle in \cite{ruelle.1973} for a class of $\enteros^n$ actions arising naturally in the 
formalism of equilibrium statistical physics and later extended for continuous maps $f$ of compact metric space in \cite{walters} who proved 
the following well-known \emph{variational principle for the topological pressure}: {\em Let $f$ be a continuous transformation of a compact metric space and $\phi$ continuous. Then, 
\begin{equation}\label{additive.variational.principle}
P(\phi) = \sup_{\mu \in {\mathcal M}_f}\left\{h(\mu) + \int\phi{d\mu}\right\}. 
\end{equation}
}
$h(\mu)$ is the {\em Kolmogorov-Sinai entropy} of an invariant Borel probability $\mu$ and supremum is taken over ${\mathcal M}_f$, 
the set of $f$-invariant Borel probabilities endowed with the weak topology. We call $P_{\mu}(\phi) := h(\mu) + \int\phi{d\mu}$ the {\em free energy or 
measure-theoretical pressure}. A Borel probability $\mu$ is called an {\em equilibrium state}  if $P_{\mu}(\phi)$ attains its maximum, i.e. 
$P(\phi) =  h(\mu) + \int\phi{d\mu}$. Variational principle (\ref{additive.variational.principle}) generalizes for topological pressure a similar 
variational property of the entropy due to Dinaburg. See \cite{manhe}. 

Existence and uniqueness of equilibrium states depends on properties of the dynamics and regularity of the potentials. These notions are well understood 
for Axiom A systems. Many important ergodic properties of uniformly hyperbolic dynamical systems such that the existence of Sinai-Bowen-Ruelle (SRB) 
measures, measures of maximal entropy, computation of rates of escape and dimension-like quantities of the dynamics are elaborated upon these notions. 
See \cite{bowen}.

The following is central result of the dimension theory of uniformly hyperbolic sets.
\ 
\\
\\
{\bf Bowen's equation} \ {\em Let $\Lambda$ be a compact $f$-invariant uniformly hyperbolic isolated and topologically mixing set of a $C^{1}$ conformal 
diffeomorphism $f$. Then 
$$
\dim_{\mathcal{H}}(W^s(x) \cap \Omega) = d^s \quad\text{and}\quad \dim_{\mathcal{H}}(W^u(x) \cap \Omega) = d^u,
$$
where $d^s$ (resp. $d^u$) is the unique solution to the \emph{Bowen equation} $P(f|\Lambda,-d^s\phi^s) = 0$ (resp. $P(f|\Lambda,-d^u\phi^u) = 0$). See \cite[Theorem 6.2.8]{barreira.2011}.
}
\ 
\\

Starting from Bowen's equation one may develop dimension theory of dynamical systems either beyond the conformal setting or studying nonuniformly 
hyperbolic sets which is the approch that we follow in this note. However, as long as $\Lambda$ is nonuniformly hyperbolic the stable and unstable 
potentials are just Borel measurable, hence the left hand side of (\ref{bowen.equation.1}) and (\ref{bowen.equation.2}) is not the topological pressure. 
This move us to develop thermodynamics formalism beyond the realm of uniformly hyperbolic dynamics. For this one need, as a first step, to introduce a 
new concept of pressure allowed to consider Borel measurable potentials. Moreover, new methods are needed to establish the existence and uniqueness of 
equilibrium states. See \cite{pesin}.

An earlier temptative to extend the notion of topological pressure was made by Falconer motivated by the application of thermodynamics to the study of 
fractal dimensions for non-conformal transformations. In his work \cite{falconer} he introduced a new notion of topological 
pressure $P({\mathcal F})$ for subadditive sequences ${\mathcal F} = \{\phi_n\}$ of continuous functions and an analog to Bowen's equation, proving a 
variational principle similar to (\ref{additive.variational.principle}) supposing some regularity properties of the sequence ${\mathcal F}$

We recall that a sequence ${\mathcal F} = \{\phi_n\}$ of continuous functions is {\em subadditive (resp. superadditive)} if $\phi_{n+m} \leq \phi_n + \phi_m \circ f^n$ 
(resp. $\phi_{n+m} \geq \phi_n + \phi_m \circ f^n$). 

Later, Barreira extended in \cite{barreira} previous results of Pesin and Pitskel \cite{pesin.pitskel} defining topological pressure as a 
dimensional-like quantity by Carath\'eodory's method. Lets recall the definitions.

Let $\mathcal{U}$ be a finite covering. We denote $\mathcal{W}_n(\mathcal{U})$ the set of sequences $U = (U_0 \cdots U_{n-1})$ of length $n > 0$ of open 
sets in $\mathcal{U}$. Given $U \in \mathcal{W}_n(\mathcal{U})$ we define $m(U) = n$ and $X(U) = \bigcap_{k=0}^{n-1}f^{-k}U_k$. A collection 
$\Gamma \subset \bigcup_{n \in \naturales}\mathcal{W}_n(\mathcal{U})$ covers $Z \subset M$ if $Z \subset \bigcup_{U \in \Gamma}X(U)$. Given a sequence 
$\mathcal{F} = \{\phi_n\}$ for each $n \in \naturales$ we define 
$$
\gamma_n(\mathcal{F}, \mathcal{U}) = \sup\{|\phi_n(x) - \phi_n(y)|: x,y \in X(U), \ \text{for some}\ U \in \mathcal{W}_n(\mathcal{U})\}
$$
We say that $\mathcal{F}$ has \emph{tempered variation} if
$$
\limsup_{\diam(\mathcal{U}) \to 0^+}\limsup_{n \to +\infty}\dfrac{\gamma_n(\mathcal{F}, \mathcal{U})}{n} = 0.
$$
For each $n \in \naturales$ and $U \in \mathcal{W}_n(\mathcal{U})$ we write
$$
\phi(U) = 
\begin{cases}
\sup_{x \in X(U)}\phi_n(x) & \text{if} X(U) \not= \emptyset\\
-\infty,                   & \text{otherwise}
\end{cases}
$$
Given $Z \subset M$ and $a \in \real$ we define
$$
M_{Z}(a,\mathcal{F}, \mathcal{U}) = \lim_{n \to +\infty}\inf_{\Gamma}\sum_{U \in \Gamma}\exp(-m(U) + \phi(U)),
$$
where infimum is taken over the set of coverings $\Gamma \subset \bigcup_{n \in \naturales}\mathcal{W}_n(\mathcal{U})$ of $Z$. We also define,
$$
\underline{M}_{Z}(a,\mathcal{F}, \mathcal{U}) = \liminf_{n \to +\infty}\inf_{\Gamma}\sum_{U \in \Gamma}\exp(-m(U) + \phi(U)),
$$
and
$$
\overline{M}_{Z}(a,\mathcal{F}, \mathcal{U}) = \limsup_{n \to +\infty}\inf_{\Gamma}\sum_{U \in \Gamma}\exp(-m(U) + \phi(U)).
$$
Then one prove that when $a$ goes from $-\infty$ to $+\infty$ the quantities so defined jump from $+\infty$ to $0$ at a unique value so we can define
\begin{eqnarray*}
P_Z(\mathcal{F},\mathcal{U}) = \inf\{a : M_{Z}(a,\mathcal{F}, \mathcal{U}) = 0 \ \}\\
\underline{P}_Z(\mathcal{F},\mathcal{U}) = \inf\{a : \underline{M}_{Z}(a,\mathcal{F}, \mathcal{U}) = 0 \ \} \\
\overline{P}_Z(\mathcal{F},\mathcal{U}) = \inf\{a : \overline{M}_{Z}(a,\mathcal{F}, \mathcal{U}) = 0 \ \}
\end{eqnarray*}
Then is proved that, if $\mathcal{F}$ has tempered variation then 
$$
P_Z(\mathcal{F}) = \lim_{\diam(\mathcal{U}) \to 0^+}P_Z(\mathcal{F},\mathcal{U})
$$
(resp. $\underline{P}_Z(\mathcal{F})$ and $\overline{P}_Z(\mathcal{F})$) is well-defined (\cite[Theorem 4.1.2]{barreira.2011}). If $\mathcal{F} = \{\sum_{k=0}^{n-1}\phi(f^k(x))\}$ and 
$\Lambda \subset M$ is a compact $f$-invariant subset then $P_{\Lambda}(\mathcal{F}) = P(f|\Lambda, \phi)$ (\cite[pp. 59]{barreira.2011}) and also contain 
as a particular case notion of topological pressure of a subadditive sequence introduced by Falconer in \cite{falconer}. 

A variational principle similar to (\ref{additive.variational.principle}) was established for this nonadditive pressure. Namely it is proved 
in \cite[Theorem 4.3.1]{barreira.2011} that for every continuous selfmap $f$ of a compact metric space $X$ and for every Borel measurable 
$f$-invariant set, if $\mathcal{F} = \{\phi_n\}$ is a sequence of continuous functions with tempered variation and if 
there exists a continuous function $\psi$ such that $\phi_{n+1} - \phi_n \circ f \to \psi$ converges uniformly then
$$
P_{\mathcal{L}(Z)}(\mathcal{F}) = \sup_{\mu \in \mathcal{M}_f(Z)}\{h(\mu) + \int\psi{d\mu}\},
$$
where $\mathcal{L}(Z) = \{x \in Z: 1/n\sum_{k=0}^{n-1}\delta_{f^k(x)} \ \text{has a subsequence convergent to some} \ \mu \in \mathcal{M}_f(Z)\}$.

The nonadditive thermodynamic formalism had been used succesfully to give useful estimates of the Hausdorff dimension and topological capacity of 
nonconformal compact $f$-invariant subsets. See for instance \cite[Chapter 5, Chapter 6]{barreira.2011} and \cite{barreira.gelfert} for a recent survey of applications from 
thermodynamics to the dimension theory of dynamical systems.

More recently Cao, Feng and Huang \cite{cao.feng.huang} provided a proof of a \emph{subadditive variational principle} by using the following

\begin{definition}
Let $f : X \to X$ a continuous selfmap of a complete metric space $(X,d)$ and let ${\mathcal F} = \{\phi_n\}$ be a sequence of continuous real functions. 
We define the {\em nonadditive topological pressure of ${\mathcal F}$} as
\begin{equation}\label{nonadditive.topological.pressure}
P({\mathcal F}) := \lim_{\epsilon \to 0^+}\lim_{n \to +\infty}\dfrac{1}{n}\log\left(\inf_E\left\{\sum_{x \in E}\exp{\phi_n(x)}\right\}\right),
\end{equation}
\emph{infimum taken over $(\epsilon,n)$-spanning subsets $E \subset M$}, where we recall that $E \subset X$ is {\em $(\epsilon,n)$-spanning set in $X$} 
if for every $x \in X$ there exists $y \in E$ such that $d(f^k(x),f^k(y)) \leq \epsilon$, for every $0 \leq k \leq n-1$. 
\end{definition}

If we let $\phi$ be continuous and define $S_n\phi(x) := \sum_{j=0}^{n-1}\phi(f^j(x))$, then $P(\{S_n\phi\}) = P(\phi)$ 
is the (additive) topological pressure. $P(\mathcal{F})$ definition is equivalent to Falconer's approach for a mixing repeller and it is equal 
to $P_{M}(\mathcal{F})$, the Barreira dimension-like definition of nonadditive topological pressure, under the additional assumption that 
${\mathcal F} = \{\phi_n\}$ has tempered variation. See \cite[Proposition 4.7]{cao.feng.huang} and \cite[Chapter 7]{barreira.2011}.
\ 
\\
\\
{\bf Subadditive variational principle} \ {\em Let $f : X \to X$ a continuous self map of a compact metric space $(X,d)$ and ${\mathcal F} = \{\phi_n\}$ a subadditive sequence of continuous functions. 
Suppose in addition that the rate of growing is uniformly bounded from below
$$
\Phi = \inf\limits_{n > 0}\dfrac{\phi_n}{n} > -\infty.
$$
Then,
\begin{equation}\label{subadditive.variational.principle}
P({\mathcal F}) = \sup\limits_{\mu \in {\mathcal M}_f}\left\{h(\mu) + \int\Phi{d\mu}\right\}.
\end{equation}
}
See \cite[Theorem 1]{cao.feng.huang}. A major virtue of the above result is that it don't require any additional assumptions on the regularity of the 
family $\mathcal{F} = \{\phi_n\}$ except to be subadditive.

\subsection{Statement of main results}

Motivated by the subadditive variational principle we define the \emph{variational pressure} of a Borel measurable potential $\Phi$,
\begin{equation}\label{variational.pressure}
P^*(\Phi) := \sup\left\{h(\mu) + \int\Phi{d\mu}: \mu \in {\mathcal M}_f\right\}.
\end{equation}
For $P^*(\Phi)$ to make sense it is necessary $\Phi$ to be integrable with respect to (w.r.t.) every $f$-invariant Borel probability 
$\mu \in {\mathcal M}_f$. For this we introduce the following class of admissible (possibly discontinuous) potentials $\Phi$.

\begin{definition}
We say that a Borel measurable real function $\Phi$ is an \emph{admissible potential} if either it is a continuous or if it is the rate of growing 
of a sub(super)additive sequence of continuous functions $\{\phi_n\}$. We require in addition that $\|\Phi\|_{\infty,\mu} < +\infty$ for every $f$-invariant 
Borel probability $\mu \in {\mathcal M}_f$, where $\|\Phi\|_{\infty,\mu}$ is the $\mu$-essential supremum. We denote by ${\mathcal S}^+$ the set of admissible 
potentials.
\end{definition}

It follows from Kingman's subadditive ergodic theorem (see \cite{ruelle.1979}) that every admissible potential is $\mu$-integrable, for every 
$f$-invariant Borel probability $\mu \in \mathcal{M}_f$ and therefore $P^*(\Phi)$ makes sense for every $\Phi \in \mathcal{S}^+$.

If $\Phi$ is the rate of growing of a subadditive sequence of continuous functions $\mathcal{F} = \{\phi_n\}$ then $P^*(\Phi) = P(\mathcal{F})$ is 
the subadditive topological pressure (\ref{nonadditive.topological.pressure}). On the other hand, if $\Phi$ is the rate of growing of a superadditive sequence $\{\phi_n\}$ then, as we shall prove
below,
$$
P^*(\Phi) = \sup_{n > 0}P\left(\dfrac{\phi_n}{n}\right).
$$
\begin{remark}
For the sake of completeness in the theory it would be desirable to extend the subadditive variational principle 
(\ref{subadditive.variational.principle}) for superadditive potentials $\mathcal{F} = \{\phi_n\}$. However we don't need that to prove the present 
results. The point is that the variational pressure of a superadditive potential is a well-defined topological invariant of the dynamics, which is the 
case by the above formula.
\end{remark}

Our idea is to extend the nonuniformly hyperbolic variational principle
\begin{equation}\label{nouniformily.hyperbolic.variational.principle.1}
P(\phi) = \sup_{\Omega \in \mathcal{H}}P(f|\Omega,\phi)
\end{equation}
for the variational pressure $P^*(\Phi)$ of an admissible potential $\Phi$, where supremum is taken over family of basic sets, that is, \emph{compact, 
$f$-invariant, locally maximal, topologically transitive, uniformly hyperbolic sets}. That is, we are looking for to give sufficient conditions on 
$\Phi$ or the dynamics for to have
\begin{equation}\label{nouniformily.hyperbolic.variational.principle.2}
P^*(\Phi) = \sup_{\Omega \in \mathcal{H}}P(f|\Omega,\Phi). 
\end{equation}

Variational equation (\ref{nouniformily.hyperbolic.variational.principle.1}) has been considered in the work of Barreira and Iommi 
\cite[Theorem 5]{barreira.iommi} for certain class of dynamical systems where hyperbolic measures are dense is ome sense. However, when every $f$-invariant ergodic 
Borel probability is hyperbolic, one may have continuous potentials for which $P(\phi) > \sup_{\Omega \in \mathcal{H}}P(f|\Omega,\phi)$. See 
\cite[Example 1.1]{sanchez-salas.2015} and \cite{barreira.iommi}.

To prove (\ref{nouniformily.hyperbolic.variational.principle.2}) we start showing that given an admissible potential $\Phi \in \mathcal{S}^+$ and a 
nonatomic hyperbolic $f$-invariant ergodic Borel probability $\mu$ there exists a sequence of basic sets $\Omega_n$ such that
\begin{equation}\label{approximating.free.energy}
P^*(f|\Omega_n,\phi) \to h(\mu) + \int\Phi{d\mu}. 
\end{equation}
We then introduce a class of continuous potentials $\Phi$ for which there exists a sequence of hyperbolic nonatomic measures $\mu_n$ with $h(\mu_n) > 0$ 
such that $h(\mu_n) + \int\Phi{d\mu_n} \to P(\Phi)$ and then use a 'diagonal' argument to get a sequence of basic sets $\Omega_n$ such that 
$P^*(f|\Omega_n,\Phi) \to P^*(\Phi)$.
\ 
\\
\\
{\bf Theorem B} \ {\em Let $f$ be a {\em regular nonuniformly hyperbolic diffeomorphism} of a compact Riemannian manifold, $\mu$ be a hyperbolic ergodic Borel 
probability with positive metrical entropy and $\Phi \in {\mathcal S}^+$ {\bf an admissible potential with tempered variation}. Then there exists a sequence 
of basic sets $\Omega_n$ and a constant $\chi > 0$ such that:
\begin{enumerate}
\item[a)] the rate of hyperbolicity of $\Omega_n$ is bounded from below by $\chi > 0$;
\item[b)] $\mu_n \to \mu$ {\bf for every sequence of ergodic measures} with $supp(\mu_n) \subseteq \Omega_n$;
\item[c)] ${P^*}(f|\Omega_n,\Phi) \to h(\mu) + \int\Phi{d\mu}$.
\end{enumerate}
}
Most of the paper will be dedicated to prove Theorem B.

Next definition introduce a class of admissible potentials which generalizes hyperbolic potentials used in \cite{sanchez-salas.2015} to extend the 
nonuniformly hyperbolic variational principle (\ref{nouniformily.hyperbolic.variational.principle.1}) to the present setting.

\begin{definition}
Let $\Phi \in {\mathcal S}^+$ be an admissible potential. We say that $\Phi$ is \emph{hyperbolic} if
\begin{equation}\label{hyperbolic.potential}
 P^*(\Phi) - \sup_{\mu \in \mathcal{M}_f}\int\Phi{d\mu} > 0.
\end{equation}
\end{definition}
From Theorem A and the definition of hyperbolic admissible potential we get the
\ 
\\
\\
{\bf Corollary C}
{\em Let $f$ be a {\em nonuniformly hyperbolic} $C^{1+\alpha}$ diffeomorphism of a compact Riemannian manifold and let $\Phi \in {\mathcal S}^+$  be an 
admissible potential with {\em tempered variation}. Suppose in addition that $\Phi$ is hyperbolic. Then 
\begin{equation}\label{main.thm.1}
P^*(\Phi) = \sup\limits_{\Omega \in {\mathcal H}} \ P^*(f | \Omega,\Phi),
\end{equation}
where ${\mathcal H}$ is the family of basic sets $\Omega \subset M$. 
}
\ 
\\
\\
We notice that, being $\Phi$ subadditive and of tempered variation then $P^*(\Phi)$ coincides with Barreira's dimension-like definition of 
nonadditive pressure and then Corollary can be used as a tool to extend for the case of nonuniformly hyperbolic sets some of the estimates 
of dimension of nonconformal uniformly hyperbolic sets exposed in \cite[Chapter 5, Chapter 6]{barreira.2011}.

\begin{proof}[Proof of Corollary C]
If $\Phi$ is hyperbolic and $\mu_n$ 
a sequence of hyperbolic measures such that
$$
P_{\mu_n}(\Phi) = h(\mu_n) + \int\Phi{d\mu_n} \to P^*(\Phi),
$$
then for all $n >> 1$ sufficiently large $h(\mu_n) > 0$. Indeed, taking $0 < \epsilon < P^*(\Phi) - \sup_{\mu \in \mathcal{M}_f}\int\Phi{d\mu}$ and then 
$N > 0$ such that 
$$
h(\mu_n) + \int\Phi{d\mu_n} > P^*(\Phi) - \epsilon > 0, \quad \forall \ n \geq N,
$$
then
$$
h(\mu_n) > P^*(\Phi) - \int\Phi{d\mu_n} - \epsilon \geq P^*(\Phi) - \sup_{\mu \in \mathcal{M}_f}\int\Phi{d\mu} - \epsilon > 0, \quad \forall \ n \geq N.
$$
Therefore, by Theorem A, for every such $\mu_n$ there exists a sequence $\Omega^n_m$ of basic sets such that
$$
P^*(f | \Omega^n_m,\Phi) \to h(\mu_n) + \int\Phi{d\mu_n}.
$$
Then, taking a suitable diagonal sequence $\Omega_n = \Omega^n_{m_n}$ we get that
$$
P^*(f | \Omega_n,\Phi) \to P^*(\Phi),
$$
concluding that $P^*(\Phi) = \sup_{\Omega \in \mathcal{H}}P^*(f|\Omega,\Phi)$, so proving Corollary B. 
\end{proof}

However, to prove Theorem A we need to consider admissible potentials which have not tempered variation. In order to do this we introduce a class of 
nonuniformly hyperbolic systems where basic sets are dense.

\begin{definition}
Let $f$ be a nonuniformly hyperbolic $C^{1+\alpha}$ diffeomorphism. We say that $f$ admits a \emph{hyperbolic exhaustion} if there exists an 
increasing sequence of basic sets $\Omega_n \subset M$ such that:
\begin{equation}\label{regular.nonuniformly.hyperbolic}
 M = \overline{\bigcup_n\Omega_n}
\end{equation}
\end{definition}
Notice that if $f$ admits a hyperbolic exhaustion then it does not have isolated hyperbolic periodic orbits. It follows easily from \cite{luzzatto.sanchez} 
that if $f$ is nonuniformly hyperbolic and there exists a nonatomic ergodic $f$-invariant measure such that $M = supp \  \mu$, then $f$ admits a hyperbolic 
exhaustion. If $f$ is nonuniformly hyperbolic and $M$ admits a hyperbolic exhaustion then one easily prove that 
(\ref{nouniformily.hyperbolic.variational.principle.1}) holds for every continuous potential. This permits to prove the following
\ 
\\
\\
{\bf Theorem D}
{\em Let $f$ be a nonuniformly hyperbolic $C^{1+\alpha}$ diffeomorphism admitting a hyperbolic exhaustion and let $\Phi \in {\mathcal S}^+$ be the rate of 
growing of a superadditive sequence $\{\phi_n\}$ then: 
\begin{equation}\label{main.thm.3}
P^*(\Phi) = \sup\limits_{\Omega \in {\mathcal H}}\,P^*(f|\Omega,\Phi).
\end{equation} 
}
\ 
\\
Notice that we do not require $\Phi$ to be of tempered variation neither do we approximate the measure-theoretical pressure 
$P^*_{\mu}(\Phi)$ by basic sets as we did in Theorem B.

\section{Proof of main results}

\begin{proof}[Proof of Theorem A using Theorem D]
We take the case of the unstable dimension, since the stable is similar. Let 
$$
\Phi^u(x) = \lim_{n \to +\infty}-\dfrac{1}{n}\log|\bigwedge^{\dim(M)}Df^{n}(x)| = \sup_{n > 0}-\dfrac{1}{n}\log|\bigwedge^{\dim(M)}Df^{n}(x)|.
$$
For the stable dimension we take
$$
\Phi^s(x) = \lim_{n \to +\infty}-\dfrac{1}{n}\log|\bigwedge^{\dim(M)}Df^{-n}(x)| = \sup_{n > 0}-\dfrac{1}{n}\log|\bigwedge^{\dim(M)}Df^{-n}(x)|.
$$
Then $\Phi^u$ (resp. $\Phi^s$) is an admissible superadditive potential since $\phi_n = -\log|\bigwedge^{m}Df^{n}(x)|$ is a superadditive sequence of 
continuous functions. Moreover, 
$$
\int\Phi^u{d\mu} = \int\log{J^uf}{d\mu}
$$
for every $f$-invariant Borel probability $\mu \in \mathcal{M}_f$, by the Oseledec theorem, where $J^uf(x) = |\det(Df|E^u(x))|$. Similarly so
$$
\int\Phi^s{d\mu} = \int\log{J^s}{d\mu}
$$
Then,
$$
P^*(f|\Lambda,-d\Phi^u) = P^*(f|\Lambda,-d\log{J^u}) \quad\text{(resp.}\quad P^*(f|\Lambda,-d\Phi^s) = P^*(f|\Lambda,-d\log{J^s}). 
$$
Then, by Theorem D,
$$
P^*(f|\Lambda,-d^u\log{J^u}) = \sup\limits_{\substack{\Omega \in \mathcal{H} \\ \Omega \subset \Lambda}}P^*(f|\Omega,-d\log{J^u}) = 0.
$$
This implies that there exists a sequence of hyperbolic basic sets $\Lambda_n \subset \Lambda$ such that
$$
P^*(f|\Lambda_n,-d^u\log{J^u}) \to 0
$$
and similarly for $P^*(f|\Lambda_n,-d^s\log{J^s}) = 0$.

Notice that we may suppose that $\Lambda_n \subset \Lambda_{n+1}$. Now, we use the following

\begin{lemma}
Let $\Lambda$ be a basic set for a $C^{1+\alpha}$ conformal diffeomorphism of a compact manifold $M$. Then
\begin{equation}\label{hausdorff.dimension}
 \dH(\Lambda \cap W^u(x)) = d\dim(E^u)
\end{equation}
where $0 < d \leq 1$ is the unique solution to the equation
\begin{equation}\label{bowen.equation.3}
P(f|\Lambda, -d\log(J^uf)) = 0.
\end{equation}
\end{lemma}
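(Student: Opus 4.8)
The plan is to reduce equation~(\ref{bowen.equation.3}) to the version of Bowen's equation recalled above, which is stated for conformal, isolated, \emph{topologically mixing} sets; the only genuine gap is that a basic set is merely topologically transitive, and this will be bridged by Smale's spectral decomposition. \textbf{Step 1 (conformal reduction and the root $d$).} Since $f$ is conformal, $Df|E^u(x) = a(x)I^u_x$ with $I^u_x$ an isometry, so $\log J^uf(x) = \dim(E^u)\log\|Df|E^u(x)\|$, and the left side of~(\ref{bowen.equation.3}) equals $P(f|\Lambda,\,-d\dim(E^u)\log\|Df|E^u\|)$. As a function of $d$ this is convex, hence continuous, and strictly decreasing, because $\log\|Df|E^u\| \geq \lambda > 0$ uniformly on the hyperbolic set $\Lambda$; it equals $h_{\mathrm{top}}(f|\Lambda) > 0$ at $d = 0$ (here, as in the application, $\Lambda$ is a nontrivial basic set) and tends to $-\infty$ as $d \to +\infty$, so~(\ref{bowen.equation.3}) has a unique solution $d$, with $d > 0$. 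That $d \leq 1$ follows from the Ruelle inequality $h(\nu) \leq \int \log J^uf\,d\nu$ for every $\nu \in \mathcal{M}_f(\Lambda)$, which gives $P(f|\Lambda,\,-\log J^uf) \leq 0$, and monotonicity then forces $d \leq 1$.

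\textbf{Step 2 (spectral decomposition and the mixing case).} Write $\Lambda = \Lambda_0 \sqcup \cdots \sqcup \Lambda_{k-1}$ with the $\Lambda_i$ compact, pairwise disjoint, cyclically permuted by $f$, each $\Lambda_i$ isolated and $g$-invariant with $g|\Lambda_i$ topologically mixing, where $g := f^k$. Fix $x \in \Lambda_0$. Since $\Lambda$ is uniformly hyperbolic, $W^u(x)$ is also the unstable manifold of $x$ for $g$, and if $y \in W^u(x)\cap\Lambda$ then $\dist(f^{-n}x,f^{-n}y) \to 0$ forces $f^{-n}y$ into the same spectral piece as $f^{-n}x$ for all large $n$, so $W^u(x)\cap\Lambda = W^u(x)\cap\Lambda_0$. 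As $g$ is again $C^{1+\alpha}$ and conformal and $\Lambda_0$ is isolated and topologically mixing for $g$, the cited Bowen equation, applied to $g$ and $\Lambda_0$, gives $\dH(W^u(x)\cap\Lambda_0) = d_0\dim(E^u)$, where $d_0$ is the unique solution of $P(g|\Lambda_0,\,-d_0\log J^ug) = 0$.

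\textbf{Step 3 (matching the two roots and conclusion).} For any continuous $\psi$ one has the iterate identity $P\big(g|\Lambda_0,\,\sum_{j=0}^{k-1}\psi\circ f^j\big) = k\,P(f|\Lambda,\psi)$, since $g$-invariant probabilities on $\Lambda_0$ correspond to $f$-invariant probabilities on $\Lambda$ carrying $k$ times the entropy. Applying this with $\psi = -d_0\log J^uf$ and using $\log J^ug = \sum_{j=0}^{k-1}(\log J^uf)\circ f^j$ gives $0 = P(g|\Lambda_0,\,-d_0\log J^ug) = k\,P(f|\Lambda,\,-d_0\log J^uf)$, so $d_0 = d$ by the uniqueness from Step 1. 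Combining with Step 2, $\dH(W^u(x)\cap\Lambda) = \dH(W^u(x)\cap\Lambda_0) = d\dim(E^u)$, which is~(\ref{hausdorff.dimension}).

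\textbf{Expected main obstacle.} Steps 1 and 3 are routine manipulations; the substantive point is Step 2, i.e.\ the transitive-to-mixing reduction: invoking the spectral decomposition, verifying that $W^u(x)\cap\Lambda$ is exactly the unstable slice of a \emph{single} mixing piece seen as a $g$-invariant set (so that the cited mixing Bowen equation can be applied to $g$ verbatim), and setting up the iterate pressure identity that makes the two Bowen roots coincide.
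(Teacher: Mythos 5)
Your proposal is correct, and its Step 1 is in essence the paper's entire proof: the paper's argument for this lemma consists solely of the conformal identity $\log J^uf = \dim(E^u)\log\|Df\|$ (written there, with a typo, as $J^uf = \dim(E^u)\|Df\|$ instead of $|a(x)|^{\dim(E^u)}$), after which the uniformly hyperbolic Bowen equation quoted in the introduction is invoked with the dimension rescaled by $\dim(E^u)$. Where you go further is in Steps 2 and 3: the quoted Bowen equation is stated for topologically \emph{mixing} isolated sets, whereas a basic set is only assumed transitive, and the paper passes over this discrepancy in silence, as it also does over existence, uniqueness and the bound $0 < d \leq 1$ for the root of $P(f|\Lambda,-d\log J^uf)=0$. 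Your spectral-decomposition bridge (passing to $g=f^k$ on a mixing piece $\Lambda_0$, checking $W^u(x)\cap\Lambda = W^u(x)\cap\Lambda_0$ for $x\in\Lambda_0$, and matching the two Bowen roots via $P\bigl(g|\Lambda_0,\sum_{j=0}^{k-1}\psi\circ f^j\bigr)=k\,P(f|\Lambda,\psi)$ together with $\log J^ug=\sum_{j=0}^{k-1}(\log J^uf)\circ f^j$) and your Ruelle-inequality argument for $d\leq 1$ are correct and supply exactly what the paper leaves implicit; note only that strict positivity $d>0$ requires $h_{\mathrm{top}}(f|\Lambda)>0$, i.e.\ nontriviality of the basic set, as you observe and as holds in the paper's application, where the approximating basic sets carry measures of positive entropy.
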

\begin{proof}
Let $Df(x) = a(x)I_x$ where $I_x: T_xM \to T_f(x)M$ is an isometry. Then, 
$$
J^uf(x) = |\det(Df|E^u(x))| = \dim(E^u)|a(x)| = \dim(E^u)\|Df(x)\|.
$$ 
\end{proof}

By the convexity of $P^*(f|\Lambda_n,-t\log{J^u})$ for every $\Lambda_n$ and 
for $t \in \real$, we get that $d^u_n \uparrow d^u$, where $d^u_n$ is the unique solution to Bowen's equation for $\Lambda_n$, 
$P^*(f|\Lambda_n,-d^u_n\log{J^u}) = 0$. Therefore, $\dH(\Lambda_n \cap W^u(x)) \uparrow d^u\dim(E^u)$. But then as 
$\bigcup_n\Lambda_n \subset \Lambda$ we get that
$$
\dH(\Lambda \cap W^u(x)) \geq \sup_n\dH(\Lambda_n \cap W^u(x)) = d^u\dim(E^u).
$$
\end{proof}

\begin{proof}[Proof of Theorem D]
Observe that for every continuous potential $\phi$ and for every ergodic nonatomic hyperbolic measure $\mu$, 
there exists a sequence of basic sets $\Omega_n$ such that
$$
P(f|\Omega_n,\phi) \to h(\mu) + \int\phi{d\mu}.
$$
This is \cite[Theorem A]{sanchez-salas.2015} or, if you prefer, a direct consequence of Theorem A stated above, since continuous functions are 
admissible and $P^*(\Phi) = P(\phi)$, when $\Phi = \phi$.

\begin{lemma}
Let $\phi$ be continuous and $f : X \to X$ be a continuous selfmap of a compact metric space. Then $\Omega  \mapsto P(f|\Omega,\phi)$ is continuous when 
$\Omega$ varies on the family of compact $f$-invariant subsets with $P(f|\Omega,\phi) < +\infty$.
\end{lemma}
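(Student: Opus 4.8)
The plan is to pass to the variational expression $P(f|\Omega,\phi)=\sup\bigl\{h(\nu)+\int\phi\,d\nu:\nu\in\mathcal M_f(\Omega)\bigr\}$, where $\mathcal M_f(\Omega)$ denotes the nonempty, weak$^*$-compact set of $f$-invariant Borel probabilities carried by $\Omega$; this is legitimate because $\phi$ is continuous, by the variational principle (\ref{additive.variational.principle}) applied to $f|\Omega$. The elementary ingredient is monotonicity in $\Omega$: if $\Omega_1\subseteq\Omega_2$ are compact $f$-invariant sets then $\mathcal M_f(\Omega_1)\subseteq\mathcal M_f(\Omega_2)$, hence $P(f|\Omega_1,\phi)\le P(f|\Omega_2,\phi)$; in particular $P(f|\Omega_n,\phi)$ is monotone, hence convergent, along a nested sequence.

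The first half of continuity --- the upper estimate --- I would get as follows. If $\Omega_n\to\Omega$ in the Hausdorff metric, then for each $\delta>0$ and all $n$ large $\Omega_n$ lies in the $\delta$-neighbourhood $U_\delta$ of $\Omega$, and being $f$-invariant it then lies in the maximal $f$-invariant subset $\Lambda_\delta$ of $\overline{U_\delta}$; monotonicity gives $P(f|\Omega_n,\phi)\le P(f|\Lambda_\delta,\phi)$ for all large $n$, and letting $\delta\downarrow 0$ yields $\limsup_n P(f|\Omega_n,\phi)\le\inf_{\delta>0}P(f|\Lambda_\delta,\phi)$. When $\Omega$ is locally maximal --- which is the situation in which the lemma is applied, the sets in question being basic sets --- one has $\Lambda_\delta=\Omega$ for all small $\delta$, so $\Omega_n\subseteq\Omega$ eventually and $\limsup_n P(f|\Omega_n,\phi)\le P(f|\Omega,\phi)$; in general this reduces the upper semicontinuity to the identity $\inf_{\delta>0}P(f|\Lambda_\delta,\phi)=P(f|\Omega,\phi)$.

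The second half --- the lower estimate $P(f|\Omega,\phi)\le\liminf_n P(f|\Omega_n,\phi)$ --- is where the work is. One must recover, in the limit, the free energy $h(\nu)+\int\phi\,d\nu$ of a near-equilibrium ergodic measure $\nu\in\mathcal M_f(\Omega)$ from measures carried by $\Omega_n$, and simply transporting $\nu$ onto $\Omega_n$ along a measurable nearest-point map will not work, since $\nu\mapsto h(\nu)$ is not lower semicontinuous for the weak$^*$ topology. Instead I would exploit that in the applications the ambient system is (nonuniformly) hyperbolic, so that every such $\nu$ is hyperbolic and, by the consequence of Theorem~A recorded at the beginning of the proof of Theorem~D, can be approximated simultaneously in entropy and in the value of $\int\phi\,d(\cdot)$ by basic sets; a horseshoe construction in the spirit of Katok, carried out in a small neighbourhood of $\operatorname{supp}\nu$ and combined with the fact that the Hausdorff distance from $\Omega_n$ to $\Omega$ tends to $0$ together with the local maximality of $\Omega_n$, then places these basic sets inside $\Omega_n$ for all $n$ large, so that $h(\nu)+\int\phi\,d\nu\le\liminf_n P(f|\Omega_n,\phi)$; taking the supremum over $\nu$ gives the desired inequality. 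The step I expect to be the real obstacle is exactly this one --- manufacturing the approximating horseshoes with enough uniformity that they sit inside $\Omega_n$ once $\Omega_n$ is Hausdorff-close to $\Omega$ --- whereas the reduction to the variational principle, the monotonicity, and the upper estimate are routine.
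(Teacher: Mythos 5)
The paper does not actually prove this lemma: it is quoted verbatim from \cite[Lemma 1.3]{sanchez-salas.2015}, so there is no internal argument to match your proposal against, and your attempt has to stand on its own. As written it does not. Both halves stop exactly at the critical points, and you say so yourself. For the upper estimate you reduce matters to the identity $\inf_{\delta>0}P(f|\Lambda_\delta,\phi)=P(f|\Omega,\phi)$, which you leave unproved and which is false for a general continuous map: pressure (already entropy) is not upper semicontinuous along decreasing families of compact invariant sets. For instance, if horseshoes $H_m$ with $h_{top}(f|H_m)\geq c>0$ accumulate on a fixed point $p$, then $\Lambda_n=\{p\}\cup\bigcup_{m\geq n}H_m$ is a decreasing sequence of compact invariant sets with $\bigcap_n\Lambda_n=\{p\}$, yet $P(f|\Lambda_n,0)\geq c$ while $P(f|\{p\},0)=0$. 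Your fallback, local maximality of $\Omega$, is an additional hypothesis that is not in the statement.

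The lower estimate is in worse shape. You invoke hyperbolicity of the ambient system, Katok-type horseshoes approximating a near-equilibrium measure, and local maximality of the sets $\Omega_n$ — none of which appear in the lemma, which concerns an arbitrary continuous selfmap of a compact metric space. This is not a removable convenience, because in that generality lower semicontinuity in the Hausdorff metric is simply false: in the full $2$-shift, let $\Omega_n$ be the periodic orbit of a point whose repeating block enumerates all words of length $n$; then $\Omega_n\to X$ in the Hausdorff metric, but $P(\sigma|\Omega_n,0)=0$ for all $n$ while $P(\sigma|X,0)=\log 2$. So any correct proof must either use extra structure or interpret the continuity differently (for example along the monotone families $\Omega_n\subseteq\Omega$ for which the paper actually uses the lemma), and your proposal never pins this down. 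Moreover, even granting the hyperbolic reading, the decisive step — that the approximating horseshoes for $\nu\in\mathcal M_f(\Omega)$ can be placed inside $\Omega_n$ once $\Omega_n$ is Hausdorff-close to $\Omega$ — is asserted, not proved: local maximality of $\Omega_n$ only makes $\Omega_n$ maximal in some isolating neighborhood of itself, whose size is not uniform in $n$ and which need not contain a horseshoe built near $\operatorname{supp}\nu\subset\Omega$. You flag this as "the real obstacle", and it is; with it and the upper-bound identity both missing, the proposal is an outline of a different (hyperbolic, locally maximal) statement rather than a proof of the lemma.
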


See \cite[Lemma 1.3]{sanchez-salas.2015}. Then, as $M$ admits a hyperbolic exhaustion then, for every continuous potential $\phi$,
$$
P(\phi) = \sup_{\Omega \in \mathcal{H}}P(f|\Omega,\phi),
$$
since there exists a sequence of basic sets $\Omega_n \uparrow M$ and therefore $P(f|\Omega_n,\phi) \to P(\phi)$, by continuity. See 
\cite[Proposition 1.2]{sanchez-salas.2015.2}. Now, we use the following

\begin{proposition}\label{limit.pressure.lemma}
Let $\{\phi_n\}$ be a sub(super)additive sequence of continuous functions, $\Phi \in {\mathcal S}^+$ its rate of growing. Then,
\begin{equation}\label{limit.pressure.sub}
P^*(\Phi) = \lim\limits_{n \to +\infty}P\left(\dfrac{\phi_n}{n}\right).
\end{equation}
Moreover, if $\{\phi_n\}$ is subadditive (resp. superadditive) then,
$$
P^*(\Phi) = \inf\limits_{n > 0}P\left(\dfrac{\phi_n}{n}\right) \quad\text{(resp. '$\sup$')}
$$
\end{proposition}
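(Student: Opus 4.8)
The plan is to pass to measure-theoretic quantities and then treat the superadditive and subadditive cases separately, as they behave quite differently. The two standing tools are the Walters variational principle, which since each $\phi_n/n$ is continuous gives $P(\phi_n/n)=\sup_{\mu\in\mathcal{M}_f}\{h(\mu)+\tfrac1n\int\phi_n\,d\mu\}$, and Kingman's subadditive ergodic theorem (and its superadditive counterpart): for $\mu\in\mathcal{M}_f$ the real sequence $\{\int\phi_n\,d\mu\}$ is subadditive (resp. superadditive) by $f$-invariance of $\mu$, so admissibility of $\Phi$ gives $\tfrac1n\int\phi_n\,d\mu\to\int\Phi\,d\mu$, equal to $\inf_n\tfrac1n\int\phi_n\,d\mu$ in the subadditive case and to $\sup_n\tfrac1n\int\phi_n\,d\mu$ in the superadditive case. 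I would record these first. In the superadditive case this already gives $h(\mu)+\int\Phi\,d\mu=\sup_{n}\{h(\mu)+\tfrac1n\int\phi_n\,d\mu\}$, and interchanging the two suprema yields
\[
P^*(\Phi)=\sup_{\mu}\sup_{n>0}\Big\{h(\mu)+\tfrac1n\int\phi_n\,d\mu\Big\}=\sup_{n>0}P(\phi_n/n);
\]
in the subadditive case only $P^*(\Phi)\le\inf_n P(\phi_n/n)\le\liminf_n P(\phi_n/n)$ is free, by the elementary $\sup\inf\le\inf\sup$.

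For the superadditive case it then remains to upgrade the supremum to a limit. Since $\limsup_n P(\phi_n/n)\le\sup_n P(\phi_n/n)=P^*(\Phi)$ trivially, I would bound $\liminf_n P(\phi_n/n)$ below by $P(\phi_m/m)$ for each fixed $m$: iterating superadditivity, $\phi_n\ge\sum_{j=0}^{q-1}\phi_m\circ f^{jm}+\phi_r\circ f^{qm}$ for $n=qm+r$ with $q=\lfloor n/m\rfloor$ and $0\le r<m$; integrating against any $f$-invariant $\mu$ and using invariance gives $h(\mu)+\tfrac1n\int\phi_n\,d\mu\ge h(\mu)+\tfrac qn\int\phi_m\,d\mu-\tfrac1n\max_{0\le r<m}\|\phi_r\|_\infty$, so taking suprema over $\mu$, $P(\phi_n/n)\ge P\big(\tfrac qn\,\phi_m\big)-\tfrac1n\max_{0\le r<m}\|\phi_r\|_\infty$. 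Since $\tfrac qn\,\phi_m\to\tfrac1m\,\phi_m$ uniformly and the topological pressure is $1$-Lipschitz in the uniform norm, letting $n\to\infty$ gives $\liminf_n P(\phi_n/n)\ge P(\phi_m/m)$, whence $\liminf_n P(\phi_n/n)\ge\sup_m P(\phi_m/m)=P^*(\Phi)$ and $\lim_n P(\phi_n/n)=\sup_n P(\phi_n/n)=P^*(\Phi)$.

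For the subadditive case it remains to prove $\limsup_n P(\phi_n/n)\le P^*(\Phi)$. I would choose a subsequence $(n_k)$ realizing this $\limsup$, measures $\mu_{n_k}\in\mathcal{M}_f$ with $h(\mu_{n_k})+\tfrac1{n_k}\int\phi_{n_k}\,d\mu_{n_k}\ge P(\phi_{n_k}/n_k)-1/n_k$, and, refining, $\mu_{n_k}\to\mu$ weak-$*$. Iterating subadditivity, $\phi_n\le\sum_{j=0}^{q-1}\phi_m\circ f^{jm}+\phi_r\circ f^{qm}$ for $n=qm+r$, so integrating against the invariant $\mu_n$ gives $\tfrac1n\int\phi_n\,d\mu_n\le\tfrac qn\int\phi_m\,d\mu_n+\tfrac1n\max_{0\le r<m}\|\phi_r\|_\infty$; letting $k\to\infty$ (so $q/n_k\to1/m$) and using continuity of $\phi_m$ together with $\mu_{n_k}\to\mu$, we get $\limsup_k\tfrac1{n_k}\int\phi_{n_k}\,d\mu_{n_k}\le\tfrac1m\int\phi_m\,d\mu$ for every $m$, hence $\le\inf_m\tfrac1m\int\phi_m\,d\mu=\int\Phi\,d\mu$. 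Combined with $\limsup_k h(\mu_{n_k})\le h(\mu)$ this yields $\limsup_n P(\phi_n/n)=\lim_k\big(h(\mu_{n_k})+\tfrac1{n_k}\int\phi_{n_k}\,d\mu_{n_k}\big)\le h(\mu)+\int\Phi\,d\mu\le P^*(\Phi)$, so $\lim_n P(\phi_n/n)=\inf_n P(\phi_n/n)=P^*(\Phi)$.

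The one delicate step is $\limsup_k h(\mu_{n_k})\le h(\mu)$ in the subadditive case: this is exactly upper semicontinuity of the Kolmogorov--Sinai entropy map on $\mathcal{M}_f$, which holds in the present setting because $f$ is a $C^{1+\alpha}$ diffeomorphism of a compact manifold and hence (asymptotically) $h$-expansive; I would invoke it there, noting that all the remaining estimates use nothing beyond sub/superadditivity of $\{\phi_n\}$. Alternatively one may bypass this in the subadditive case by quoting the Cao--Feng--Huang subadditive variational principle, which identifies $P^*(\Phi)$ with the nonadditive topological pressure $P(\mathcal{F})$, the comparison with the $P(\phi_n/n)$ being contained in its proof. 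The superadditive case --- the one actually used in Theorems~A and~D --- is entirely elementary and needs none of this.
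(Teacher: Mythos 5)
Your superadditive half is correct and is essentially the paper's argument: by Kingman $\int\Phi\,d\mu=\sup_n\frac1n\int\phi_n\,d\mu$, the two suprema are interchanged to get $P^*(\Phi)=\sup_nP(\phi_n/n)$, and the elementary iteration of superadditivity (the paper's Lemma \ref{lemma.1.1}) upgrades the supremum to a limit. Likewise the easy inequality $P^*(\Phi)\le\inf_nP(\phi_n/n)$ in the subadditive case matches the paper.

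The gap is in the remaining subadditive inequality $\limsup_nP(\phi_n/n)\le P^*(\Phi)$. Your argument takes near-maximizing measures $\mu_{n_k}$, a weak-$*$ limit $\mu$, and then needs $\limsup_kh(\mu_{n_k})\le h(\mu)$, i.e.\ upper semicontinuity of the entropy map; you justify this by saying a $C^{1+\alpha}$ diffeomorphism is (asymptotically) $h$-expansive. That is not true: asymptotic $h$-expansiveness and upper semicontinuity of $\mu\mapsto h(\mu)$ are guaranteed only for $C^\infty$ maps (Buzzi, Newhouse), and for finite smoothness (in particular $C^{1+\alpha}$) there are classical counterexamples of Misiurewicz where the entropy map is not upper semicontinuous. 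Moreover Proposition \ref{limit.pressure.lemma} is stated, proved and used in the paper for a continuous selfmap of a compact metric space, and the paper explicitly remarks that the statement under the additional hypothesis of upper semicontinuous entropy is already \cite[Theorem 7.3.1]{barreira.2011}; the whole point of the paper's proof is to remove that hypothesis. It does so by a different mechanism: the exhausting families ${\mathcal M}_N$ of invariant measures (\ref{definition.M_N}) on which $\int\frac{\phi_n}{n}\,d\nu$ is $\epsilon$-close to $\int\Phi\,d\nu$ uniformly for $n\ge N$, the truncated pressures $P_N$ of (\ref{definition.P_N}), and an interchange of $\sup_{N}$ and $\inf_{n}$ justified by a Cauchy-in-$(n,N)$ estimate (Lemmas \ref{lemma.1.3}--\ref{lemma.1.5}). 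Your fallback of ``quoting Cao--Feng--Huang'' does not close the gap either: their theorem identifies $P({\mathcal F})$ with $P^*(\Phi)$, but neither its statement nor its Misiurewicz-type proof yields the comparison $\inf_nP(\phi_n/n)\le P({\mathcal F})$, and that comparison is exactly what is at stake here. (In the places where the paper actually applies the subadditive case --- pressures of $f|\Omega$ on basic sets, which are expansive --- entropy is upper semicontinuous and your argument would go through, but as a proof of the Proposition as stated it is incomplete.)
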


We prove proposition \ref{proof.limit.pressure.lemma} in section \ref{proof.limit.pressure.lemma}. We notice that this had been proved in 
\cite[Theorem 7.3.1]{barreira.2011} under the additional assumption that $\mu \mapsto h(\mu)$ is uppersemicontinuous.

Then given a superadditive sequence of continuous functions $\mathcal{F} = \{\phi_n\}$,
$$
P\left(\dfrac{\phi_n}{n}\right) = \sup_{\Omega \in \mathcal{H}}P\left(f|\Omega,\dfrac{\phi_n}{n}\right), \quad \ \forall \ n > 0.
$$
Then, taking supremun on $n > 0$ at both sides of the equation, we get
$$
P^*(\Phi) = \sup_{\Omega \in \mathcal{H}}P^*(f|\Omega, \Phi), 
$$
by proposition \ref{limit.pressure.lemma}. This conclude the proof of Theorem C. 
\end{proof}

\section{The proof of Theorem B}\label{sec:proof.main.technical.lemma}

Let $\rho > 0$ a small positive number and $s > 0$ an integer, $\{\psi_i\} \subset C(M)$ a countable dense subset of continuous 
functions and $\mu$ an ergodic non atomic hyperbolic Borel probability. Let $\{\phi_n\}$ be a sub(super)additive sequence of continuous functions such that
there exists $L > 0$ such that
\begin{equation}\label{bounds.subadditive.potential.1}
\dfrac{|\phi_n(x)|}{n} \leq L, \quad\forall \ x \in M \quad\text{and}\quad \forall \ n > 0.
\end{equation}
By Kingman's theorem, there exists a measurable $\Phi = \Phi(x)$ such that
\begin{equation}\label{definition.Phi}
\Phi(x) = \inf_{n > 0}\dfrac{\phi_n(x)}{n} \quad (\text{resp.}\Phi(x) = \sup_{n > 0}\dfrac{\phi_n(x)}{n})
\end{equation}
for $\mu$-a.e. for every $\mu \in {\mathcal M}_f$.
We suppose in addition that $\{\phi_n\}$ has \emph{tempered variation}.
\begin{definition}
Let $x$ be an Oseledec regular point. We recall that $x$ is hyperbolic if all the Lyapunov exponents at $x$ are non zero. The {\em rate of hyperbolicity of a hyperbolic regular point $x$} is defined 
as $\chi(x) := \min\{|\chi_i(x)|\}$, where $-\infty < \chi_1(x) < \cdots < \chi_k(x) < +\infty$ is the spectrum of Lyapunov exponents of $x$. See \cite{barreira.pesin}. 
We define the {\em rate of hyperbolicity of an $f$-invariant set $\Omega$} as 
$$
\chi(\Omega) := \inf\limits_{x \in \Omega}|\chi(x)|
$$
and the {\em rate of hyperbolicity of a measure $\mu$} as the infimum of $\chi(\Lambda)$ taken over the family of compact $f$-invariant 
subsets $\Lambda$ with $\mu(\Lambda) > 0$.
\end{definition}

Then we have the following
\begin{proposition}\label{main.technical.lemma}
There exists a continuous function $\Phi_{\rho}$ such that
\begin{equation}\label{convergence.in.measure}
\Phi_{\rho} \to \Phi \quad\text{in measure as}\quad \rho \to 0^+
\end{equation}
and a hyperbolic basic set  
$$
\Omega = \Omega(\rho,s,\Phi_{\rho})
$$
with rate of hyperbolicity bounded from below by a constant $\chi > 0$ such that:
\begin{enumerate}
\item[a)] every ergodic measure $\nu$ supported on $\Omega$ belongs to the basic weak-* open neighborhood ${\mathcal O}(\rho,s)$
$$
{\mathcal O}(\rho, s) := \{\nu : \left|\int\psi_i{d{\mu}} - \int\psi_i{d{\nu}}\right| < \rho, \ i = 1, \cdots , s\};
$$
\item[b)] there exists a subsequence $\mathcal{M}_0 = \{m_k\}_{k > 0}$ such that
\begin{equation}\label{main.1}
\dfrac{P^*_{\mu}(\Phi)-o(1)}{1+\rho} \leq P\left(f|\Omega,\dfrac{\phi_m}{m}\right) \leq P^*_{\mu}(\Phi) + o(1), \quad\forall \ m \in \mathcal{M}_0,
\end{equation}
where $o(1)$ is a positive function such that $o(1) \to 0^+$ when $\rho \to 0^+$ and 
\begin{equation}\label{free.energy.1}
P^*_{\mu}(\Phi) := h(\mu) + \int\Phi{d\mu}.
\end{equation}
\end{enumerate} 
\end{proposition}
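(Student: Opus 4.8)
The plan is to approximate $\mu$, in the spirit of Katok and Mendoza, by a horseshoe sitting along one $\mu$-generic orbit, and then to transport the nonadditive cocycle $\{\phi_n\}$ from that orbit onto the horseshoe using, simultaneously, subadditivity, tempered variation and Kingman's theorem. First I fix the continuous approximant. Since $|\phi_n|/n\le L$ and, by Kingman's subadditive ergodic theorem, $\phi_n/n\to\Phi$ $\mu$-almost everywhere, dominated convergence gives $\phi_n/n\to\Phi$ in $L^1(\mu)$; so I pick $N=N(\rho)$ with $\|\phi_N/N-\Phi\|_{L^1(\mu)}<\rho$ and set $\Phi_\rho:=\phi_N/N$. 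Then $\Phi_\rho$ is continuous, $\Phi_\rho\to\Phi$ $\mu$-a.e.\ (hence in measure) as $\rho\to0^+$, which is (\ref{convergence.in.measure}), and $\left|\int\Phi_\rho\,d\mu-\int\Phi\,d\mu\right|<\rho$; this last estimate is the only quantitative information about $\Phi_\rho$ that will be used.

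Next I would choose an Oseledec-regular, $\mu$-generic point $x_0$ lying in a Pesin block $\Lambda_\ell$ of large $\mu$-measure, so that $\tfrac1nS_n\psi_i(x_0)\to\int\psi_i\,d\mu$ for $i\le s$, $\tfrac1nS_n\Phi_\rho(x_0)\to\int\Phi_\rho\,d\mu$, $\phi_n(x_0)/n\to\int\Phi\,d\mu$ (Kingman applied \emph{at} $x_0$), and the Lyapunov spectrum at $x_0$ is that of $\mu$. Fixing a small $\epsilon=\epsilon(\rho,s)$ and running the Katok--Mendoza shadowing construction along the orbit of $x_0$ (see \cite{katok.mendoza} and \cite{sanchez-salas.2015}), from the returns of the orbit of $x_0$ to a small ball inside $\Lambda_\ell$ one produces a basic set $\Omega=\Omega(\rho,s,\Phi_\rho)$ --- topologically mixing, locally maximal, uniformly hyperbolic --- lying in a small neighbourhood of a long orbit segment of $x_0$, with the following properties. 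Its hyperbolicity rate is bounded below by a constant $\chi>0$ depending only on $\mu$. Every point of $\Omega$ has its orbit $\epsilon$-shadowing pieces of the orbit of $x_0$, so every invariant --- in particular every ergodic --- $\nu$ carried by $\Omega$ lies in $\mathcal O(\rho,s)$; this is item (a). And $\Omega$ is modelled on a subshift built from between $e^{(h(\mu)-\epsilon)m}$ and $e^{(h(\mu)+\epsilon)m}$ ``loops'' (sub-segments of the orbit of $x_0$) whose lengths lie in $[m,(1+\rho)m]$, so that $\frac{h(\mu)-\epsilon}{1+\rho}\le h_{\mathrm{top}}(f|\Omega)\le h(\mu)+\epsilon$; the factor $1+\rho$, forced by the spread of the return times, is the source of the corresponding factor in (\ref{main.1}). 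The set $\mathcal M_0$ will consist of those large $m$ for which the transfer below is valid, and for such $m$ the construction is run at a scale comparable to $m$, so that every length-$m$ orbit segment in $\Omega$ shadows, at scale $\epsilon$, a length-$m$ orbit segment $\{f^tx_0,\dots,f^{t+m-1}x_0\}$ of $x_0$.

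It remains to prove (b). On the basic set $\Omega$ I use the ordinary variational principle, $P(f|\Omega,\phi_m/m)=\sup_{\nu\in\mathcal M_f(\Omega)}\{h(\nu)+\tfrac1m\int\phi_m\,d\nu\}$. The heart of the matter is that $\tfrac1m\phi_m$ is nearly constant, equal to $\int\Phi\,d\mu$, on $\Omega$ for $m\in\mathcal M_0$. Indeed, by subadditivity $\phi_m(f^tx_0)\ge\phi_{t+m}(x_0)-\phi_t(x_0)\ge m\int\Phi\,d\mu-o(m)$ for $t$ in the relevant range, using Kingman at $x_0$; and since every length-$m$ orbit segment in $\Omega$ $\epsilon$-shadows $\{f^tx_0,\dots,f^{t+m-1}x_0\}$, tempered variation gives $|\phi_m(y)-\phi_m(f^tx_0)|\le\gamma_m(\mathcal F,\mathcal U)=o(m)$ for a covering $\mathcal U$ of mesh comparable to $\epsilon$; hence $\tfrac1m\phi_m(y)\ge\int\Phi\,d\mu-o(1)$ for every $y\in\Omega$ (the reversed inequality in the superadditive case), so the same bound holds for $\tfrac1m\int\phi_m\,d\nu$, uniformly in $\nu$. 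The complementary one-sided bound is where $\Phi_\rho$ enters: in the subadditive case, subadditivity and invariance of $\nu$ yield $\tfrac1m\int\phi_m\,d\nu\le\tfrac1N\int\phi_N\,d\nu+o(1)=\int\Phi_\rho\,d\nu+o(1)$, and $\int\Phi_\rho\,d\nu$ is close to $\int\Phi_\rho\,d\mu\le\int\Phi\,d\mu+\rho$ because $\Phi_\rho$ is continuous and $\nu$ is carried by orbits shadowing the $\mu$-generic orbit of $x_0$ (symmetrically in the superadditive case). Feeding these bounds and the entropy estimates of the previous step into the variational principle gives $P(f|\Omega,\phi_m/m)\le(h(\mu)+\epsilon)+\int\Phi\,d\mu+o(1)=P^*_\mu(\Phi)+o(1)$, and, testing against the measure of maximal entropy $\nu_{\max}$ of $\Omega$, $P(f|\Omega,\phi_m/m)\ge\frac{h(\mu)-\epsilon}{1+\rho}+\int\Phi\,d\mu-o(1)\ge\frac{P^*_\mu(\Phi)-o(1)}{1+\rho}$, the last step because $\int\Phi\,d\mu\,\bigl(1-\tfrac1{1+\rho}\bigr)$ has absolute value at most $L\rho$ and is absorbed into $o(1)$. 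One finishes by choosing $\epsilon$, $\mathcal U$ and $N$ --- and then restricting $\mathcal M_0$ to sufficiently large $m$ --- so that every error term above is $o(1)$ as $\rho\to0^+$.

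The step I expect to be the genuine obstacle is the lower transfer in the subadditive case, i.e.\ passing from $\phi_m(x_0)/m\to\int\Phi\,d\mu$ to $\phi_m(y)/m\ge\int\Phi\,d\mu-o(1)$ for every $y\in\Omega$. Weak-$*$ proximity of $\nu$ to $\mu$ is useless here, because $\Phi=\inf_n\phi_n/n$ is merely upper semicontinuous and weak-$*$ convergence bounds $\int\Phi\,d\nu$ only from above; it is precisely the tempered-variation hypothesis that keeps $\gamma_m(\mathcal F,\mathcal U)$ sublinear, so that shadowing at scale $\epsilon$ transports the value of the nonadditive cocycle $\phi_m$ with a sublinear error --- which forces the construction to be run at a scale comparable to $m$ --- and one must be careful that the $m\in\mathcal M_0$ are large enough, relative to the fixed $N=N(\rho)$, to the mesh of $\mathcal U$, and to the thresholds in Kingman's theorem at $x_0$, for every $o(m)$ term to be truly negligible.
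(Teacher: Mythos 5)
Your choice $\Phi_\rho:=\phi_N/N$ is fine for (\ref{convergence.in.measure}), your item (a) and the upper half of (\ref{main.1}) are essentially sound, but the step you yourself flag --- the lower transfer of the cocycle onto $\Omega$ --- is a genuine gap, and your proposed mechanism does not close it. You argue $\phi_m(f^tx_0)\ge \phi_{t+m}(x_0)-\phi_t(x_0)\ge m\int\Phi\,d\mu-o(m)$ using Kingman at $x_0$; but Kingman only gives $\phi_t(x_0)=t\int\Phi\,d\mu+o(t)$, so the difference is bounded below by $m\int\Phi\,d\mu-o(t+m)$, and in a Katok--Mendoza construction the shadowed starting times $t$ run up to the length of the orbit segment needed to harvest $\sim e^{(h(\mu)-\epsilon)m}$ loops, i.e.\ $t\sim m\,e^{(h(\mu)-\epsilon)m}\gg m$; an $o(t)$ error is then not $o(m)$, and nothing prevents many individual increments $\phi_{t+m}(x_0)-\phi_t(x_0)$ from lying far below $m\int\Phi\,d\mu$. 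There is also a structural problem: one fixed $\Omega$ must serve the whole infinite set $\mathcal M_0$, while your loops have length comparable to a single $m$; for larger $m\in\mathcal M_0$ a length-$m$ window of a $\nu$-typical point straddles loop junctions, is not an $\epsilon$-shadow of any single length-$m$ segment of the orbit of $x_0$, and subadditivity splits $\phi_m$ in the wrong direction for a lower bound. As you note, weak-$*$ proximity plus upper semicontinuity of $\Phi$ only bounds $\int\Phi\,d\nu$ from above, so the decisive lower inequality in (\ref{main.1}) is not established; testing against the measure of maximal entropy does not rescue it.

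The paper avoids this entirely by never making $\phi_m/m$ nearly constant on $\Omega$ and never splitting the pressure into entropy plus an integral. Egorov--Lusin produces a Pesin set $\Lambda$ on which $\Phi|\Lambda$ is continuous and $\phi_m/m\to\Phi$ \emph{uniformly} for $m\ge N_0$, and $\Phi_\rho$ is taken as a continuous extension of $\Phi|\Lambda$ (Lemma \ref{defining.Lambda}); the branches of the horseshoe are based at a $(\delta,n)$-separated set $E_0$ of points of $\Lambda_0$ whose orbits spend a fraction $\ge 1-2\rho$ of their time in $\Lambda$, so Birkhoff sums of the fixed continuous function $\phi_m/m$ over blocks of the fixed length $n$ are compared termwise with those of $\Phi_\rho$ (Lemma \ref{lemma.4}); the set $E_0$ is chosen so that $\sum_{x\in E_0}\exp S_n\Phi_\rho(x)$ realizes the free energy $P_\mu(\Phi_\rho)=h(\mu)+\int\Phi_\rho\,d\mu$ through Mendoza's $(\delta,n,\alpha)$-spanning characterization (Proposition \ref{free.energy.prop}); and $P(f|\Omega,\phi_m/m)$ is evaluated by Ruelle's periodic-orbit formula, whose sums factor exactly over whole branches (Lemma \ref{lemma.2}), so no window straddles a junction and both inequalities of (\ref{main.1}) drop out of a single weighted count. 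The uniform convergence on the Egorov set together with the visit-frequency control is precisely the replacement for your single-orbit Kingman step; without a device of this kind your lower bound cannot be closed.
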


Theorem B follows from Proposition \ref{main.technical.lemma} and proposition \ref{limit.pressure.lemma}.

\begin{proof}[Proof of Theorem B]
By (\ref{main.1})
\begin{eqnarray*}\label{main.proof.3}
\dfrac{P^*_{\mu}(\Phi)-o(1)}{1+\rho} & \leq & \lim_{k \to +\infty}P\left(f|\Omega,\dfrac{\phi_{m_k}}{m_k}\right)\\
							      & = & P^*(f|\Omega,\Phi).                                                                                                                      
\end{eqnarray*}
Moreover,
\begin{eqnarray*}
P^*(f|\Omega,\Phi) & = & \lim_{k \to +\infty}P\left(f|\Omega,\dfrac{\phi_{m_k}}{m_k}\right)\\
               &  \leq & P^*_{\mu}(\Phi) + o(1).
\end{eqnarray*}
Therefore,
\begin{equation}\label{main.proof.4}
\dfrac{\rho{P^*_{\mu}(\Phi)}-o(1)}{1+\rho} \leq P^*(f | \Omega,\Phi) - P^*_{\mu}(\Phi) \leq o(1).
\end{equation}
Now choose sequences $\rho_n \downarrow 0^+$, $s_n \to +\infty$ and $\Phi_n = \Phi_{\rho(n)}$ and define
\begin{equation}\label{main.proof.6}
\Omega_n = \Omega(\rho_n,s_n,\Phi_n).
\end{equation}
By (\ref{main.proof.4}), $\Omega_n$ is a sequence of hyperbolic basic sets satisfying the claims (1), (2) and (3) of Theorem A.
\end{proof}

\section{Proof of Proposition \ref{main.technical.lemma}: constructing $\Omega$}

Our starting point will be the description of the free energy
$$
P_{\mu}(\phi) = h(\mu) + \int\phi{d\mu}
$$
of a continuous function $\phi$ as a weighted rate of growing of dynamically non-equivalent finite orbits up to finite precision. For this we let $\mu$ 
an $f$-invariant Borel probability and define
\begin{equation}\label{definition.free.energy.1}
P_{\mu}(\phi) := \lim_{\alpha \to 0^+}\lim_{\epsilon \to 0^+}\lim_{n \to +\infty}\dfrac{1}{n}\log\left(\inf_E\left\{\sum_{x \in E}\exp{S_n\phi(x)}\right\}\right),
\end{equation}
\emph{infimum taken over $(\epsilon,n,\alpha)$-spanning subsets $E \subset M$}, where by $(\epsilon,n,\alpha)$-spanning we mean a finite subset 
$E \subset M$ such that
$$
\mu\left(\bigcup_{x \in E}B(x,\epsilon,n)\right) \geq \alpha,
$$
where
$$
B(x,\epsilon,n) := \{\,y \in X \,:\, \dist(f^j(x),f^j(y)) < \epsilon, \ j = 0, \cdots , n-1\,\}.
$$

The next proposition was proved in \cite{mendoza.1988}[Theorem 1.1].

\begin{proposition}\label{free.energy.prop}
Let $f : X \to X$ a continuous self map of a compact metric space $(X,d)$, $\phi$ continuous and $\mu \in {\mathcal M}_f$ an ergodic 
$f$-invariant Borel probability. Then, for every $\alpha > 0$,
\begin{equation}\label{definition.free.energy.2}
P_{\mu}(\phi) = \lim_{\epsilon \to 0^+}\lim_{n \to +\infty}\dfrac{1}{n}\log\left(\inf_E\left\{\sum_{x \in E}\exp{S_n\phi(x)}\right\}\right) = h(\mu) + \int\phi{d\mu},
\end{equation}
where the infimum is taken over all the $(\epsilon,n,\alpha)$-spanning subsets $E \subset M$.
\end{proposition}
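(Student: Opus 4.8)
The plan is to establish, for each fixed $\alpha\in(0,1]$, the two inequalities $\limsup_{n}\tfrac1n\log\inf_E\sum_{x\in E}e^{S_n\phi(x)}\le h(\mu)+\int\phi\,d\mu$ and $\liminf_{n}\tfrac1n\log\inf_E\sum_{x\in E}e^{S_n\phi(x)}\ge h(\mu)+\int\phi\,d\mu$, the infima being over $(\epsilon,n,\alpha)$-spanning sets with $\epsilon$ small; since neither bound will depend on $\alpha$, this simultaneously gives independence of $\alpha$ and the existence of the iterated limit. The argument is the pressure analogue of Katok's entropy formula: the upper bound uses the Shannon--McMillan--Breiman theorem together with Birkhoff's ergodic theorem for $\phi$, and the lower bound uses the Brin--Katok local entropy formula together with Birkhoff again. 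Fix a modulus of continuity $\omega$ for $\phi$, so that $|S_n\phi(y)-S_n\phi(z)|\le n\,\omega(\epsilon)$ whenever $\dist(f^j y,f^j z)<\epsilon$ for $0\le j<n$.

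\emph{Upper bound.} Fix $\delta>0$ and a finite measurable partition $P$ with $\diam(P)<\epsilon$, so that the atom of $P_n:=\bigvee_{j=0}^{n-1}f^{-j}P$ through $x$ lies in $B(x,\epsilon,n)$. By Shannon--McMillan--Breiman, $-\tfrac1n\log\mu(P_n(x))\to h(\mu,P)$ $\mu$-a.e., and by Birkhoff $\tfrac1n S_n\phi(x)\to\int\phi\,d\mu$ $\mu$-a.e.; hence (Egorov, or a monotone exhaustion) there are increasing sets $G_n$ with $\mu(G_n)\ge\alpha$ for $n$ large on which both estimates hold with error $\delta$. Then every atom of $P_n$ meeting $G_n$ has $\mu$-measure $\ge e^{-n(h(\mu,P)+\delta)}$ and carries a point $x_A\in G_n$ with $S_n\phi(x_A)\le n(\int\phi\,d\mu+\delta)$; since these atoms are disjoint and cover $G_n$ there are at most $e^{n(h(\mu,P)+\delta)}$ of them, so $E=\{x_A\}$ is $(\epsilon,n,\alpha)$-spanning with $\sum_{x\in E}e^{S_n\phi(x)}\le e^{n(h(\mu,P)+\int\phi\,d\mu+2\delta)}\le e^{n(h(\mu)+\int\phi\,d\mu+2\delta)}$, using $h(\mu,P)\le h(\mu)$. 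Letting $n\to\infty$, then $\epsilon\to0$ (the bound being $\epsilon$-free), then $\delta\to0$, gives $\le h(\mu)+\int\phi\,d\mu$.

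\emph{Lower bound.} Fix $\delta,\alpha>0$. By the $\liminf$ form of Brin--Katok together with Birkhoff there are, for $\epsilon$ small and $n$ large, increasing sets $G_n$ with $\mu(G_n)>1-\alpha/2$ on which $\mu(B(y,2\epsilon,n))\le e^{-n(h(\mu)-\delta)}$ and $S_n\phi(y)\ge n(\int\phi\,d\mu-\delta)$. Let $E$ be \emph{any} $(\epsilon,n,\alpha)$-spanning set and $E'=\{x\in E:B(x,\epsilon,n)\cap G_n\ne\emptyset\}$; for $x\in E'$ pick $y_x\in B(x,\epsilon,n)\cap G_n$, so $B(x,\epsilon,n)\subset B(y_x,2\epsilon,n)$ gives $\mu(B(x,\epsilon,n))\le e^{-n(h(\mu)-\delta)}$. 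Since $\bigcup_{x\in E}B(x,\epsilon,n)$ covers a subset of $G_n$ of measure $\ge\alpha/2$, we get $\alpha/2\le\sum_{x\in E'}\mu(B(x,\epsilon,n))\le|E'|e^{-n(h(\mu)-\delta)}$, i.e.\ $|E'|\ge\tfrac\alpha2 e^{n(h(\mu)-\delta)}$; moreover $S_n\phi(x)\ge S_n\phi(y_x)-n\,\omega(\epsilon)\ge n(\int\phi\,d\mu-\delta-\omega(\epsilon))$ for $x\in E'$. Hence $\sum_{x\in E}e^{S_n\phi(x)}\ge|E'|e^{n(\int\phi\,d\mu-\delta-\omega(\epsilon))}\ge\tfrac\alpha2 e^{n(h(\mu)+\int\phi\,d\mu-2\delta-\omega(\epsilon))}$, and since this holds for every such $E$, taking $\tfrac1n\log\inf_E$, then $\liminf_n$, then $\epsilon\to0$ and $\delta\to0$ yields $\ge h(\mu)+\int\phi\,d\mu$; combined with the upper bound, this proves the proposition.

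\emph{Main obstacle.} The delicate step is the lower bound, namely converting an \emph{arbitrary} $(\epsilon,n,\alpha)$-spanning family into the cardinality estimate $|E'|\gtrsim e^{n(h(\mu)-\delta)}$: the ball centres need not lie in a measure-theoretically good set, which forces the comparison of $B(x,\epsilon,n)$ with the larger ball $B(y_x,2\epsilon,n)$ about a good point $y_x$, and this is exactly where the local Brin--Katok estimate (rather than a soft argument) is needed. The remaining work — producing the $G_n$ by Egorov or a monotone exhaustion, and driving the errors $\tfrac1n\log(\alpha/2)$, $\delta$ and $\omega(\epsilon)$ to $0$ in the correct order — is routine. (If $h(\mu)=+\infty$, replacing $h(\mu)-\delta$ by an arbitrary constant in the lower bound shows the quantity is $+\infty$ as well; for a $C^{1+\alpha}$ diffeomorphism of a compact manifold this case does not occur.)
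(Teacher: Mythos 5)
Your argument is correct, but note that the paper does not actually prove this proposition: it is quoted verbatim with a citation to Mendoza (J.\ London Math.\ Soc.\ 1988, Theorem 1.1), whose result is precisely the pressure analogue of Katok's entropy formula. What you have written is therefore a self-contained substitute for the omitted proof, and it is the natural one: Shannon--McMillan--Breiman plus Birkhoff to build a cheap $(\epsilon,n,\alpha)$-spanning set out of atoms of $\bigvee_{j<n}f^{-j}P$ with $\diam(P)<\epsilon$ (using only $h(\mu,P)\le h(\mu)$, so no generator is needed), and Brin--Katok plus Birkhoff, with the $B(x,\epsilon,n)\subset B(y_x,2\epsilon,n)$ doubling trick, to show every spanning family must contain at least $\tfrac{\alpha}{2}e^{n(h(\mu)-\delta)}$ centres with nearly maximal Birkhoff sums; you correctly identify that the only delicate point is that the centres of an arbitrary spanning family need not be measure-theoretically good points, which is exactly what the doubling step resolves. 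Two harmless caveats you should make explicit: the upper bound forces $\alpha$ to be read in $(0,1)$, since Egorov cannot produce a full-measure set of uniformity (this matches the paper's use, where $\alpha=\mu(\Lambda_0)/2$); and for fixed $\epsilon$ you only control $\limsup_n$ and $\liminf_n$ separately, so the displayed iterated limit should be understood in the usual Katok/Brin--Katok sense that $\lim_{\epsilon\to0^+}\limsup_n$ and $\lim_{\epsilon\to0^+}\liminf_n$ coincide and equal $h(\mu)+\int\phi\,d\mu$, uniformly in the choice of $\alpha$.
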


The proof of Proposition \ref{main.technical.lemma} follows by fixing $\alpha > 0$, $\delta > 0$, $n > 0$ and a finite $(\delta,n,\alpha)$-spanning subset $E_0$ such that each 
$x \in E_0$ is endowed with a hyperbolic branch $f^{R(x)} : S_x \to U_x$ for a suitable return time to a hyperbolic Pesin set of 
generic points $\Lambda$.Then we choose a suitable subset of those hyperbolic branches to generate a horseshoe with finitely many 
branches and variable return times $\Omega^*$ and then we prove that $\Omega = \bigcup_{n \in \enteros}f^n(\Omega^*)$, the 
$f$-invariant saturate of $\Omega^*$ satisfies the estimatives (\ref{main.1}) in Proposition \ref{main.technical.lemma}.
\ 
\\
\\
Let $\{\psi_i\}$ be a {\bf countable dense subset of continuous functions}. 
\ 
\\
\\
{\bf Let $\rho > 0$ and $s > 0$ be fixed once for all}.
\ 
\\
\\
{\bf Choosing a hyperbolic Pesin set $\Lambda$ of quasi-generic points}

A crucial point in the construction is the choice of non invariant uniformly hyperbolic set $\Lambda$ also called Pesin set of quasi-generic points. 

It is not hard to convince that $\{\phi_n\}$ has tempered variation if and only if 
\begin{equation}\label{tempered.variation}
\limsup_{\delta \to 0^+}\limsup_{n \to +\infty}\dfrac{1}{n}\sup\{|\phi_n(x)-\phi_n(y)|: d(f^k(x),f^k(y)) < \delta, k = 0, \cdots , n-1\} = 0.
\end{equation}

\begin{lemma}\label{defining.Lambda}
There exists a Pesin set $\Lambda$ of generic points with $\mu(\Lambda) \geq 1-\rho$, an integer $N_0 > 0$ and a Borel subset 
$\Lambda_0 \subset \Lambda$ with $\mu(\Lambda_0) \geq (1-\rho)\mu(\Lambda)$ such that:
\begin{enumerate}
\item[a)] $\Phi | \Lambda$ is continuous;
\item[b)] in the subadditive case
\begin{equation}\label{uniformity.1}
\Phi(x) \leq \dfrac{\phi_m(x)}{m} < \Phi(x) + \rho \quad\forall \ x \in \Lambda \quad\forall \ m \geq N_0;  
\end{equation}
and, in the superadditive case,
\begin{equation}\label{uniformity.2}
\Phi(x) - \rho < \dfrac{\phi_m(x)}{m} \leq \Phi(x) \quad\forall \ x \in \Lambda \quad\forall \ m \geq N_0;  
\end{equation}
\item[c)]
\begin{equation}\label{quasigeneric.points}
\forall \ x \in \Lambda: \quad\left|\sum_{k=0}^{n-1}\psi_i(f^k(x)) - \int\psi_i{d\mu} \right| < \rho/2 \quad\forall i \leq s \quad\forall n \geq N_0
\end{equation}

\item[d)]
\begin{equation}\label{frequency.visits.Lambda}
\forall \ x \in \Lambda_0: \quad \dfrac{\#\{0 \leq j < n: f^j(x) \in \Lambda\}}{n} < 1+\rho \quad\forall \ n \geq N_0
\end{equation}
and
\begin{equation}\label{frecuency.visits.Lambda^c}
\forall \ x \in \Lambda_0: \quad \dfrac{\#\{0 \leq j < n: f^j(x) \in \Lambda^c\}}{n} < 2\rho \quad\forall \ n \geq N_0.
\end{equation}
\end{enumerate}
\end{lemma}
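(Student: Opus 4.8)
\emph{Overall approach.} The plan is to build $\Lambda$ as a compact subset of a Pesin regular block, obtained from that block by four successive restrictions, each discarding a set of $\mu$-measure less than $\rho/4$ and coming, respectively, from Kingman's subadditive ergodic theorem, from Lusin's theorem, and from Birkhoff's ergodic theorem applied to $\psi_1,\dots,\psi_s$; Egorov's theorem is used at each step to upgrade an almost-everywhere statement into a uniform one valid from a common time $N_0$ on. The set $\Lambda_0$ is then cut out of $\Lambda$ by one further application of Birkhoff's theorem, to the indicator $\mathbf{1}_\Lambda$. Throughout one works inside the full $\mu$-measure set of Oseledec-regular points, which, $\mu$ being ergodic and hyperbolic, carry $\mu$-almost everywhere one and the same nonzero Lyapunov spectrum.

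\emph{Properties (a)--(c).} First fix a Pesin regular set $\Lambda^{(1)}$, i.e.\ a compact set over which the Oseledec splitting and the Pesin hyperbolicity estimates are uniform; since the Pesin blocks exhaust the set of regular points one may take $\mu(\Lambda^{(1)})>1-\rho/4$, and this block supplies the positive lower bound $\chi$ on the rate of hyperbolicity appearing in Proposition \ref{main.technical.lemma}. By Kingman's theorem $\phi_m/m\to\Phi$ $\mu$-almost everywhere, so Egorov's theorem yields $\Lambda^{(2)}\subset\Lambda^{(1)}$ with $\mu(\Lambda^{(1)}\setminus\Lambda^{(2)})<\rho/4$ and an integer $N_1$ such that $|\phi_m(x)/m-\Phi(x)|<\rho$ for every $x\in\Lambda^{(2)}$ and every $m\geq N_1$; the right-hand inequality of (\ref{uniformity.1}) (resp.\ the left-hand inequality of (\ref{uniformity.2})) is precisely this Egorov estimate, while the remaining, non-strict inequality is the pointwise identity $\Phi(x)=\inf_{m}\phi_m(x)/m$ (resp.\ $\sup$) recorded in (\ref{definition.Phi}). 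Next, Lusin's theorem applied to the Borel function $\Phi$ produces a compact $\Lambda^{(3)}\subset\Lambda^{(2)}$ with $\mu(\Lambda^{(2)}\setminus\Lambda^{(3)})<\rho/4$ on which $\Phi$ is continuous, which is (a). Finally, Birkhoff's theorem applied to the finitely many continuous functions $\psi_1,\dots,\psi_s$, followed by Egorov's theorem, produces $\Lambda:=\Lambda^{(4)}\subset\Lambda^{(3)}$ with $\mu(\Lambda^{(3)}\setminus\Lambda^{(4)})<\rho/4$ and an integer $N_2$ such that $\bigl|\frac1n\sum_{k=0}^{n-1}\psi_i(f^k(x))-\int\psi_i\,d\mu\bigr|<\rho/2$ for every $x\in\Lambda$, every $i\leq s$ and every $n\geq N_2$, which is (c). Thus $\Lambda$ is compact, $\mu(\Lambda)>1-\rho$, and one provisionally sets $N_0:=\max\{N_1,N_2\}$.

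\emph{Property (d).} Since $\Lambda$ is compact it is Borel, and Birkhoff's theorem applied to $\mathbf{1}_\Lambda$ and to $\mathbf{1}_{\Lambda^c}=1-\mathbf{1}_\Lambda$ shows that for $\mu$-almost every $x$ the frequencies $\frac1n\#\{0\leq j<n:f^j(x)\in\Lambda\}$ and $\frac1n\#\{0\leq j<n:f^j(x)\in\Lambda^c\}$ converge, respectively, to $\mu(\Lambda)\leq 1$ and to $\mu(\Lambda^c)<\rho$. By Egorov's theorem there are $\Lambda_0\subset\Lambda$ with $\mu(\Lambda\setminus\Lambda_0)<\rho\,\mu(\Lambda)$, hence $\mu(\Lambda_0)\geq(1-\rho)\mu(\Lambda)$, and an integer $N_3$ such that for every $x\in\Lambda_0$ and every $n\geq N_3$ both frequencies lie within $\rho/2$ of their respective limits; using $\mu(\Lambda^c)<\rho$ this gives (\ref{frecuency.visits.Lambda^c}), while (\ref{frequency.visits.Lambda}) holds for the trivial reason that the first frequency never exceeds $1<1+\rho$. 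Replacing $N_0$ by $\max\{N_0,N_3\}$ completes the construction; enlarging $N_0$ leaves (b) and (c) intact, as both are quantified over all $m$ (resp.\ $n$) $\geq N_0$.

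\emph{Main difficulty.} There is no serious obstacle here: the statement is a routine superposition of Oseledec--Pesin regularity, Kingman's theorem, Lusin's theorem, Birkhoff's theorem and Egorov's theorem, each used once. The points requiring attention are that the restrictions be nested, so that every property --- each of the form ``for all $x$ in the smaller set and all indices past the larger threshold'' --- is preserved by the later steps; that $\Lambda$ be chosen compact, both so as to be a genuine Pesin block and so that $\mathbf{1}_\Lambda$ be Borel in (d); and that the non-strict inequalities in (b) be read off from the pointwise inf/sup characterization (\ref{definition.Phi}) rather than from the Egorov estimate, which by itself only gives the strict two-sided bound. The tempered-variation hypothesis on $\{\phi_n\}$ is not used in this lemma.
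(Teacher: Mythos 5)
Your proposal is correct and follows essentially the same route as the paper: the paper's nested sets $X_N$, $\Sigma_N$, $Y_N$ with measure increasing to $1$, together with the Egorov--Lusin theorem for continuity of $\Phi$ and ergodicity (Birkhoff applied to $\mathbf{1}_\Lambda$) for the visit frequencies, are exactly your successive Kingman--Egorov, Lusin, Birkhoff--Egorov refinements and your construction of $\Lambda_0$. The only cosmetic difference is that you make the initial Pesin block and the compactness bookkeeping explicit, which the paper leaves implicit.
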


This follows from Egorov-Lusin theorem and the ergodicity of $\mu$. We refer to section \ref{proof.lemmas} for details.
\ 
\\
\\
{\bf Choosing $\alpha > 0$}
\ 
\\
\\
\emph{
We define $\alpha$ as
\begin{equation}\label{defining.alpha}
\alpha = \dfrac{\mu(\Lambda_0)}{2} 
\end{equation}
}
\ 
\\
\\
{\bf The definition of $\Phi_{\rho}$}

\begin{definition}
We define $\Phi_{\rho}$ to be a continuous extension of $\Phi \mid \Lambda$ with the sole condition that $\|\Phi_{\rho}\|_{\infty} \leq L$.
\end{definition}

Clearly $\Phi_{\rho} \to \Phi$ in measure as $\rho \to 0^+$. We shall see that $\Omega = \Omega(\rho,s,\Phi_{\rho})$ constructed previously 
for a continuous potential $\phi$ satisfies (\ref{main.1}).
\ 
\\
\\
{\bf Choosing a small precision $\delta > 0$ and $\mathcal{M}_0$}
\ 
\\
\begin{lemma}
There exists $\delta(\rho,s) > 0$ and $\mathcal{M}_0 = \{m_k\}_{k > 0}$ such that, for every $0 < \delta < \delta(\rho,s)$ it holds
\begin{equation}\label{definition.delta.1}
 \forall \ x,y \in M: \quad d(x,y) < \delta \Longrightarrow |\psi_{i}(x)-\psi_{i}(y)|  <  \rho/2, \quad\quad \forall \ i \leq s,
\end{equation}
\begin{equation}\label{definition.delta.2}
 \forall \ x,y \in M: \quad d(x,y) < \delta \Longrightarrow \left|\dfrac{\phi_m(x)}{m}-\dfrac{\phi_m(y)}{m}\right| < \rho, \quad\forall \ m \in \mathcal{M}_0.
\end{equation}
and
\begin{equation}\label{definition.delta.3}
\left|\lim_{n \to +\infty}\dfrac{1}{n}\log\left(\inf_E\left\{\sum_{x \in E}\exp{S_n\Phi_{\rho}}\right\}\right) - P_{\mu}(\Phi_{\rho})\right| < \rho/4.
\end{equation}
infimum is taken over all the $(\delta,n,\alpha)$-spanning subsets $E$. 
\end{lemma}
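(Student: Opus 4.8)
The plan is to handle the three displayed conditions separately, obtain a threshold for each, and let $\delta(\rho,s)$ be the smallest of them; the infinite set $\mathcal{M}_0$ will be dictated only by (\ref{definition.delta.2}). The reason each condition then holds for \emph{every} smaller $\delta$ is a monotonicity: shrinking $\delta$ only restricts the pairs $(x,y)$ appearing in (\ref{definition.delta.1}) and (\ref{definition.delta.2}), while for (\ref{definition.delta.3}) a $(\delta',n,\alpha)$-spanning set with $\delta'<\delta$ is again $(\delta,n,\alpha)$-spanning, so the inner infimum is non-increasing in $\delta$ and, by Proposition~\ref{free.energy.prop}, increases to $P_\mu(\Phi_\rho)$ as $\delta\to0^+$.

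Condition (\ref{definition.delta.1}) is immediate: $\psi_1,\dots,\psi_s$ are finitely many continuous functions on the compact space $M$, hence uniformly continuous, so there is $\delta_1>0$ with $d(x,y)<\delta_1\Rightarrow|\psi_i(x)-\psi_i(y)|<\rho/2$ for every $i\le s$, and a fortiori for every $\delta\le\delta_1$.

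For (\ref{definition.delta.2}) I would use the hypothesis that $\{\phi_n\}$ has tempered variation, in the equivalent form (\ref{tempered.variation}). Choose $\delta_2>0$ small enough that
$$\limsup_{n\to+\infty}\frac1n\sup\{|\phi_n(x)-\phi_n(y)|:d(f^k(x),f^k(y))<\delta_2,\ 0\le k\le n-1\}<\rho,$$
and pick $N_2$ so that the quantity under the $\limsup$ is $<\rho$ for all $n\ge N_2$. Take $\mathcal{M}_0=\{m_k\}_{k>0}$ to be $\{m:m\ge N_2\}$ (any infinite subset of it works as well, which is convenient for the later uses of $\mathcal{M}_0$ in Proposition~\ref{main.technical.lemma}). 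Then for $\delta\le\delta_2$ and $m\in\mathcal{M}_0$ any two points whose first $m$ iterates stay $\delta$-close --- in particular two points lying in a common cylinder of the Markov coding of the horseshoe $\Omega$ constructed below, which is the situation in which (\ref{definition.delta.2}) is invoked --- satisfy $|\phi_m(x)/m-\phi_m(y)/m|<\rho$. This is the only point where tempered variation enters, and it is the crux of the lemma: without it one cannot select a single scale $\delta$ that controls the oscillation of $\phi_m/m$ simultaneously for an infinite set of exponents $m$.

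Finally, (\ref{definition.delta.3}) follows from Proposition~\ref{free.energy.prop}; recall that $\Phi_\rho$ is continuous by construction and that $\alpha=\mu(\Lambda_0)/2>0$ has already been fixed. Applied to the continuous potential $\Phi_\rho$ and the ergodic measure $\mu$, the proposition gives that $\lim_{n\to+\infty}\frac1n\log\inf_E\{\sum_{x\in E}\exp S_n\Phi_\rho\}$, with infimum over $(\delta,n,\alpha)$-spanning sets $E$, tends to $h(\mu)+\int\Phi_\rho\,d\mu=P_\mu(\Phi_\rho)$ as $\delta\to0^+$. Hence there is $\delta_3>0$ for which this inner limit stays within $\rho/4$ of $P_\mu(\Phi_\rho)$ whenever $\delta\le\delta_3$, and by the monotonicity noted above it remains so for all smaller $\delta$. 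Setting $\delta(\rho,s):=\min\{\delta_1,\delta_2,\delta_3\}$ and keeping $\mathcal{M}_0$ as above, all three conditions hold for every $0<\delta<\delta(\rho,s)$, which is the assertion.
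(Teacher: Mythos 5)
Your treatment of (\ref{definition.delta.1}) and (\ref{definition.delta.3}) is exactly the paper's: uniform continuity of the finitely many $\psi_i$, and the definition of the limit in Proposition \ref{free.energy.prop} applied to the continuous function $\Phi_\rho$ (the monotonicity remark about spanning sets is a harmless extra). The problem is (\ref{definition.delta.2}). What you actually prove from the tempered variation condition (\ref{tempered.variation}) is oscillation control for pairs $x,y$ whose first $m$ iterates stay $\delta$-close, i.e.\ control over Bowen balls of length $m$. That is strictly weaker than the displayed statement, which quantifies over \emph{all} pairs with $d(x,y)<\delta$ at time zero, uniformly over the infinite set $\mathcal{M}_0$; this is an equicontinuity assertion about the family $\{\phi_m/m\}_{m\in\mathcal{M}_0}$ and it does not follow from tempered variation (already for an additive sequence $\phi_m=S_m\phi$ over a hyperbolic map, two points at a fixed small distance can have Birkhoff averages differing by more than $\rho$ for all large $m$, so no choice of infinite $\mathcal{M}_0$ and fixed $\delta$ rescues the literal statement).

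Your parenthetical escape route --- that (\ref{definition.delta.2}) is only ever invoked for points whose first $m$ iterates stay close --- is not accurate for this paper. In Lemma \ref{lemma.1} and in the shadowing argument of Lemma \ref{lemma.2}, the inequality is applied termwise to pairs $(f^j(z),f^j(x))$ that are known to be $\delta/4$-close only at that instant (the shadowing lasts at most $R(x)\le(1+\rho)n$ iterates, with $n$ fixed), while $m\in\mathcal{M}_0$ is arbitrarily large; so it is the full single-time version of (\ref{definition.delta.2}) that is used there, not your Bowen-ball version. To be fair, the paper's own proof of this lemma is the same one-line appeal to tempered variation and is open to the identical objection, so you have essentially reproduced (and made visible) that gap; but as a proof of the statement as written, your argument for (\ref{definition.delta.2}) does not close it, and the reinterpretation you offer would not support the later estimates either.
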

 
\begin{proof}
(\ref{definition.delta.1}) follows from the continuity of $\psi_i$; (\ref{definition.delta.2}) follows from the tempered variation condition 
(\ref{tempered.variation}) and (\ref{definition.delta.3}) follows from the definition of the limit (\ref{definition.free.energy.2}).
\end{proof}
\ 
\\
\\
{\bf Choosing a time $N_0 > 0$}
\ 
\\
Pesin set are endowed with covering by rectangles obtained from regular neighborhoods, that is, local coordinates at which $f$ looks like a small $C^1$ perturbation of a linear 
hyperbolic isomorphism. The diffeomorphism $f$ behaves as a uniformly hyperbolic map in these coordinates so preserving suitable continuous families of 
cones and therefore approximately local stable and unstable admissible manifolds so providing the structure of a hyperbolic branch similar to those 
used in the well-known construction of a horseshoe. The covering by these rectangles behaves under iterations of $f$ mostly as pieces of a 
a Markov partition. 

\begin{definition}
A finite covering of $\Lambda$ by rectangles ${\mathcal R} = \{\R_i\}$ is called a $(\delta,\kappa, \lambda)$-pseudo Markov covering if 
$\diam(R_i) < \delta$ for every $i$ and the following \emph{hyperbolic return property} holds true: 
there exists for every $\R_i$ a subrectangle $\subR_i \subset \R_i$ with $\diam(\subR_i) < \kappa$ such that
\begin{itemize}
\item $\Lambda \subset \bigcup_i\subR_i$;
\item for every $x \in \subR_i \cap \Lambda$ returning to $\subR_i \cap \Lambda$ after $m$-iterates there exists a hyperbolic branch
$$
f^m : S_i \to U_i
$$
where $S_i \subset \R_i$ (resp. $U_i \subset \R_i$) is an stable (resp. unstable) cylinder; moreover, the rate of nonlinear expansion along the 
unstable admissible manifolds is bounded from below by a constant $\lambda > 1$;
\item and
\begin{equation}\label{rho.shadowing.1}
\diam(f^j(S_x)) < \delta \quad\text{for every} \ j = 0, \cdots , R(x)-1.
\end{equation} 
\end{itemize}
\end{definition}

Now we fix a $(\delta/4,\kappa, \lambda)$-pseudo Markov covering of $\Lambda$.

\begin{lemma}\label{definition.n}
There exists $N_0 > 0$ such that, for every $n \geq N_0$ it holds 
\begin{equation}\label{definition.n.1}
\left|\dfrac{1}{n}\log\left(\inf_E\left\{\sum_{x \in E}\exp{S_n\Phi_{\rho}}\right\}\right) - P_{\mu}(\Phi_{\rho})\right| < \rho/2,
\end{equation}
\begin{equation}\label{definition.n.2}
\exp(n\rho) \geq \#{\mathcal R},
\end{equation}
\begin{equation}\label{definition.n.3}
\dfrac{L}{n(1+\rho)} < \rho. 
\end{equation}
and the whole set of conditions (\ref{uniformity.1}), (\ref{uniformity.2}), (\ref{quasigeneric.points}), (\ref{frequency.visits.Lambda}) and 
(\ref{frecuency.visits.Lambda^c}) still holds true.
\end{lemma}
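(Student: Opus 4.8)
The plan is to take $N_0$ to be the maximum of finitely many thresholds, one for each requirement collected in the statement, each of which is satisfied for all sufficiently large times. Recall that at this point $\rho$, $s$, the precision $\delta = \delta(\rho,s)$, the subsequence $\mathcal{M}_0$, the Pesin set $\Lambda$ (with its subset $\Lambda_0$), the continuous potential $\Phi_\rho$, the constants $\alpha > 0$ and $L > 0$, and the $(\delta/4,\kappa,\lambda)$-pseudo Markov covering $\mathcal{R}$ of $\Lambda$ have all been fixed. Hence every quantity appearing in (\ref{definition.n.1})--(\ref{definition.n.3}) depends only on $n$, and the argument is a bookkeeping of limits.

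For (\ref{definition.n.1}), write $a_n := \frac{1}{n}\log\left(\inf_E\left\{\sum_{x \in E}\exp(S_n\Phi_\rho)\right\}\right)$, infimum over $(\delta,n,\alpha)$-spanning subsets $E$. By (\ref{definition.delta.3}) the limit $\lim_{n \to +\infty} a_n$ exists and satisfies $\left|\lim_n a_n - P_\mu(\Phi_\rho)\right| < \rho/4$ (this is where the smallness of $\delta$ is used, via Proposition \ref{free.energy.prop} applied to the continuous potential $\Phi_\rho$, using $\alpha > 0$). Choosing $N_1$ so that $|a_n - \lim_n a_n| < \rho/4$ for all $n \geq N_1$ then gives $|a_n - P_\mu(\Phi_\rho)| < \rho/2$ for every $n \geq N_1$, which is (\ref{definition.n.1}).

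For (\ref{definition.n.2}), the cardinality $\#\mathcal{R}$ is a fixed finite integer, so, $\rho > 0$ being fixed, there is $N_2$ with $\exp(n\rho) \geq \#\mathcal{R}$ for all $n \geq N_2$. For (\ref{definition.n.3}), $L/(n(1+\rho)) \to 0$ as $n \to +\infty$, so there is $N_3$ with $L/(n(1+\rho)) < \rho$ for all $n \geq N_3$. Finally, Lemma \ref{defining.Lambda} furnishes an integer $N_4$ beyond which (\ref{uniformity.1}), (\ref{uniformity.2}), (\ref{quasigeneric.points}), (\ref{frequency.visits.Lambda}) and (\ref{frecuency.visits.Lambda^c}) all remain valid. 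Then $N_0 := \max\{N_1, N_2, N_3, N_4\}$ has all the required properties.

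There is essentially no obstacle here: the lemma merely records that the finitely many asymptotic conditions accumulated along the construction can be met simultaneously by passing to a common large time. The only point deserving a word of care is that the deduction of (\ref{definition.n.1}) relies on the \emph{existence} of the limit in (\ref{definition.free.energy.2}), so that closeness of the limit to $P_\mu(\Phi_\rho)$ upgrades to closeness of the $n$-th term for large $n$, rather than merely a bound on its upper and lower limits; this is exactly what (\ref{definition.delta.3}) already provides.
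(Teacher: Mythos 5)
Your proof is correct and follows exactly the argument the paper leaves implicit: each of (\ref{definition.n.1})--(\ref{definition.n.3}) is an eventually-true condition in $n$ with all other data fixed, and the conditions of Lemma \ref{defining.Lambda} persist under enlarging $N_0$ (as the paper notes in the remark following that lemma's proof), so one takes the maximum of finitely many thresholds. Your remark about needing the existence of the limit in (\ref{definition.free.energy.2}) for fixed $\delta$, as supplied by (\ref{definition.delta.3}), is the right point of care and is consistent with the paper's usage.
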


\begin{lemma}\label{return.time.lemma}
There exists $N_0 > 0$ larger than the $N_0$ introduced at previous Lemma \ref{definition.n} such that for every 
subrectangle $\subR_i \subset \R_i$ of $(\delta/4,\kappa, \lambda)$-pseudo Markov covering of $\Lambda$ previouslky chosen 
there exists a subset $\Lambda_{0,i} \subset \subR_i \cap \Lambda_0$ with
$$
\mu(\Lambda_{0,i}) \geq \mu(\subR_i \cap \Lambda_0)/2  
$$
such that for every $x \in \Lambda_{0,i}$ there exists a return time $f^{R(x)}(x) \in \Lambda_{0,i}$ with
\begin{equation}\label{return.time}
R(x) \in [n,(1+\rho)n].
\end{equation}
\end{lemma}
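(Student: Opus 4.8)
The statement is a quantitative Poincar\'e recurrence estimate for the non-invariant sets $\subR_i \cap \Lambda_0$, and the plan is to deduce it from the Birkhoff ergodic theorem together with Egorov's theorem, exploiting the ergodicity of $\mu$. Fix a subrectangle $\subR_i$ of the $(\delta/4,\kappa,\lambda)$-pseudo Markov covering of $\Lambda$ fixed above and set $A_i := \subR_i \cap \Lambda_0$; if $\mu(A_i) = 0$ there is nothing to do, so assume $p_i := \mu(A_i) > 0$. By ergodicity, Birkhoff's theorem produces a Borel set $\widehat{A}_i \subseteq A_i$ with $\mu(\widehat{A}_i) = p_i$ on which $\frac{1}{m}\#\{0 \le j < m : f^j(x) \in A_i\} \to p_i$. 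I would first record that $\widehat{A}_i$ is \emph{invariant under first returns to $A_i$}: a finite shift does not change a Birkhoff average, so if $x \in \widehat{A}_i$ and $f^k(x) \in A_i$ then $f^k(x) \in \widehat{A}_i$; in particular the return times of a point of $\widehat{A}_i$ to $A_i$ coincide with its return times to $\widehat{A}_i$.

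Next I would use that, for $x \in \widehat{A}_i$, the number of visits to $A_i$ in the window $[n,(1+\rho)n]$ is the difference of the counts up to $(1+\rho)n$ and up to $n$, hence is asymptotic to $\rho\,p_i\,n$ and, in particular, strictly positive for all $n$ beyond some threshold $N_0(x)$; any visit time $R(x)$ in this window then satisfies $R(x) \in [n,(1+\rho)n]$ and, by the previous paragraph, $f^{R(x)}(x) \in \widehat{A}_i$. Since $N_0(\cdot)$ is finite on $\widehat{A}_i$, Egorov's theorem --- concretely, the fact that $\mu(\{x \in \widehat{A}_i : N_0(x) \le N\}) \uparrow p_i$ as $N \to \infty$ --- lets me pick a Borel subset $\Lambda_{0,i} \subseteq \widehat{A}_i$ with $\mu(\Lambda_{0,i}) \ge p_i/2 = \mu(\subR_i \cap \Lambda_0)/2$ together with a threshold $N_{0,i}$ that works uniformly for all $x \in \Lambda_{0,i}$ and all $n \ge N_{0,i}$. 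Since the pseudo Markov covering has finitely many subrectangles, I would finally take $N_0$ to be the maximum of the $N_{0,i}$ and of the $N_0$ furnished by Lemma \ref{definition.n}.

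The delicate point, on which I would spend the most care, is guaranteeing that the return lands in $\Lambda_{0,i}$ itself and not merely in $\subR_i \cap \Lambda_0$. Invariance of $\widehat{A}_i$ under first returns delivers the return into $\widehat{A}_i$ for free, but $\Lambda_{0,i}$ is an Egorov subset with no dynamical meaning, and since the windows $[n,(1+\rho)n]$ scale multiplicatively while return times shift additively one cannot simply transport the recurrence property of $x$ to $f^{R(x)}(x)$. The way I would handle this is to note that, inside the window, $x$ has $\sim \rho\, p_i\, n$ distinct visit times to $\widehat{A}_i$, so it suffices to arrange $\Lambda_{0,i}$ --- for instance as a level set $\{N_0(\cdot) \le N\}$ refined by also controlling the relative gaps of the visit sequence past time $N$ --- in such a way that at least one of these visit times returns into $\Lambda_{0,i}$ while still keeping $\mu(\Lambda_{0,i}) \ge p_i/2$; checking this compatibility is the technical heart of the argument, after which Lemma \ref{return.time.lemma} follows at once.
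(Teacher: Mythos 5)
Your argument is essentially the paper's own: the lemma is proved there by exactly the Birkhoff--Egorov recurrence argument you describe, stated in two lines with a reference to Katok--Mendoza (for a Borel set $A$ with $\mu(A)>0$, the set of $x\in A$ admitting a return time to $A$ in $[n,(1+\rho)n]$ has measure at least $(1-\epsilon)\mu(A)$ for all large $n$), and your write-up is if anything more detailed --- e.g.\ the remark that the Birkhoff-regular set $\widehat A_i$ is invariant under returns already gives more than the paper's sketch. The point you single out as the ``technical heart,'' namely forcing the return to land in the Egorov set $\Lambda_{0,i}$ itself rather than merely in $\subR_i\cap\Lambda_0$, is not addressed in the paper either: its argument only yields returns into $A=\subR_i\cap\Lambda_0$, so the lemma as literally stated is not fully justified there, and your proposed fix (refining $\Lambda_{0,i}$ by controlling gaps of the visit sequence) is left vague and would run into the same chicken-and-egg issue one level down. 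The honest resolution is that the self-referential requirement is not needed: in the sequel the only thing used is that each $x\in E_0\subset\bigcup_i\Lambda_{0,i}$ returns to $\subR_i\cap\Lambda$ at some time $R(x)\in[n,(1+\rho)n]$, because the hyperbolic branch $f^{R(x)}\colon S_x\to U_x$ is furnished by the hyperbolic return property of the pseudo-Markov covering, which asks for a return to the subrectangle within the Pesin set, not to $\Lambda_{0,i}$. So your argument through the Egorov step is a complete proof of the statement actually used (returns into $\subR_i\cap\Lambda_0$, or even into your $\widehat A_i$); rather than trying to close the self-referential version, it is better to weaken the conclusion of the lemma accordingly.
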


This follows from the ergodicity of $\mu$: Let $A \subset M$ be a Borel set with $\mu(A) > 0$. Then given $\rho > 0$ and $n > 0$ define
$$
A_{\rho,n} := \{x \in A: x \quad\text{has a return time}\quad R(x) \in [n,(1+\rho)n]\}
$$
Then given $0 < \epsilon < 1$ there exists $N > 0$ and a Borel subset $A_{\epsilon} \subset A$ such that
$$
\mu(A_{\rho,n}) \geq (1-\epsilon)\mu(A) \quad\text{for every} \ n \geq N.
$$
Cf. \cite{katok.mendoza}. 
\ 
\\
\\
{\bf We fix once for all some $n \geq N_0$ satisfying conditions of Lemma \ref{definition.n} and Lemma \ref{return.time.lemma}}
\ 
\\
\\
{\bf Choosing $E_0$}
\ 
\\
\\
Now we notice that,
$$
\mu(\bigcup_{i}\Lambda_{0,i}) \geq \alpha.
$$
Therefore we can choose a maximal $(\delta,n)$ separated subset $E_0 \subset \bigcup_i\Lambda_{0,i}$ such that
\begin{equation}\label{main.estimative.1}
\left|\dfrac{1}{n}\log\left(\sum_{x \in E_0}\exp(S_n\Phi_{\rho}(x))\right) - P_{\mu}(\Phi_{\rho})\right| < \rho
\end{equation}
\ 
\\
\\
{\bf The construction of $\Omega$}
\ 
\\
\\
By construction for each point $x \in E_0$ there exists a hyperbolic branch $f^{R(x)}: S_x \to U_x$ contained in some $\R_i$ and such that
\begin{equation}\label{rho.shadowing.2}
\diam(f^j(S_x)) < \delta/4 \quad\text{for every} \ j = 0, \cdots , R(x)-1.
\end{equation}
This and the condition of separation of points in $E_0$ implies that any two different branches subordinated to the same rectangle are disjoint.

Moreover, by (\ref{definition.delta.1}), (\ref{rho.shadowing.2}) and (\ref{quasigeneric.points}) every such branch is 
\emph{$(\rho,s)$-generic}, that is,
\begin{equation}\label{rho-s.generic}
\left|\frac 1n\sum_{j=0}^{R(x)-1}\phi_{i}(f^{j}(x))-\int\phi_{i}d\mu\right|\leq \rho \quad \forall i\leq s
\end{equation}

Then we choose $\ell > 0$ and a subset 
$$
E_{\ell} := \subR_{\ell} \cap E_0
$$
such that
\begin{equation}\label{main.estimative.2}
\sum_{x \in E_{\ell}}\exp{S_n\Phi_{\rho}(x)} \geq \sum_{x \in E_{\ell'}}\exp{S_n\Phi_{\rho}(x)} \quad\text{for every}\quad \ell' \not= \ell,
\end{equation}
and define $\Omega(\rho,s,\Phi_{\rho})$ as the $f$-invariant saturate of the horseshoe with finitely many branches defined by the collection of branches 
$\{f^R(x): S_x \to U_x: x \in E_{\ell}\}$ chosen by condition (\ref{main.estimative.2}):
\begin{equation}\label{definition.Omega}
\Omega(\rho,s,\Phi_{\rho}) = \bigcup_{n \in \enteros}f^n\left(\bigcap_{n > 0}(f^{R})^{n}\bigcup_{x \in E_{\ell}}S_x\right), 
\end{equation}
where $f^R | S_x = f^{R(x)}$.

By \cite[Proposition 5.1]{luzzatto.sanchez}, all the ergodic $f$-invariant measures supported on $\Omega$ belongs to ${\mathcal O}(\rho,s)$ 
since the branches $\{f^{R(x)} : S_x \to U_x: x \in E_{\ell}\}$ are $(\rho,s)$-generic.

\section{Proof of Proposition \ref{main.technical.lemma}: estimating the pressure $P(f|\Omega,\phi_m/m)$, $m \in \mathcal{M}_0$}

To prove inequality (\ref{main.1}) in Main Technical Lemma we bound the topological pressure $P(f|\Omega,\phi)$ computed by the formula
\begin{equation}\label{topological.pressure.limit}
P(\Omega,\phi) = \limsup\limits_{N \to +\infty}\dfrac{1}{N}\log\left(\sum\limits_{x \in Per(N)}\exp{S_N\phi}\right).
\end{equation}
This was proved in \cite[Section 7.19 (7.11)]{ruelle.2004}.

\begin{lemma}\label{lemma.1}
For every $x \in E_0$ and for every $m \in \mathcal{M}_0$
\begin{equation}\label{bounded.variation.phi_m/m.1}
\left|S_{R(x)}\dfrac{\phi_m(z)}{m}-S_{R(x)}\dfrac{\phi_m(x)}{m}\right| < R(x)\rho \quad\forall \ z \in S_x.
\end{equation}
\end{lemma}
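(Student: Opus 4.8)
The plan is to reduce the estimate to the uniform bounded variation condition (\ref{definition.delta.2}) via the shadowing property of the hyperbolic branch attached to $x$. First I would record that, by construction, $f^{R(x)}\colon S_x\to U_x$ satisfies (\ref{rho.shadowing.2}), so that for each $j\in\{0,\dots,R(x)-1\}$ the points $f^j(x)$ and $f^j(z)$ both lie in $f^j(S_x)$, a set of diameter $<\delta/4<\delta$. Hence $d(f^j(x),f^j(z))<\delta$ for every such $j$ and every $z\in S_x$; this is the only geometric input needed.

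Next I would apply (\ref{definition.delta.2}) — which holds because $\{\phi_n\}$ has tempered variation and $\delta<\delta(\rho,s)$ — to each pair $f^j(x),f^j(z)$, obtaining $\left|\tfrac{\phi_m(f^j(z))}{m}-\tfrac{\phi_m(f^j(x))}{m}\right|<\rho$ for every $m\in\mathcal M_0$ and every $0\le j\le R(x)-1$. Summing these $R(x)$ inequalities and using the triangle inequality gives
\[
\left|S_{R(x)}\tfrac{\phi_m}{m}(z)-S_{R(x)}\tfrac{\phi_m}{m}(x)\right|\le\sum_{j=0}^{R(x)-1}\left|\tfrac{\phi_m(f^j(z))}{m}-\tfrac{\phi_m(f^j(x))}{m}\right|<R(x)\rho,
\]
which is the asserted bound. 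The inequality is strict because each of the $R(x)\ge 1$ summands is strictly less than $\rho$.

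There is essentially no obstacle here: the whole content is that $x$ and any $z\in S_x$ remain $\delta$-close along the first $R(x)$ iterates, which is exactly the $\delta/4$-shadowing built into the pseudo Markov covering, together with the fact that the one-step fluctuation of $\phi_m/m$ on $\delta$-close points is controlled uniformly in $m\in\mathcal M_0$ by the choice of $\delta$. The only point worth stating carefully is that both functions here are honest Birkhoff sums of the single continuous function $\phi_m/m$, so the per-step estimate really does add up along the orbit of length $R(x)$.
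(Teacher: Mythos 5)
Your argument is correct and is exactly the paper's proof, merely written out in full: the paper's one-line justification likewise combines the per-step estimate (\ref{definition.delta.2}) with the shadowing bound $\diam(f^j(S_x))<\delta/4$ from (\ref{rho.shadowing.2}) and sums over $j=0,\dots,R(x)-1$. No changes are needed.
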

\begin{proof}
We use (\ref{definition.delta.2}) and $\diam(f^j(S_x)) < \delta/4$ for $j = 0, \cdots , R(x)-1$ to get (\ref{bounded.variation.phi_m/m.1}).
\end{proof}

We observe that for every $f$-periodic point $z \in \Omega$ there exists a unique finite subset 
$\{z_0, \cdots , z_{p-1}: p > 1\} \subset E^p_{\ell}$ which $\delta/4$-shadows the orbit of $z$ up to its return time, namely:
\begin{equation}\label{delta.shadowing.condition}
\dist(f^{j+\sum_{i < k}R(x_i)}(z), f^j(x_k)) < \delta/4 \quad\text{for}\quad j = 0, \cdots , R(x_k)-1 \quad\text{and every}\quad k = 0, \cdots p-1.
\end{equation}
where
\begin{equation}\label{admissible.periods.equation.0}
N = \sum\limits_{i=0}^{p-1}R(x_i), 
\end{equation}
is the period of $z$. Then we define
$$
\Delta(p) = \{N \in \naturales: \exists \ [x_0, \cdots , x_{p-1}] \in E^p_{\ell} \ \text{such that} \ N = \sum_iR(x_i)\}.
$$

\begin{lemma}\label{lemma.2}
For every $m \in \mathcal{M}_0$:
 \begin{eqnarray}
\label{main.estimative.3}
\sum_{N \in \Delta(p)}\sum\limits_{z \in Per(N)}\exp{S_N\left(\dfrac{\phi_m}{m}+\rho\right)(z)} \geq \left[\sum_{x \in E_{\ell}}\exp{S}_{R(x)}\left(\dfrac{\phi_m(x)}{m}\right)\right]^{p} \\
\label{main.estimative.4}
\sum_{N \in \Delta(p)}\sum\limits_{z \in Per(N)}\exp{S_N\left(\dfrac{\phi_m}{m}-\rho\right)(z)} \leq \left[\sum_{x \in E_{\ell}}\exp{S}_{R(x)}\left(\dfrac{\phi_m(x)}{m}\right)\right]^{p} 
\end{eqnarray}
\end{lemma}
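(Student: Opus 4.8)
The plan is to deduce both inequalities from the symbolic coding of the periodic points of the horseshoe $\Omega$ together with the block-wise distortion bound behind Lemma~\ref{lemma.1}. Recall that $\Omega$ is the $f$-invariant saturate of a horseshoe whose branches are the maps $f^{R(x)}\colon S_x\to U_x$, $x\in E_\ell$, and that by the $\delta/4$-shadowing condition (\ref{delta.shadowing.condition}) every $f$-periodic point $z\in\Omega$ is coded by a finite concatenation $[x_0,\dots,x_{p-1}]\in E_\ell^p$ with $N=\sum_{i=0}^{p-1}R(x_i)$ its period, so that $N\in\Delta(p)$; and conversely each concatenation $[x_0,\dots,x_{p-1}]\in E_\ell^p$ determines, as the fixed point of the composed branch $f^{R(x_{p-1})}\circ\cdots\circ f^{R(x_0)}$ in the appropriate cylinder, a periodic point $z=z[x_0,\dots,x_{p-1}]\in Per(N)\cap\Omega$ with $N=\sum_iR(x_i)$. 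Since $z$ is determined by its periodic forward coding, the assignment $[x_0,\dots,x_{p-1}]\mapsto\bigl(N,z[x_0,\dots,x_{p-1}]\bigr)$ is injective and distinct periodic points receive distinct codings.

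Next I would record the distortion estimate along one full return loop. Fix $[x_0,\dots,x_{p-1}]\in E_\ell^p$ with associated periodic point $z$ and period $N$, set $t_k=\sum_{i<k}R(x_i)$, and break the orbit segment $z,fz,\dots,f^{N-1}z$ into the $p$ consecutive sub-blocks of lengths $R(x_0),\dots,R(x_{p-1})$. By (\ref{delta.shadowing.condition}) the $k$-th sub-block $\delta/4$-shadows the orbit segment of $x_k$ of the same length, so (\ref{definition.delta.2}) gives, exactly as in the proof of Lemma~\ref{lemma.1},
$$
\left|S_{R(x_k)}\dfrac{\phi_m}{m}\bigl(f^{t_k}(z)\bigr)-S_{R(x_k)}\dfrac{\phi_m(x_k)}{m}\right|<R(x_k)\rho,\qquad m\in\mathcal{M}_0.
$$
Summing over $k=0,\dots,p-1$, using $S_Ng(z)=\sum_kS_{R(x_k)}g\bigl(f^{t_k}(z)\bigr)$ and $\sum_kR(x_k)=N$, together with $S_N\bigl(\tfrac{\phi_m}{m}\pm\rho\bigr)(z)=S_N\tfrac{\phi_m}{m}(z)\pm N\rho$, and finally exponentiating the two halves of the resulting two-sided bound, I obtain
$$
\prod_{k=0}^{p-1}\exp\left(S_{R(x_k)}\dfrac{\phi_m(x_k)}{m}\right)<\exp\left(S_N\Bigl(\dfrac{\phi_m}{m}+\rho\Bigr)(z)\right),
$$
$$
\exp\left(S_N\Bigl(\dfrac{\phi_m}{m}-\rho\Bigr)(z)\right)<\prod_{k=0}^{p-1}\exp\left(S_{R(x_k)}\dfrac{\phi_m(x_k)}{m}\right).
$$

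For (\ref{main.estimative.3}) I would expand its right-hand side as
$$
\left[\sum_{x\in E_\ell}\exp\left(S_{R(x)}\dfrac{\phi_m(x)}{m}\right)\right]^p=\sum_{[x_0,\dots,x_{p-1}]\in E_\ell^p}\ \prod_{k=0}^{p-1}\exp\left(S_{R(x_k)}\dfrac{\phi_m(x_k)}{m}\right),
$$
dominate each term of this sum by $\exp\bigl(S_N(\tfrac{\phi_m}{m}+\rho)(z[x_0,\dots,x_{p-1}])\bigr)$ via the first displayed inequality, and then use the injectivity of $[x_0,\dots,x_{p-1}]\mapsto(N,z)$ and the positivity of all summands to bound the result by $\sum_{N\in\Delta(p)}\sum_{z\in Per(N)}\exp(S_N(\tfrac{\phi_m}{m}+\rho)(z))$. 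For (\ref{main.estimative.4}) the direction is reversed: to each pair $(N,z)$ with $N\in\Delta(p)$, $z\in Per(N)$ one attaches its coding $[x_0,\dots,x_{p-1}]\in E_\ell^p$ (whence $\sum_kR(x_k)=N$), bounds $\exp(S_N(\tfrac{\phi_m}{m}-\rho)(z))$ above by $\prod_k\exp(S_{R(x_k)}\tfrac{\phi_m(x_k)}{m})$ via the second displayed inequality, and sums; since distinct periodic points carry distinct codings, the total is at most $\sum_{[x_0,\dots,x_{p-1}]\in E_\ell^p}\prod_k\exp(S_{R(x_k)}\tfrac{\phi_m(x_k)}{m})=\bigl[\sum_{x\in E_\ell}\exp(S_{R(x)}\tfrac{\phi_m(x)}{m})\bigr]^p$.

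The step I expect to be the main obstacle is making the correspondence between the periodic points counted in the families $Per(N)$, $N\in\Delta(p)$, and the concatenations in $E_\ell^p$ tight enough for these two counting arguments: separating periodic points from periodic orbits, controlling imprimitive (sub-periodic) concatenations, and exploiting the bound $R(x)\in[n,(1+\rho)n]$ from Lemma~\ref{return.time.lemma} so that the number of branches $p$ is essentially forced by the total period $N$ and so that summing over $Per(N)$ with $N\in\Delta(p)$ matches the $p$-fold product in each direction. Granting that bookkeeping, the remainder — the block decomposition of the Birkhoff sum and the accumulation of the per-unit-time error $\rho$ into the total error $N\rho$ — is routine from Lemma~\ref{lemma.1} and (\ref{delta.shadowing.condition}).
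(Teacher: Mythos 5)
Your argument is essentially the paper's own proof: you use the $\delta/4$-shadowing of each periodic orbit of the horseshoe by a concatenation $[x_0,\dots,x_{p-1}]\in E_\ell^p$, the block-wise estimate from (\ref{definition.delta.2}) (as in Lemma \ref{lemma.1}) to accumulate an error $N\rho$ over the full period, and the correspondence between concatenations and periodic points to compare the double sum with the $p$-fold power. The only difference is that you spell out the injectivity of the coding in each direction, which the paper leaves implicit; the bookkeeping you worry about is harmless since $Per(N)$ consists of points with $f^N(z)=z$, so imprimitive concatenations cause no loss.
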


This follows from (\ref{bounded.variation.phi_m/m.1}) by a shadowing argument. See next section for details.

\begin{lemma}\label{lemma.3}
For every $m \in \mathcal{M}_0$:
\begin{equation}\label{main.estimative.5}
\sum_{x \in E_{\ell}}\exp{S}_{R(x)}\left(\dfrac{\phi_m(x)}{m}\right) \geq \sum_{x \in E_{\ell}}\exp{S}_{n}\left(\dfrac{\phi_m(x)}{m}\right)\exp(-L).
\end{equation}
\begin{equation}\label{main.estimative.6}
\sum_{x \in E_{\ell}}\exp{S}_{R(x)}\left(\dfrac{\phi_m(x)}{m}\right) \leq \sum_{x \in E_{\ell}}\exp{S}_{n}\left(\dfrac{\phi_m(x)}{m}\right) \times \exp(n\rho{L}).
\end{equation}
\end{lemma}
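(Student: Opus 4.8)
The plan is to prove both inequalities by a term‑by‑term comparison, for each individual $x \in E_{\ell}$, of the Birkhoff sum $S_{R(x)}(\phi_m/m)(x) = \sum_{j=0}^{R(x)-1}\phi_m(f^{j}(x))/m$ of the fixed continuous function $\phi_m/m$ along the orbit segment of length $R(x)$ with the one along the orbit segment of length $n$ (recall $n$ is fixed once for all, and $m \in \mathcal{M}_0$ is fixed throughout). First I would invoke Lemma \ref{return.time.lemma}: every $x \in E_{\ell}$ belongs to $E_0 \subset \bigcup_i \Lambda_{0,i}$, hence carries a return time with $n \le R(x) \le (1+\rho)n$. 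In particular $R(x) \ge n$, so $x, f(x), \dots, f^{n-1}(x)$ is an initial segment of $x, f(x), \dots, f^{R(x)-1}(x)$, and therefore
$$
S_{R(x)}\left(\frac{\phi_m}{m}\right)(x) - S_{n}\left(\frac{\phi_m}{m}\right)(x) \;=\; \sum_{j=n}^{R(x)-1}\frac{\phi_m(f^{j}(x))}{m},
$$
a sum of exactly $R(x)-n$ consecutive Birkhoff terms, with $0 \le R(x)-n \le \rho n$ by (\ref{return.time}).

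Next I would bound this difference uniformly. By (\ref{bounds.subadditive.potential.1}) one has $|\phi_m(y)/m| \le L$ for every $y \in M$ (and every index, so no special feature of $\mathcal{M}_0$ is used at this step), whence the displayed difference has absolute value at most $(R(x)-n)L \le n\rho L$; that is,
$$
S_{n}\left(\frac{\phi_m}{m}\right)(x) - n\rho L \;\le\; S_{R(x)}\left(\frac{\phi_m}{m}\right)(x) \;\le\; S_{n}\left(\frac{\phi_m}{m}\right)(x) + n\rho L .
$$
Exponentiating and summing over $x \in E_{\ell}$ then yields the upper estimate (\ref{main.estimative.6}) with the factor $\exp(n\rho L)$ on the right, and symmetrically the lower estimate (\ref{main.estimative.5}): what is really being asserted is a two‑sided multiplicative comparison between $\sum_{x\in E_{\ell}}\exp S_{R(x)}(\phi_m/m)(x)$ and $\sum_{x\in E_{\ell}}\exp S_{n}(\phi_m/m)(x)$ by a positive constant that is independent of $x$ and of $m \in \mathcal{M}_0$, which is exactly what is needed when these bounds are combined with Lemma \ref{lemma.2} and with (\ref{main.estimative.1}).

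I do not anticipate a real obstacle here: this is an elementary length‑times‑supremum estimate. The only two points that require a little attention are (i) that one genuinely uses $R(x) \ge n$, not merely $|R(x)-n| \le \rho n$, so that $S_{n}(\phi_m/m)(x)$ really is the initial block of $S_{R(x)}(\phi_m/m)(x)$ and the difference is a clean block of $R(x)-n$ consecutive terms of the orbit — this is precisely the left endpoint of the return‑time interval produced in Lemma \ref{return.time.lemma}; and (ii) that the constants $L$ and $\rho$ were fixed before $m$ was chosen, so that the resulting comparison is uniform over all $m \in \mathcal{M}_0$, which is what matters in the application.
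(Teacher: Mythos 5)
Your decomposition and the paper's are the same: write $S_{R(x)}(\phi_m/m)(x)=S_n(\phi_m/m)(x)+\sum_{j=n}^{R(x)-1}(\phi_m/m)(f^j(x))$ using $R(x)\ge n$ from Lemma \ref{return.time.lemma}, and control the tail of at most $\rho n$ terms by the uniform bound $L$ from (\ref{bounds.subadditive.potential.1}). Your upper estimate is exactly the paper's and gives (\ref{main.estimative.6}) verbatim. The one discrepancy is in the lower bound: your ``symmetric'' estimate gives the factor $\exp(-n\rho L)$, whereas (\ref{main.estimative.5}) as stated asserts the factor $\exp(-L)$. These are not the same claim: since $n$ is chosen after $\rho$ and satisfies (\ref{definition.n.2}), one has $n\rho\ge\log\#\mathcal{R}$, so typically $n\rho L> L$ and $\exp(-n\rho L)<\exp(-L)$; your bound is strictly weaker than the one in the statement, and pure term-by-term symmetry cannot produce $\exp(-L)$ (take $\phi_m/m\equiv -L$ and $R(x)=(1+\rho)n$ to see that the difference of Birkhoff sums is genuinely of size $n\rho L$, not $L$). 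So, read literally, your argument does not prove (\ref{main.estimative.5}).

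That said, the defect is at least as much in the statement/proof in the paper as in your write-up. The paper's own derivation bounds the tail from below by $\bigl(\min_{x\in E_{\ell}}R(x)-n\bigr)\inf(\phi_m/m)$ and then by $-L$; as a uniform lower bound this is only legitimate when $\inf(\phi_m/m)\ge 0$ (in which case $0$ would already do), and in general the correct uniform bound is precisely yours, $-(R(x)-n)L\ge -n\rho L$. Moreover, either constant is harmless downstream: in the main estimatives the lower factor contributes $-L/\bigl(n(1+\rho)\bigr)$ (controlled by (\ref{definition.n.3})) in the paper's version, and $-\rho L/(1+\rho)$ in yours, and both are $o(1)$ as $\rho\to 0^+$, so the conclusion (\ref{main.1}) of Proposition \ref{main.technical.lemma} is unaffected. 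You should therefore either state and use the lemma with the factor $\exp(-n\rho L)$ (and note that this suffices for the application), or supply an additional argument if the sharper factor $\exp(-L)$ is really wanted; as written, claiming (\ref{main.estimative.5}) ``by symmetry'' is a genuine mismatch with the stated inequality.
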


\begin{lemma}\label{lemma.7}
\begin{equation}\label{main.estimative.9}
\forall \ m \in \mathcal{M}_0: \quad \sum_{x \in E_{\ell}}\exp{S}_{n}\left(\dfrac{\phi_m(x)}{m}\right) \geq \exp(n[P^*_{\mu}(\Phi)-o(1)])
\end{equation}
and
\begin{equation}\label{main.estimative.10}
\forall \ m \in \mathcal{M}_0: \quad \sum_{x \in E_{\ell}}\exp{S}_{n}\left(\dfrac{\phi_m(x)}{m}\right) < \exp(n[P^*_{\mu}(\Phi)+o(1)]).
\end{equation} 
\end{lemma}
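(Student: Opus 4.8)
The plan is to estimate $\sum_{x\in E_{\ell}}\exp S_{n}(\phi_{m}/m)(x)$ by comparing it, orbit point by orbit point, with $\sum_{x\in E_{\ell}}\exp S_{n}\Phi_{\rho}(x)$, for which sharp bounds are already available from the construction, and then to replace $P_{\mu}(\Phi_{\rho})$ by $P^{*}_{\mu}(\Phi)$. Since the uniformity estimates (\ref{uniformity.1})--(\ref{uniformity.2}) only hold for $m\ge N_{0}$, I first replace $\mathcal{M}_{0}$ by the tail $\mathcal{M}_{0}\cap[N_{0},+\infty)$, which is still infinite.

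\emph{Step 1: a pointwise comparison.} I would first show that for every $x\in E_{\ell}$ and every $m\in\mathcal{M}_{0}$,
\[
\left|S_{n}\tfrac{\phi_{m}}{m}(x)-S_{n}\Phi_{\rho}(x)\right|<n\rho(1+4L).
\]
Split $\{0,\dots,n-1\}$ into the indices $j$ with $f^{j}(x)\in\Lambda$ and those with $f^{j}(x)\in\Lambda^{c}$. For the first kind, $\Phi_{\rho}(f^{j}(x))=\Phi(f^{j}(x))$ since $\Phi_{\rho}$ extends $\Phi\mid\Lambda$, and $|\phi_{m}(f^{j}(x))/m-\Phi(f^{j}(x))|<\rho$ by (\ref{uniformity.1}) or (\ref{uniformity.2}); so each such term contributes less than $\rho$. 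For the second kind, $|\phi_{m}(f^{j}(x))/m|\le L$ by (\ref{bounds.subadditive.potential.1}) and $|\Phi_{\rho}(f^{j}(x))|\le L$ by the choice of $\Phi_{\rho}$, so each contributes at most $2L$; and since $x\in\Lambda_{0}$ and $n\ge N_{0}$, there are fewer than $2\rho n$ such indices by (\ref{frecuency.visits.Lambda^c}). Adding up gives the bound, whence, after exponentiating and summing over the fixed set $E_{\ell}$,
\[
e^{-n\rho(1+4L)}\sum_{x\in E_{\ell}}e^{S_{n}\Phi_{\rho}(x)}\ \le\ \sum_{x\in E_{\ell}}e^{S_{n}(\phi_{m}/m)(x)}\ \le\ e^{n\rho(1+4L)}\sum_{x\in E_{\ell}}e^{S_{n}\Phi_{\rho}(x)}.
\]

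\emph{Step 2: the reference sum and the passage to $P^{*}_{\mu}(\Phi)$.} For the upper bound, $E_{\ell}\subset E_{0}$ gives $\sum_{x\in E_{\ell}}e^{S_{n}\Phi_{\rho}(x)}\le\sum_{x\in E_{0}}e^{S_{n}\Phi_{\rho}(x)}<e^{n(P_{\mu}(\Phi_{\rho})+\rho)}$ by (\ref{main.estimative.1}). For the lower bound, $E_{0}\subset\bigcup_{i}(\subR_{i}\cap E_{0})$, so by the maximality (\ref{main.estimative.2}) of $E_{\ell}$ and the cardinality bound $\#\mathcal{R}\le e^{n\rho}$ of (\ref{definition.n.2}) one gets $\sum_{x\in E_{\ell}}e^{S_{n}\Phi_{\rho}(x)}\ge e^{-n\rho}\sum_{x\in E_{0}}e^{S_{n}\Phi_{\rho}(x)}\ge e^{n(P_{\mu}(\Phi_{\rho})-2\rho)}$, again by (\ref{main.estimative.1}). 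Finally, $\Phi_{\rho}$ being continuous, Proposition \ref{free.energy.prop} gives $P_{\mu}(\Phi_{\rho})=h(\mu)+\int\Phi_{\rho}\,d\mu$, and since $\Phi_{\rho}=\Phi$ on $\Lambda$ with $\mu(\Lambda^{c})<\rho$ while $|\Phi_{\rho}|,|\Phi|\le L$, one has $|P_{\mu}(\Phi_{\rho})-P^{*}_{\mu}(\Phi)|\le 2L\rho$. Combining the three displays and setting $o(1):=\rho(1+4L)+2\rho+2L\rho$ yields exactly (\ref{main.estimative.9}) and (\ref{main.estimative.10}).

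\emph{The point to watch} is that this $o(1)$ genuinely tends to $0^{+}$ as $\rho\to0^{+}$ and does not grow with $n$ (which itself depends on $\rho$ and $s$): this holds because $L$ is the fixed constant of (\ref{bounds.subadditive.potential.1}) and every error above is a fixed multiple of $\rho$, never a multiple of $n$. The rest is bookkeeping: passing to the tail of $\mathcal{M}_{0}$ so that (\ref{uniformity.1})--(\ref{uniformity.2}) are available, and observing that the same argument applies verbatim in the subadditive and the superadditive case, since in both $|\phi_{m}/m-\Phi|<\rho$ on $\Lambda$ for $m\ge N_{0}$.
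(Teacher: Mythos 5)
Your proof is correct and takes essentially the same route as the paper: the paper packages your Step 1 as Lemma \ref{lemma.4} and then feeds it, together with (\ref{main.estimative.1}), the maximality (\ref{main.estimative.2}), the count $\#\mathcal{R}\le e^{n\rho}$ from (\ref{definition.n.2}) and the bound $|P_{\mu}(\Phi_{\rho})-P^{*}_{\mu}(\Phi)|\le 2L\rho$, through the intermediate Lemmas \ref{lemma.5} and \ref{lemma.6}, whereas you apply the maximality and the rectangle count directly to the $\Phi_{\rho}$-sums over $E_{\ell}$ before comparing with $\phi_{m}/m$ --- an inessential reordering of the same estimates. Your observation that one should pass to the tail of $\mathcal{M}_{0}$ so that (\ref{uniformity.1})--(\ref{uniformity.2}) apply is a bookkeeping point the paper leaves implicit.
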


Then, as an straightforward consequence of Lemma \ref{lemma.2}, Lemma \ref{lemma.3} and Lemma \ref{lemma.7} we get the 
\ 
\\
\\
{\bf Main estimatives}
$\forall \ m \in \mathcal{M}_0:$
$$
\sum_{N \in \Delta(p)}\sum\limits_{z \in Per(N)}\exp{S_N\left(\dfrac{\phi_m}{m}+\rho\right)(z)}  \geq \left[\exp(n[P^*_{\mu}(\Phi)-o(1)])\times\exp(-L)\right]^p.
$$
and
$$
\sum_{N \in \Delta(p)}\sum\limits_{z \in Per(N)}\exp{S_N\left(\dfrac{\phi_m}{m}-\rho\right)(z)} \leq \left[\exp(n[P^*_{\mu}(\Phi)+o(1)])\times \exp(n\rho{L})\right]^p. 
$$

\begin{proof}{[Proof of Proposition \ref{main.technical.lemma} (\ref{main.1})] using the \bf{main estimatives}}
\ 
\\
We start noting that
\begin{equation}\label{cardinal.admissible.periods}
1 \leq \#\Delta(p) \leq np\rho,
\end{equation}
since
\begin{equation}\label{admissible.periods.bounds}
np \leq N \leq n(1+\rho)p \quad\text{for every}\quad N \in \Delta(p). 
\end{equation}
Moreover,
\begin{equation}\label{counting.cycles}
\dfrac{N}{n(1+\rho)} \leq p \leq \dfrac{N}{n},
\end{equation}
since $R(x_k) \in [n,(1+\rho)n]$ for every $k = 0, \cdots , p-1$. 

Now we use our {\bf main estimatives}. Let $m \in \mathcal{M}_0$.

We first choose a period $N^+_p \in \Delta(p)$ where $N \longmapsto \sum_{z \in Per(N)}\exp(S_N(\phi+\rho)(z))$ attains its maximum 
over the set of admissible periods $N \in \Delta(p)$. Then we get
\begin{eqnarray*}
\#\Delta(p)\sum\limits_{z \in Per(N^+_p)}\exp{S}_{N^+_p}\left(\dfrac{\phi_m}{m}+\rho\right)(z) & \geq & \sum_{N \in \Delta(p)}\sum\limits_{z \in Per(N)}\exp{S_N\left(\dfrac{\phi_m}{m}+\rho\right)(z)}\\
                                                                                               & \geq & \left[\exp(n[P^*_{\mu}(\Phi)-o(1)])\exp(-L)\right]^{p}\\  
                                                                                               & \geq & \left[\exp(n[P^*_{\mu}(\Phi)-o(1)])\exp(-L)\right]^{\frac{N^{+}_p}{(1+\rho)n}},
\end{eqnarray*}
using inequality (\ref{counting.cycles}) to bound from below $p > 0$ in terms of $N^{+}_p$. Then, by (\ref{cardinal.admissible.periods}),
\begin{eqnarray*}
np\rho\times\sum\limits_{z \in Per(N^{+}_p)}\exp{S_{N^{+}_p}\left(\dfrac{\phi_m}{m}+\rho)\right)(z)} \geq  \\
\left[\exp(n[P^*_{\mu}(\Phi)-o(1)])\times\exp(-L)\right]^{\frac{N^{+}_p}{(1+\rho)n}} 
\end{eqnarray*}
Similarly,  minimizing the sums $\sum_{z \in Per(N)}\exp(S_N(\phi-\rho)(z))$ over the set of admissible periods $N \in \Delta(p)$, using again 
(\ref{cardinal.admissible.periods}),we find an admissible period $N^-_p \in \Delta(p)$ with $N^{-}_p \in [np,n(1+\rho)p]$ such that
\begin{eqnarray*}
\sum\limits_{z \in Per(N^{-}_p)}\exp{S_{N^{-}_p}\left(\dfrac{\phi_m}{m}-\rho\right)(z)} \leq \#\Delta(p)\sum\limits_{z \in Per(N^{-}_p)}\exp{S_{N^{-}_p}\left(\dfrac{\phi_m}{m}-\rho\right)(z)}\\
\leq \sum_{N \in \Delta(p)}\sum\limits_{z \in Per(N)}\exp{S_N\left(\dfrac{\phi_m}{m}-\rho\right)(z)} \leq \left[\exp(n[P^*_{\mu}(\Phi)+o(1)])\times \exp(n\rho{L})\right]^{p}\\
\leq \left[\exp(n[P^*_{\mu}(\Phi)+o(1)])\times \exp(n\rho{L})\right]^{\frac{N^{-}_p}{n}}
\end{eqnarray*}
Then, taking logarithms, dividing by $N^{+}_p$ (resp. $N^{+}_p$ ) and passing to the limit as $p \to +\infty$, we get that
$$
P\left(f|\Omega,\dfrac{\phi_m}{m}+\rho\right) \geq \dfrac{P^*_{\mu}(\Phi)-o(1)}{1+\rho} - \dfrac{L}{n(1+\rho)}
$$
and
$$
P\left(f|\Omega,\dfrac{\phi_m}{m}-\rho\right) \leq P^*_{\mu}(\Phi)+o(1) + \rho{L},
$$
for every $m \geq M_0$.
Therefore, as $L/n(1+\rho) < \rho$ by (\ref{definition.n.3}), we have after a straightforward calculation, using $P(\phi + c) = P(\phi)$ ([Theorem 2.1, (vii)]\cite{walters}),
$$
P\left(f|\Omega,\dfrac{\phi_m}{m}\right) \geq \dfrac{P^*_{\mu}(\Phi)-o(1)}{1+\rho}
$$
and
$$
P\left(f|\Omega,\dfrac{\phi_m}{m}\right) \leq P^*_{\mu}(\Phi) + o(1),
$$
for every $m \in \mathcal{M}_0$, so proving (\ref{main.1}) in Proposition \ref{main.technical.lemma}.
\end{proof}

\section{Proofs of the lemmas}\label{proof.lemmas}

\begin{proof}{[Proof of Lemma \ref{defining.Lambda}]}
\ 
\\
Given $\rho > 0$ and $N > 0$ we define, for a subadditive sequence $\phi_n$,
\begin{equation}
X_{N} := \{x \in M: \Phi(x) \leq \dfrac{\phi_m(x)}{m} \leq \Phi(x) + \rho, \quad \forall \ m \geq N\}.
\end{equation}
Moreover, each $X_{N}$ is compact: given a sequence $x_n \in X_{N}$, then 
for every $m \geq N$:
$$
\Phi(x) \leq \dfrac{\phi_m(x)}{m} = \limsup_{n \to +\infty}\dfrac{\phi_m(x_n)}{m} \leq \limsup_{n \to +\infty}\Phi(x_n) + \rho \leq \Phi(x) + \rho,
$$
since $\Phi$ is uppersemicontinuous, therefore $x \in X_{N}$, so each $X_{N}$ is closed, hence compact in $M$.

If $\phi_n$ is superadditive we define
\begin{equation}
X_{N} := \{x \in M: \Phi(x) - \rho \leq \dfrac{\phi_m(x)}{m} \leq \Phi(x), \quad \forall \ m \geq N\}.
\end{equation} 
and prove that it is compact using that $\Phi$ is lowersemicontinuous.

Clearly $X_{N} \subset X_{N+1}$ and $M = \bigcup_{N > 0}X_{N}$ in both cases sub(super)additive and
$$
\mu(X_{N}) \uparrow 1 \quad\text{as}\quad N \to +\infty.
$$

Let $X_{\Phi}$ be a compact subset such that $\Phi | X_{\Phi}$ is continuous with large measure which exists by the Egorov-Lusin theorem. 

Now we find a compact Pesin set $\Sigma$ such that
$$
\Sigma_N := \{x \in \tilde\Lambda: \left|\sum_{k=0}^{n-1}\psi_i(f^k(x)) - \int\psi_i{d\mu} \right| < \rho/2 \quad\forall i \leq s \quad\forall n \geq N\}
$$
has
$$
\mu(\Sigma_N) \to \mu(\Sigma) \quad\text{as}\quad N \to +\infty.
$$ 
Then we choose $X_{\Phi}$ and $\Sigma$ by the Egorov-Lusin theorem such that there exists $N(\Lambda) > 0$ where
$$
\Lambda := \Sigma_N \cap X_{N} \cap X_{\Phi} \quad\text{has}\quad \mu(\Lambda) \geq 1-\rho \quad\forall \ N \geq N(\Lambda).
$$
Let $\Lambda_0 \subset \Lambda$ be a Borel subset with $\mu(\Lambda_0) \geq (1-\rho)\mu(\Lambda)$ 
and $N(frecuency) > 0$ a large integer such that (\ref{frequency.visits.Lambda}) and (\ref{frecuency.visits.Lambda^c}) holds true 
for every $n \geq N(frecuency)$.
This is possible by the ergodicity of $\mu$. Actually,
$$
1 - \rho \leq \lim_{n \to +\infty}\dfrac{\#\{0 \leq j < n: f^j(x) \in \Lambda\}}{n} = \mu(\Lambda) \leq 1 \quad\mu-a.e. \ x \in M.
$$
Then define
$$
Y_N = \left\{x \in M: 1 - 2\rho \leq \dfrac{\#\{0 \leq j < n: f^j(x) \in \Lambda\}}{n} \leq 1 + \rho, \quad\forall \ n \geq N\right\}.
$$
Notice that if $x \in Y_N$ then
$$
\dfrac{\#\{0 \leq j < n: f^j(x) \in \Lambda^c_1\}}{n} \leq 2\rho, \quad\forall \ n \geq N.
$$
As $Y_N \subset Y_{N+1}$ and $M = \bigcup_NY_N$ then $\mu(Y_N) \uparrow 1$ when $N \to +\infty$, therefore we can choose 
$N(frecuency) > 0$ sufficiently large such that 
$$
\mu(\Lambda \cap Y_N) \geq (1-\rho)\mu(\Lambda) \quad\text{for every}\quad N \geq N(frecuency).
$$

We thus define
$$
N_0 := \max\{N(\Lambda), N(frecuency)\}
$$
This completes the proof.
\end{proof}
\begin{remark}
Notice that we can choose any larger $N_0$ with the same set of conditions (\ref{uniformity.1}), 
(\ref{uniformity.2}), (\ref{frequency.visits.Lambda}) and (\ref{frecuency.visits.Lambda^c}) in Lemma \ref{defining.Lambda}.
\end{remark}
\ 
\\
\begin{proof}{[Proof of Lemma \ref{lemma.2}]}
\ 
\\
Let $z \in Per(N)$, $N \in \Delta(p)$ and, by (\ref{rho.shadowing.2}), $[x_0, \cdots , x_{p-1}] \in E^p_{\ell}$ an ordered sequence of points in 
$E_{\ell}$ which successively $\delta$-shadows the orbit of $z$ up to its return time as in (\ref{delta.shadowing.condition}). Then by our 
choice of $\delta$ and $\mathcal{M}_0$ in (\ref{definition.delta.2}), for every $m \in \mathcal{M}_0$,
$$
\left|\dfrac{\phi_m(f^{j+\sum_{k < i}R(x_k)}(z))}{m} - \dfrac{\phi_m(f^j(x_i))}{m} \right| < \rho, \quad\forall \ j = 0, \cdots, R(x_i)-1, \quad\forall \ i = 0, \cdots, p-1,
$$
and we thus get
\begin{eqnarray*}
\left|\sum_{j=0}^{N-1}\dfrac{\phi_m(f^j(z))}{m} - \sum_{i=0}^{p-1}\sum_{j=0}^{R(x_i)-1}\dfrac{\phi_m(f^j(x_i))}{m} \right| \\
= \left|\sum_{i=0}^{p-1}\sum_{j=0}^{R(x_i)-1}\dfrac{\phi_m(f^{j+\sum_{k < i}R(x_k)}(z))}{m} - \sum_{i=0}^{p-1}\sum_{j=0}^{R(x_i)-1}\dfrac{\phi_m(f^j(x_i))}{m} \right| \\
< \sum_{i=0}^{p-1}\sum_{j=0}^{R(x_i)-1}\rho = N\rho, 
\end{eqnarray*}
since $N = \sum_{i=0}^{p-1}R(z_i) \in \Delta(p)$. Therefore, for every $m \in \mathcal{M}_0$,
\begin{eqnarray*}
\sum_{N \in \Delta(p)}\sum\limits_{z \in Per(N)}\exp{S_N}\left(\dfrac{\phi_m}{m}+\rho\right)(z) & \geq & \sum_{[x_0, \cdots , x_{p-1}] \in E^p_{\ell}}\sum_{j=0}^{R(x_i)-1}\dfrac{\phi_m(f^j(x_i))}{m}\\
                                                                                                             &  =   & \left[\sum_{x \in E_{\ell}}\exp{S}_{R(x)}\left(\dfrac{\phi_m(x)}{m}\right)\right]^{p}.
\end{eqnarray*}
Similarly for (\ref{main.estimative.4}).
\end{proof}
\ 
\\
\begin{proof}[Proof of Lemma \ref{lemma.3}]
\ 
\\
As $R(x) \in [n,(1+\rho)n]$ for every $x \in E_0$ then $\min_{x \in E_{\ell}}R(x)-n \geq 0$. So estimative 
(\ref{main.estimative.5}) follows since then 
\begin{eqnarray*}
S_{R(x)}\dfrac{\phi_m}{m}(x) = S_n\dfrac{\phi_m}{m}(x) + \sum_{j = n}^{R(x)-1}\dfrac{\phi_m}{m}(x) \\
\geq S_n\dfrac{\phi_m}{m}(x) + (\min_{x \in E_{\ell}}(R(x)-1-n)\inf\dfrac{\phi_m}{m}\\
\geq S_n\dfrac{\phi_m}{m}(x) - \inf\dfrac{\phi_m}{m}\\
\geq S_n\dfrac{\phi_m}{m}(x) - L
\end{eqnarray*}
by (\ref{bounds.subadditive.potential.1}). In the same manner we prove (\ref{main.estimative.6}) using that $\max_{x \in E_{\ell}}R(x)-n \leq n\rho$ 
since then
\begin{eqnarray*}
S_{R(x)}\dfrac{\phi_m}{m}(x) \leq S_{n}\dfrac{\phi_m}{m}(x) + (\max_{x \in E_{\ell}}R(x)-n)\sup\dfrac{\phi_m}{m} \\
\leq S_{n}\dfrac{\phi_m}{m}(x) + n\rho\sup\dfrac{\phi_m}{m}\\
\leq S_{n}\dfrac{\phi_m}{m}(x) + n\rho{L}
\end{eqnarray*}
\end{proof}

\begin{lemma}\label{lemma.4}
For every $x \in \Lambda_0$, $n \geq N_0$ and $m \in \mathcal{M}_0$,
\begin{equation}\label{variation.phi_m/m-Phi_rho}
\left|S_n\dfrac{\phi_m(x)}{m} - S_n\Phi_{\rho}(x)\right| < no(1)
\end{equation}
\end{lemma}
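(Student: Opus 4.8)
The plan is to expand the quantity $S_n\phi_m(x)/m - S_n\Phi_{\rho}(x)$ directly as a sum along the orbit of $x$ and estimate it one time-step at a time, separating the times at which the orbit visits the Pesin set $\Lambda$ from the (few) times at which it does not. Concretely, write
\[
S_n\dfrac{\phi_m(x)}{m} - S_n\Phi_{\rho}(x) = \sum_{j=0}^{n-1}\left(\dfrac{\phi_m(f^j(x))}{m} - \Phi_{\rho}(f^j(x))\right),
\]
and split the index set $\{0,\dots,n-1\}$ into $A = \{\,j : f^j(x) \in \Lambda\,\}$ and $B = \{\,j : f^j(x) \in \Lambda^c\,\}$.

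On the \emph{good} times $j \in A$ I would use that, by construction, $\Phi_{\rho}$ is a continuous extension of $\Phi\mid\Lambda$, so $\Phi_{\rho}(f^j(x)) = \Phi(f^j(x))$; and then the Kingman-type uniformity built into $\Lambda$ --- inequality (\ref{uniformity.1}) in the subadditive case, (\ref{uniformity.2}) in the superadditive case, valid for every point of $\Lambda$ and every $m \geq N_0$ (in particular for every $m \in \mathcal{M}_0$) --- yields $\bigl|\phi_m(f^j(x))/m - \Phi(f^j(x))\bigr| < \rho$. Hence each term with $j \in A$ has absolute value $< \rho$, and since $\#A \leq n$ their total contribution is at most $n\rho$.

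On the \emph{bad} times $j \in B$ there is no pointwise control of $\Phi_{\rho}$, but only the crude uniform bounds are needed: $|\phi_m(f^j(x))/m| \leq L$ by (\ref{bounds.subadditive.potential.1}), and $|\Phi_{\rho}(f^j(x))| \leq L$ since $\Phi_{\rho}$ was chosen with $\|\Phi_{\rho}\|_{\infty} \leq L$, so each such term is bounded by $2L$. The essential input is that $B$ is small: because $x \in \Lambda_0$ and $n \geq N_0$, the visit-frequency estimate (\ref{frecuency.visits.Lambda^c}) gives $\#B < 2\rho n$, so the bad times contribute at most $4L\rho n$. Adding the two contributions gives
\[
\left|S_n\dfrac{\phi_m(x)}{m} - S_n\Phi_{\rho}(x)\right| < (1 + 4L)\,\rho\, n =: n\, o(1),
\]
with $o(1) = (1+4L)\rho \to 0^+$ as $\rho \to 0^+$, since $L$ is a fixed constant.

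I do not expect any genuine obstacle here: the statement is a bookkeeping estimate, and each ingredient it uses is one of the properties deliberately arranged when $\Lambda$, $\Lambda_0$, $N_0$ and $\Phi_{\rho}$ were chosen. The only points that deserve a moment's attention are that the hypothesis $x \in \Lambda_0$ (rather than merely $x\in\Lambda$) is genuinely used --- it is on $\Lambda_0$ that (\ref{frecuency.visits.Lambda^c}) is available --- and that one should keep in mind $\mathcal{M}_0 \subset \{m \geq N_0\}$, so that (\ref{uniformity.1})/(\ref{uniformity.2}) applies to the relevant indices $m$.
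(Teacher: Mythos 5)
Your proof is correct and follows essentially the same route as the paper: decompose the Birkhoff sum over times in $\Lambda$ (where $\Phi_{\rho}=\Phi$ and the Kingman-type uniformity from Lemma \ref{defining.Lambda} gives a $\rho$-bound per term) and times in $\Lambda^c$ (bounded by $2L$ per term, with cardinality controlled by the visit-frequency estimate for $x\in\Lambda_0$), yielding $n\,o(1)$ with $o(1)$ a multiple of $\rho$.
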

\begin{proof}
Using Lemma \ref{defining.Lambda}, we have that for every $x \in \Lambda_0$,
\begin{eqnarray*}
\left|S_n\dfrac{\phi_m(x)}{m}-S_n\Phi_{\rho}(x)\right| & \leq & \sum_{k=0}^{n-1}\left|\dfrac{\phi_m(f^k(x))}{m}-\Phi_{\rho}(f^k(x))\right|\\
                                                       & \leq & \rho\#\{0 \leq j < n: f^k(x) \in \Lambda\} + 2L\#\{0 \leq j < n: f^k(x) \in \Lambda^c\} \\
                                                              & \leq &  n\rho(1+\rho) + 4n\rho{L}\\
                                                              & \leq & 2n\rho + 4n\rho{L} =  no(1), \quad\forall \ m \in \mathcal{M}_0, \ \forall \ n \geq N_0.
\end{eqnarray*}
\end{proof}

\begin{lemma}\label{lemma.5}
\begin{equation}\label{main.estimative.7}
\forall \ m \in \mathcal{M}_0: \quad \left|\dfrac{1}{n}\log\left(\sum_{x \in E_0}\exp{S_n\dfrac{\phi_m(x)}{m}}\right) - P^*_{\mu}(\Phi)\right| < o(1).
\end{equation} 
\end{lemma}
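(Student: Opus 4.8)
The plan is to combine Lemma \ref{lemma.4}, which controls the difference $\left|S_n\frac{\phi_m(x)}{m} - S_n\Phi_\rho(x)\right|$ pointwise on $\Lambda_0$ for every $m \in \mathcal{M}_0$, with the estimate (\ref{main.estimative.1}) on $E_0$, which says $\left|\frac{1}{n}\log\left(\sum_{x \in E_0}\exp(S_n\Phi_\rho(x))\right) - P_\mu(\Phi_\rho)\right| < \rho$. First I would observe that $E_0 \subset \bigcup_i\Lambda_{0,i} \subset \Lambda_0$, so Lemma \ref{lemma.4} applies to every $x \in E_0$: we get $S_n\Phi_\rho(x) - no(1) < S_n\frac{\phi_m(x)}{m} < S_n\Phi_\rho(x) + no(1)$ for all $x \in E_0$, all $m \in \mathcal{M}_0$, and all $n \geq N_0$. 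Exponentiating and summing over $x \in E_0$, this yields
\begin{equation*}
e^{-no(1)}\sum_{x \in E_0}e^{S_n\Phi_\rho(x)} \;\leq\; \sum_{x \in E_0}e^{S_n\frac{\phi_m(x)}{m}} \;\leq\; e^{no(1)}\sum_{x \in E_0}e^{S_n\Phi_\rho(x)}.
\end{equation*}
Taking $\frac{1}{n}\log$ of all three terms, this gives
\begin{equation*}
\left|\frac{1}{n}\log\left(\sum_{x \in E_0}e^{S_n\frac{\phi_m(x)}{m}}\right) - \frac{1}{n}\log\left(\sum_{x \in E_0}e^{S_n\Phi_\rho(x)}\right)\right| \leq o(1), \quad \forall \, m \in \mathcal{M}_0.
\end{equation*}

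Next I would relate $\frac{1}{n}\log\left(\sum_{x \in E_0}e^{S_n\Phi_\rho(x)}\right)$ to $P^*_\mu(\Phi)$. By (\ref{main.estimative.1}) this quantity is within $\rho$ of $P_\mu(\Phi_\rho)$. It remains to compare $P_\mu(\Phi_\rho)$ with $P^*_\mu(\Phi) = h(\mu) + \int\Phi\,d\mu$. Since $\Phi_\rho$ is a continuous extension of $\Phi|\Lambda$ with $\|\Phi_\rho\|_\infty \leq L$ (and $|\Phi| \leq L$ as well by (\ref{bounds.subadditive.potential.1})), and since $\mu(\Lambda) \geq 1-\rho$, we have $\left|\int\Phi_\rho\,d\mu - \int\Phi\,d\mu\right| \leq \int_{\Lambda^c}|\Phi_\rho - \Phi|\,d\mu \leq 2L\mu(\Lambda^c) \leq 2L\rho$. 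Because $\mu$ is a fixed invariant measure, $P_\mu(\Phi_\rho) = h(\mu) + \int\Phi_\rho\,d\mu$, so $\left|P_\mu(\Phi_\rho) - P^*_\mu(\Phi)\right| \leq 2L\rho$. Combining the three estimates — the $o(1)$ bound from Lemma \ref{lemma.4}, the $\rho$ bound from (\ref{main.estimative.1}), and the $2L\rho$ bound just obtained — and absorbing everything into a single positive function $o(1) \to 0^+$ as $\rho \to 0^+$, we obtain (\ref{main.estimative.7}).

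The only delicate point is bookkeeping: one must check that all the error terms ($\rho$, $2L\rho$, $n\rho(1+\rho)$, $4n\rho L$, etc.) genuinely vanish as $\rho \to 0^+$ with $n \geq N_0(\rho)$ fixed, and that $L$ is a uniform constant independent of $\rho$ — which it is, by (\ref{bounds.subadditive.potential.1}). No new ideas beyond Lemma \ref{lemma.4} and the already established (\ref{main.estimative.1}) are needed; this lemma is essentially a repackaging step that replaces the continuous proxy potential $\Phi_\rho$ by the actual Birkhoff-type averages $\phi_m/m$ over the generating set $E_0$, at the cost of an $o(1)$ error, preparing the ground for Lemma \ref{lemma.7}.
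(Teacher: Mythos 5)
Your proposal is correct and follows essentially the same route as the paper: a triangle inequality through the proxy potential $\Phi_\rho$, using the pointwise bound of Lemma \ref{lemma.4} on $E_0 \subset \Lambda_0$ to compare the sums over $E_0$ for $\phi_m/m$ and $\Phi_\rho$, the choice of $n$ and $E_0$ in (\ref{main.estimative.1}) to compare with $P_\mu(\Phi_\rho)$, and the estimate $\left|P_\mu(\Phi_\rho) - P^*_\mu(\Phi)\right| \leq 2L\mu(\Lambda^c) < 2L\rho$. The bookkeeping of the error terms into a single $o(1)$ is exactly what the paper does.
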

\begin{proof}
Let $m \in \mathcal{M}_0$. Adding and subtracting terms in the left side of the inequality (\ref{main.estimative.7}) we have:
\begin{eqnarray*}
\left|\dfrac{1}{n}\log\left(\sum_{x \in E_0}\exp{S_n\dfrac{\phi_m(x)}{m}}\right) - P^*_{\mu}(\Phi)\right| & \leq & \left|\dfrac{1}{n}\log\left(\sum_{x \in E_0}\exp{S_n\Phi_{\rho}}\right) - P_{\mu}(\Phi_{\rho})\right|  + \\
                                                                                                                                &      & + \left|\dfrac{1}{n}\log\sum_{x \in E_0}\exp\left(S_n\dfrac{\phi_m(x)}{m}\right)-\dfrac{1}{n}\log\sum_{x \in E_0}\exp\left(S_n\Phi_{\rho}(x)\right)\right|\\                                                                                                                                
                                                                                                                                &      & + \left|P_{\mu}(\Phi_{\rho}) - P^*_{\mu}(\Phi)\right|.
\end{eqnarray*}
Notice that
$$
\left|P_{\mu}(\Phi_{\rho}) - P^*_{\mu}(\Phi)\right| = \left|\int\Phi_{\rho}d\mu - \int\Phi{d\mu}\right| \leq 2L\mu(\Lambda^c) < 2L\rho = o(1).
$$
and that
$$
\left|\dfrac{1}{n}\log\left(\sum_{x \in E_0}\exp{S_n\Phi_{\rho}}\right) - P_{\mu}(\Phi_{\rho})\right| < \rho
$$
by the choice of $n$ and $E_0$.
As for the second line in the inequality, we observe that
$$
\dfrac{\sum_{x \in E_0}\exp\left(S_n\dfrac{\phi_m(x)}{m}\right)}{\sum_{x \in E_0}\exp\left(S_n\Phi_{\rho}(x)\right)} \leq \max_{x \in E_0}\dfrac{\exp\left(S_n\dfrac{\phi_m(x)}{m}\right)}{\exp\left(S_n\Phi_{\rho}(x)\right)} \leq e^{no(1)}
$$
by (\ref{variation.phi_m/m-Phi_rho}) in Lemma \ref{lemma.4}.

Therefore, for every $m \in \mathcal{M}_0$ the three terms in the right side are all less than $o(1)$ so proving (\ref{main.estimative.7}).
\end{proof}

\begin{lemma}\label{lemma.6}
For every $m \in \mathcal{M}_0$
\begin{equation}\label{main.estimative.8}
\exp(o(1)n)\sum_{x \in E_{\ell}}\exp{S_n\dfrac{\phi_m(x)}{m}} \geq \sum_{x \in E_{\ell'}}\exp{S_n\dfrac{\phi_m(x)}{m}}, \quad \forall \ \ell' \not= \ell.
\end{equation}
\end{lemma}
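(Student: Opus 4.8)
The plan is to deduce (\ref{main.estimative.8}) from the maximality choice (\ref{main.estimative.2}) of the index $\ell$ --- which is formulated only for the continuous potential $\Phi_{\rho}$ --- by transferring it to the potentials $\phi_m/m$, $m \in \mathcal{M}_0$, through the uniform comparison of Birkhoff sums supplied by Lemma \ref{lemma.4}. The point is that Lemma \ref{lemma.4} lets one replace $S_n\Phi_{\rho}$ by $S_n(\phi_m/m)$ on a point of $\Lambda_0$ at the cost of a multiplicative factor $e^{no(1)}$ after exponentiation, and each $E_{\ell'}$ consists of such points.

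First I would record the inclusion $E_0 \subset \bigcup_i\Lambda_{0,i} \subset \Lambda_0$: the first containment is the way $E_0$ was chosen and the second holds because $\Lambda_{0,i} \subset \subR_i \cap \Lambda_0$ by Lemma \ref{return.time.lemma}. Hence every $x$ in $E_{\ell}$ or in any $E_{\ell'}$ lies in $\Lambda_0$, so Lemma \ref{lemma.4} applies and, after exponentiation, gives the two termwise estimates $\exp\left(S_n\frac{\phi_m(x)}{m}\right) \leq e^{no(1)}\exp\left(S_n\Phi_{\rho}(x)\right)$ and $\exp\left(S_n\Phi_{\rho}(x)\right) \leq e^{no(1)}\exp\left(S_n\frac{\phi_m(x)}{m}\right)$, valid for all $x \in E_0$, $m \in \mathcal{M}_0$ and $n \geq N_0$. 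Summing the first of these over $x \in E_{\ell'}$, then applying (\ref{main.estimative.2}), then summing the second over $x \in E_{\ell}$, I obtain
\begin{eqnarray*}
\sum_{x \in E_{\ell'}}\exp\left(S_n\dfrac{\phi_m(x)}{m}\right) & \leq & e^{no(1)}\sum_{x \in E_{\ell'}}\exp\left(S_n\Phi_{\rho}(x)\right) \\
& \leq & e^{no(1)}\sum_{x \in E_{\ell}}\exp\left(S_n\Phi_{\rho}(x)\right) \\
& \leq & e^{2no(1)}\sum_{x \in E_{\ell}}\exp\left(S_n\dfrac{\phi_m(x)}{m}\right).
\end{eqnarray*}

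Since $2o(1)$ is again a positive quantity tending to $0^+$ as $\rho \to 0^+$, absorbing the factor $2$ into $o(1)$ yields precisely (\ref{main.estimative.8}) for every $m \in \mathcal{M}_0$. The argument is essentially mechanical, and the only point that requires attention is the $o(1)$-bookkeeping: one must verify that the error term produced here depends on $\rho$ alone, uniformly in $m \in \mathcal{M}_0$ and $n \geq N_0$, which is automatic because the estimate in Lemma \ref{lemma.4} already carries that uniformity. I do not expect any genuine obstacle.
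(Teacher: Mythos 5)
Your proposal is correct and follows essentially the same route as the paper: both apply the comparison of Lemma \ref{lemma.4} twice (once on $E_{\ell'}$ and once on $E_{\ell}$, using $E_0\subset\bigcup_i\Lambda_{0,i}\subset\Lambda_0$) with the maximality condition (\ref{main.estimative.2}) for $\Phi_{\rho}$ sandwiched in between, producing the same $e^{2no(1)}$ factor that is absorbed into $o(1)$.
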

\begin{proof}
By (\ref{variation.phi_m/m-Phi_rho}) in Lemma \ref{lemma.4},
$$
S_n\dfrac{\phi_m(x)}{m}-S_n\Phi_{\rho}(x) \geq - o(1)n \quad\forall \ m \in \mathcal{M}_0, \quad\forall \ n \geq N_0,
$$
for every $x \in \Lambda_0$. Hence,
\begin{eqnarray*}
\sum_{x \in E_{\ell}}\exp{S_n\dfrac{\phi_m(x)}{m}} & \geq & \exp(-o(1)n)\sum_{x \in E_{\ell}}\exp{S_n\Phi_{\rho}(x)}\\
                                                   & \geq & \exp(-o(1)n)\sum_{x \in E_{\ell'}}\exp{S_n\Phi_{\rho}(x)}, \quad\forall \ \ell' \not= \ell.                                                   
\end{eqnarray*}
by (\ref{main.estimative.2}) and then, once again, using (\ref{variation.phi_m/m-Phi_rho}), we get
$$
\sum_{x \in E_{\ell'}}\exp{S_n\Phi_{\rho}(x)} \geq \exp(-o(1)n)\sum_{x \in E_{\ell'}}\exp{S_n\dfrac{\phi_m(x)}{m}},
$$
so proving (\ref{main.estimative.8}).
\end{proof}

\begin{proof}[Proof of Lemma \ref{lemma.7}]
\ 
\\
\\
This follows from (\ref{main.estimative.7}) and (\ref{main.estimative.8}) and (\ref{definition.n.2}) in our choice of $n$. Indeed, let $m \in \mathcal{M}_0$, then
\begin{eqnarray*}
\exp(n\rho)\exp(o(1)n)\sum_{x \in E_{\ell}}\exp{S_n\dfrac{\phi_m(x)}{m}}   & \geq & \#{\mathcal R}\exp(o(1)n)\sum_{x \in E_{\ell}}\exp{S_n\dfrac{\phi_m(x)}{m}}\\
\sum_{\ell'}\sum_{x \in E_{\ell'}}\exp{S_n\dfrac{\phi_m(x)}{m}} & \geq & \sum_{x \in E_0}\exp{S_n\dfrac{\phi_m(x)}{m}}\\
                                                                                              & \geq &  \exp(n[P^*_{\mu}(\Phi) - o(1)]) 
\end{eqnarray*}
and thus
\begin{eqnarray*}
\sum_{x \in E_{\ell}}\exp{S_n\dfrac{\phi_m(x)}{m}}  & \geq & \exp(n[P^*_{\mu}(\Phi) - o(1)])\exp(-n\rho)\exp(-o(1)n)\\
                                                    & = & \exp(n[P^*_{\mu}(\Phi) - o(1)]).
\end{eqnarray*}
By the other side,
$$
\sum_{x \in E_{\ell}}\exp{S}_{n}\left(\dfrac{\phi_m(x)}{m}\right) \leq \sum_{x \in E_0}\exp{S}_{n}\left(\dfrac{\phi_m(x)}{m}\right) \leq \exp(n[P^*_{\mu}(\Phi)+o(1)]).                                                                
$$
\end{proof}

\section{Proof of Proposition \ref{limit.pressure.lemma}}\label{proof.limit.pressure.lemma}

Let $f : M \to M$ be a continuous transformation of a compact metric space $(M,d)$ and $\{\phi_n\}$ be a subadditive sequence of continuous functions. We shall suppose that there exists $L > 0$ such that
\begin{equation}\label{bound.rate.growing}
\left|\dfrac{\phi_n(x)}{n}\right| \leq L \quad\mu-a.e. \quad \forall \ \mu \in {\mathcal M}_f.
\end{equation}
Then, by Kingman's theorem there exists an uppersemicontinuous function $\Phi$ such that
\begin{equation}\label{definition.Phi.1}
\Phi = \inf_{n > 0}\dfrac{\phi_n}{n} \quad\mu-a.e. \quad \forall \ \mu \in {\mathcal M}_f
\end{equation} 
In particular, $|\Phi(x)| \leq L$ for $\forall\mu-a.e.$ and $\forall\mu \in {\mathcal M}_f$.

\begin{lemma}\label{lemma.1.1}
Let $\{\phi_n\}$ be a sub(super)additive sequence of continuous functions. Then,
\begin{equation}\label{pressure.sub}
\lim\limits_{n \to +\infty}P\left(\dfrac{\phi_n}{n}\right) = \inf\limits_{n > 0}P\left(\dfrac{\phi_n}{n}\right)
\end{equation}
and
\begin{equation}\label{pressure.sup}
\lim\limits_{n \to +\infty}P\left(\dfrac{\phi_n}{n}\right) = \sup\limits_{n > 0}P\left(\dfrac{\phi_n}{n}\right)
\end{equation}
\end{lemma}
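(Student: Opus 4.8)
The plan is to establish both identities at once, for a sub(super)additive sequence $\{\phi_n\}$ of continuous functions obeying the standing bound $|\phi_n|/n \le L$ in the sense of (\ref{bound.rate.growing}), using nothing beyond monotonicity of the topological pressure and Walters' variational principle (\ref{additive.variational.principle}); in particular no upper semicontinuity of $\mu \mapsto h(\mu)$ and no tempered-variation hypothesis is required. One inequality is free: $\liminf_{n} P(\phi_n/n) \ge \inf_{n > 0} P(\phi_n/n)$ always holds, and dually $\limsup_{n} P(\phi_n/n) \le \sup_{n > 0} P(\phi_n/n)$. Hence the whole content is: in the subadditive case $\limsup_{n} P(\phi_n/n) \le P(\phi_k/k)$ for every fixed $k$, and in the superadditive case $\liminf_{n} P(\phi_n/n) \ge P(\phi_k/k)$ for every $k$; taking $\inf_k$, respectively $\sup_k$, then gives (\ref{pressure.sub}) and (\ref{pressure.sup}).

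The engine is a re-blocking argument. Fix $k \ge 1$ and, for $n \ge k$, write $n = qk + r$ with $q = \lfloor n/k \rfloor$ and $0 \le r < k$ (convention $\phi_0 = 0$). Iterating subadditivity gives $\phi_{qk} \le \sum_{i=0}^{q-1}\phi_k \circ f^{ik}$, and then $\phi_n \le \phi_{qk} + \phi_r \circ f^{qk}$, so that $\phi_n/n \le \psi_{n,k} + (\phi_r \circ f^{qk})/n$, where $\psi_{n,k} := \frac1n\sum_{i=0}^{q-1}\phi_k \circ f^{ik}$. For every $f$-invariant Borel probability $\mu$, invariance gives $\int \psi_{n,k}\,d\mu = \frac{q}{n}\int \phi_k\,d\mu = \frac{qk}{n}\int \frac{\phi_k}{k}\,d\mu$ and $\int \frac{\phi_r \circ f^{qk}}{n}\,d\mu = \frac1n\int \phi_r\,d\mu$; since $|\int \frac{\phi_k}{k}\,d\mu| \le L$, $|\frac1n\int \phi_r\,d\mu| \le kL/n$ and $0 \le 1 - \frac{qk}{n} \le \frac{k}{n}$, the variational principle (\ref{additive.variational.principle}) yields
\[
P\!\left(\frac{\phi_n}{n}\right) = \sup_{\mu \in {\mathcal M}_f}\!\left(h(\mu) + \int \frac{\phi_n}{n}\,d\mu\right) \le \sup_{\mu \in {\mathcal M}_f}\!\left(h(\mu) + \int \psi_{n,k}\,d\mu\right) + \frac{kL}{n} \le P\!\left(\frac{\phi_k}{k}\right) + \frac{2kL}{n}.
\]
Letting $n \to +\infty$ gives $\limsup_n P(\phi_n/n) \le P(\phi_k/k)$ for every $k$, whence $\limsup_n P(\phi_n/n) \le \inf_k P(\phi_k/k) \le \liminf_n P(\phi_n/n)$, which is (\ref{pressure.sub}). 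In the superadditive case every inequality reverses --- superadditivity gives $\phi_n \ge \sum_{i=0}^{q-1}\phi_k \circ f^{ik} + \phi_r \circ f^{qk}$, hence $\phi_n/n \ge \psi_{n,k} + (\phi_r \circ f^{qk})/n$ --- and the same integral identities with (\ref{additive.variational.principle}) give $P(\phi_n/n) \ge P(\phi_k/k) - 2kL/n$; letting $n \to +\infty$ and taking $\sup_k$ yields (\ref{pressure.sup}).

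The one point needing care is purely bookkeeping: the uniform bound on $\phi_n/n$ must be used only through $|\int (\phi_n/n)\,d\mu| \le L$ for $f$-invariant $\mu$, so that the almost-everywhere nature of (\ref{bound.rate.growing}) causes no trouble, and all comparisons of pressures must be routed through integrals against invariant measures. This is exactly what the variational principle supplies, and it is the reason re-blocking $\phi_k$ into $\psi_{n,k}$ --- which changes every such integral by at most $kL/n$ --- leaves the pressure asymptotically unchanged. No structure on $\{\phi_n\}$ beyond sub(super)additivity and the uniform bound enters, which is why, unlike in \cite[Theorem 7.3.1]{barreira.2011}, no upper semicontinuity of $\mu \mapsto h(\mu)$ is needed.
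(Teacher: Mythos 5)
Your proof is correct and follows essentially the same route as the paper's: re-block the larger index into blocks of the fixed smaller one using sub(super)additivity, integrate against invariant measures, and transfer the resulting estimate to pressures through Walters' variational principle, yielding $\limsup \le \inf$ (resp.\ $\liminf \ge \sup$) and hence the existence of the limit. The only cosmetic difference is in the superadditive case, where the paper bounds the pressure difference via $\|\phi_n/n\|_\infty$ while you route everything through integrals against invariant measures, which is if anything slightly cleaner given that the standing bound is only $\mu$-a.e.
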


This proves that, for a superadditive sequence $\{\phi_n\}$ we have
$$
P^*(\Phi) = \sup\limits_{n > 0}P\left(\dfrac{\phi_n}{n}\right).
$$
This follows from Kingman's theorem since
$$
\Phi = \sup_{n > 0}\dfrac{\phi_n}{n}
$$
and therefore
\begin{eqnarray*}
P^*(\Phi) & = & \sup\limits_{\mu \in {\mathcal M}_f}\left\{h(\mu) + \int\Phi{d\mu} \right\}\\
          & = & \sup\limits_{\mu \in {\mathcal M}_f}\left\{h(\mu) + \sup_{n > 0}\int\dfrac{\phi_n}{n}{d\mu} \right\}\\
          & = & \sup\limits_{n > 0}P\left(\dfrac{\phi_n}{n}\right).
\end{eqnarray*}

Hence, to complete the proof of Proposition \ref{limit.pressure.lemma} we are led to prove that, for subadditive sequences we have
\begin{equation}\label{pressure.sub.0}
P^*(\Phi)  =  \inf\limits_{n > 0}P\left(\dfrac{\phi_n}{n}\right).
\end{equation}

For this we first observe that
\begin{equation}\label{pressure.sub.1}
P^*(\Phi)  \leq  \inf\limits_{n > 0}P\left(\dfrac{\phi_n}{n}\right).
\end{equation}
This holds since $P^*(\Phi) \leq P(\phi_n/n)$ for every $n > 0$, using that $\Phi \leq \phi_n/n$ and the additive variational principle.

Therefore, we are going to prove that
\begin{equation}\label{pressure.sub.2}
\forall \ \epsilon > 0: \quad \inf\limits_{n > 0}P\left(\dfrac{\phi_n}{n}\right) < P^*(\Phi) + \epsilon.
\end{equation}

For this we introduce the sets
\begin{equation}\label{definition.M_N}
{\mathcal M}_N = \left\{\nu \in {\mathcal M}_f: \int\dfrac{\phi_n}{n}d\nu < \int\Phi{d\nu} + \epsilon \ \forall n \geq N \right\}
\end{equation}
and denote
\begin{equation}\label{definition.P_N}
P_N(\phi) = \sup\limits_{\nu \in {\mathcal M}_N}\left\{h(\nu) +  \int\phi{d\nu} \right\}.
\end{equation}
Notice that ${\mathcal M}_{N} \subset {\mathcal M}_{N+1}$ and that, by Kingman's theorem
$$
\bigcup_N{\mathcal M}_N = {\mathcal M}_f.
$$
Indeed, as
$$
\int\Phi{d\nu} = \inf\limits_{n > 0}\int\dfrac{\phi_n}{n}d\nu, \quad\forall \ \nu \in {\mathcal M}_f, 
$$
then, for every $\nu \in {\mathcal M}_f$ there exists $N = N(\epsilon,\nu)$ such that
$$
\int\dfrac{\phi_n}{n}d\nu < \int\Phi{d\nu} + \epsilon, \quad\forall n \geq N, 
$$
that is, $\nu \in {\mathcal M}_{N}$. By the definition of ${\mathcal M}_N$, we have that, given $N > 0$ then for every $\nu \in {\mathcal M}_N$,
$$
h(\nu) + \int\dfrac{\phi_n}{n}d\nu < P^*(\Phi) + \epsilon, \quad\forall n \geq N,
$$
and then $P_N\left(\dfrac{\phi_n}{n}\right) \leq P^*(\Phi) + \epsilon$, for every $n \geq N$ so that
$$
\forall \ N > 0: \quad \inf\limits_{n > 0}P_N\left(\dfrac{\phi_n}{n}\right) \leq P^*(\Phi) + \epsilon.
$$ 
We thus have that, for every $\epsilon > 0$,
\begin{equation}\label{pressure.sub.3}
\sup_{N > 0}\inf\limits_{n > 0}P_N\left(\dfrac{\phi_n}{n}\right) \leq P^*(\Phi) + \epsilon.
\end{equation}
Therefore, to prove (\ref{pressure.sub.1}) it is sufficient to show that
\begin{equation}\label{pressure.sub.4}
\sup\limits_{N > 0}\inf\limits_{n > 0}P_N\left(\dfrac{\phi_n}{n}\right) = \inf\limits_{n > 0}P\left(\dfrac{\phi_n}{n}\right).
\end{equation}

\begin{lemma}\label{lemma.1.2}
For every $N > 0$
\begin{equation}\label{pressure.sub.5}
\lim\limits_{n \to +\infty}P_{N}\left(\dfrac{\phi_n}{n}\right) = \inf\limits_{n > 0}P_{N}\left(\dfrac{\phi_n}{n}\right)
\end{equation}
\end{lemma}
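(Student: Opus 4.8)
The plan is to recognize the right-hand side of (\ref{pressure.sub.5}) as a Fekete limit. Fix $N>0$. If $\mathcal{M}_N=\emptyset$ then $P_N(\cdot)$ is a supremum over the empty set, so both sides equal $-\infty$ and there is nothing to prove; assume therefore $\mathcal{M}_N\neq\emptyset$. I would set
$$
c_n:=nP_N\!\left(\dfrac{\phi_n}{n}\right)=\sup_{\nu\in\mathcal{M}_N}\left\{nh(\nu)+\int\phi_n\,d\nu\right\},
$$
and first record that each $c_n$ is a finite real number with $c_n/n\ge -L$: for any fixed $\nu_0\in\mathcal{M}_N$ one has $c_n\ge\int\phi_n\,d\nu_0\ge -nL$ by (\ref{bound.rate.growing}), while $c_n\le nP(\phi_n/n)<+\infty$ since $\phi_n/n$ is continuous and $M$ is compact.

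The heart of the argument is to prove that $\{c_n\}$ is subadditive. Starting from the subadditivity $\phi_{n+m}\le\phi_n+\phi_m\circ f^n$ and using the $f$-invariance of every $\nu\in\mathcal{M}_f$, I would obtain $\int\phi_{n+m}\,d\nu\le\int\phi_n\,d\nu+\int\phi_m\,d\nu$, hence for every $\nu\in\mathcal{M}_N$
$$
(n+m)h(\nu)+\int\phi_{n+m}\,d\nu\le\left[nh(\nu)+\int\phi_n\,d\nu\right]+\left[mh(\nu)+\int\phi_m\,d\nu\right]\le c_n+c_m .
$$
Taking the supremum over $\nu\in\mathcal{M}_N$ then gives $c_{n+m}\le c_n+c_m$. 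I expect this to be the only genuinely load-bearing step, and its subtlety is precisely that $\mathcal{M}_N$ is the \emph{same} set of invariant measures for every index $n$: this is what allows a single measure $\nu$ to bound both bracketed terms simultaneously, and hence subadditivity to survive the passage to the suprema. Had the constraint set been allowed to depend on $n$ the estimate would break down.

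Finally I would invoke Fekete's subadditive lemma: a subadditive sequence $\{c_n\}$ bounded below by $-nL$ satisfies $\lim_{n\to+\infty}c_n/n=\inf_{n>0}c_n/n$, the common value being finite. Since $c_n/n=P_N(\phi_n/n)$ by construction, this is exactly (\ref{pressure.sub.5}). The remaining items — the empty-$\mathcal{M}_N$ bookkeeping and the finiteness bounds above — are routine; notably the whole argument uses no regularity of $\mu\mapsto h(\mu)$, in particular no upper semicontinuity of the entropy.
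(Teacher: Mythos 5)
Your proof is correct, and it reaches the conclusion by a somewhat different and cleaner route than the paper. The paper proves this lemma by simply repeating the proof of Lemma \ref{lemma.1.1}: writing $m=np+q$, using subadditivity together with the uniform bound $\|\phi_n\|_{\infty}\leq nL$ to obtain the quantitative estimate $\int\frac{\phi_m}{m}d\nu\leq\int\frac{\phi_n}{n}d\nu+\epsilon$ for all $m\geq nL/\epsilon$ and all invariant $\nu$ (inequality (\ref{1.1.1})), then, because the supremum defining $P_N$ is taken over the fixed set $\mathcal{M}_N\subset\mathcal{M}_f$, deducing $P_N(\phi_m/m)\leq P_N(\phi_n/n)+\epsilon$ for such $m$ and concluding by the $\limsup\leq\inf\leq\liminf$ squeeze. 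You instead note that $c_n:=nP_N(\phi_n/n)$ is \emph{exactly} subadditive — the point you correctly flag as load-bearing is that the same $\nu\in\mathcal{M}_N$ bounds both bracketed terms, which works precisely because the constraint set does not depend on $n$ — and then invoke Fekete's lemma. Your version avoids the block decomposition and the threshold $m\geq nL/\epsilon$; the bound $L$ enters only in your finiteness remarks, which are in fact dispensable since Fekete gives $\lim c_n/n=\inf_n c_n/n$ in $[-\infty,+\infty)$ without a lower bound (and your claim $c_n\leq nP(\phi_n/n)<+\infty$ tacitly assumes finite topological pressure, automatic for the paper's diffeomorphisms but not for an arbitrary continuous map, where the statement is anyway trivial if some $\nu\in\mathcal{M}_N$ has infinite entropy). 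What the paper's formulation buys in exchange for its heavier bookkeeping is the explicit inequality (\ref{1.1.1}), uniform in $\nu$, which is reused later in the proofs of Lemma \ref{lemma.1.5} and Lemma \ref{lemma.1.4}; your argument establishes the present lemma but does not produce that auxiliary estimate.
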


\begin{lemma}\label{lemma.1.3}
\begin{equation}\label{pressure.sub.6}
P\left(\dfrac{\phi_n}{n}\right) = \lim_{N \to +\infty}P_N\left(\dfrac{\phi_n}{n}\right) = \sup_{N > 0}P_N\left(\dfrac{\phi_n}{n}\right).
\end{equation}
Moreover,
\begin{equation}\label{pressure.sub.6.1}
P^*(\Phi) = \lim_{N \to +\infty}P^*_N(\Phi) = \sup_{N > 0}P^*_N(\Phi).
\end{equation}
\end{lemma}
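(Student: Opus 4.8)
The plan is to derive both equalities of Lemma~\ref{lemma.1.3} from a single soft observation: if $g : {\mathcal M}_f \to [-\infty,+\infty]$ is any function and ${\mathcal M}_f = \bigcup_{N>0}{\mathcal M}_N$ is an increasing exhaustion, then $N \mapsto \sup_{\nu \in {\mathcal M}_N} g(\nu)$ is nondecreasing and
\begin{equation*}
\lim_{N \to +\infty}\ \sup_{\nu \in {\mathcal M}_N} g(\nu) \;=\; \sup_{N>0}\ \sup_{\nu \in {\mathcal M}_N} g(\nu) \;=\; \sup_{\nu \in {\mathcal M}_f} g(\nu),
\end{equation*}
simply because every $\nu \in {\mathcal M}_f$ lies in some ${\mathcal M}_N$. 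The inclusions ${\mathcal M}_N \subseteq {\mathcal M}_{N+1}$ and the identity $\bigcup_N {\mathcal M}_N = {\mathcal M}_f$ have already been recorded above (the latter via Kingman's theorem), so proving the lemma amounts to recognizing, in each of the two cases, which supremum $\sup_{\nu \in {\mathcal M}_f} g(\nu)$ appears.

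For the first display I would fix $n > 0$ and take $g(\nu) = h(\nu) + \int (\phi_n/n)\,d\nu$. Since $\phi_n/n$ is continuous, the additive variational principle~(\ref{additive.variational.principle}) gives $\sup_{\nu \in {\mathcal M}_f} g(\nu) = P(\phi_n/n)$, while $\sup_{\nu \in {\mathcal M}_N} g(\nu) = P_N(\phi_n/n)$ by the definition~(\ref{definition.P_N}). The observation above then yields $P(\phi_n/n) = \lim_{N} P_N(\phi_n/n) = \sup_{N} P_N(\phi_n/n)$, which is~(\ref{pressure.sub.6}).

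For the second display I would apply the same observation to $g(\nu) = h(\nu) + \int \Phi\,d\nu$, which is well defined because $\Phi$ is $\mu$-integrable for every $\mu \in {\mathcal M}_f$ (as recorded after the definition of ${\mathcal S}^+$). Here no variational principle is needed: $P^*(\Phi) = \sup_{\nu \in {\mathcal M}_f} g(\nu)$ is the very definition~(\ref{variational.pressure}), and $P^*_N(\Phi) = \sup_{\nu \in {\mathcal M}_N} g(\nu)$. Hence $P^*(\Phi) = \lim_{N} P^*_N(\Phi) = \sup_{N} P^*_N(\Phi)$, which is~(\ref{pressure.sub.6.1}).

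There is no genuine obstacle in this lemma. All the analytic substance --- Kingman's subadditive ergodic theorem and the Egorov--Lusin truncation --- has already been absorbed into the exhaustion $\bigcup_N {\mathcal M}_N = {\mathcal M}_f$, and what is left is the elementary fact that a supremum commutes with an increasing union (which remains valid when $g$ takes the value $+\infty$, a situation that does not arise in the manifold setting of the applications since the topological entropy is finite there). The one point meriting care is that $\Phi$ is merely measurable, not continuous, which is exactly why the second display must be argued from the definition of $P^*$ rather than from~(\ref{additive.variational.principle}). Finally, Lemma~\ref{lemma.1.3} is meant to be combined with Lemma~\ref{lemma.1.2} and with the sandwich $P^*_N(\Phi) \le \inf_{n>0} P_N(\phi_n/n) \le P^*_N(\Phi) + \epsilon$ --- itself immediate from $\Phi \le \phi_n/n$ and the definition~(\ref{definition.M_N}) of ${\mathcal M}_N$ --- to establish~(\ref{pressure.sub.4}) and thereby complete the proof of Proposition~\ref{limit.pressure.lemma}.
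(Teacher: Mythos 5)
Your proposal is correct and follows essentially the same route as the paper: monotonicity of $P_N$ coming from ${\mathcal M}_N \subseteq {\mathcal M}_{N+1}$, the exhaustion $\bigcup_N {\mathcal M}_N = {\mathcal M}_f$, and the additive variational principle~(\ref{additive.variational.principle}) to identify $P(\phi_n/n)$ with the full supremum, while $P^*(\Phi)$ is already a supremum by definition~(\ref{variational.pressure}). The only cosmetic difference is that the paper spells out the ``supremum over an increasing union'' step by picking an $\epsilon$-almost-maximizing measure $\nu$ and an $N_0$ with $\nu \in {\mathcal M}_N$ for $N \geq N_0$, whereas you quote it as a general soft fact.
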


Therefore, (\ref{pressure.sub.4}) will be established by interchanging the order of the limits. This follows from

\begin{lemma}\label{lemma.1.4}
$\{P_N(\phi_n/n)\}$ is a Cauchy sequence in $(n,N)$. 
\end{lemma}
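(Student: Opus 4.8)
The plan is to view the statement as the assertion that $N\mapsto P_{N}(\phi_{n}/n)$ converges \emph{uniformly in }$n$, which is exactly what an interchange of the limits $\lim_{N}$ and $\lim_{n}$ requires and is equivalent to the displayed sequence being Cauchy uniformly in $n$. Put $a_{n,N}:=P_{N}(\phi_{n}/n)$. Since ${\mathcal M}_{N}\subset{\mathcal M}_{N+1}$, the array $a_{n,N}$ is non-decreasing in $N$, and so is $b_{N}:=\inf_{n}a_{n,N}$; by Lemma~\ref{lemma.1.2}, $\lim_{n}a_{n,N}=b_{N}$ for each $N$; by Lemma~\ref{lemma.1.3}, $\lim_{N}a_{n,N}=\sup_{N}a_{n,N}=P(\phi_{n}/n)$ for each $n$; and by Lemma~\ref{lemma.1.1}, $\lim_{n}P(\phi_{n}/n)=\inf_{n}P(\phi_{n}/n)=:C$. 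If one proves $\varepsilon_{N}:=\sup_{n>0}\bigl(P(\phi_{n}/n)-P_{N}(\phi_{n}/n)\bigr)\to 0$, then $b_{N}\ge C-\varepsilon_{N}$, hence $\sup_{N}\inf_{n}P_{N}(\phi_{n}/n)=\sup_{N}b_{N}\ge C$; the reverse inequality is trivial because $P_{N}\le P$, so (\ref{pressure.sub.4}) follows. Combined with (\ref{pressure.sub.3}) this gives $\inf_{n}P(\phi_{n}/n)\le P^{*}(\Phi)+\epsilon$, and then (\ref{pressure.sub.1}) together with $\epsilon\downarrow 0$ yields $P^{*}(\Phi)=\inf_{n}P(\phi_{n}/n)=\lim_{n}P(\phi_{n}/n)$, i.e. Proposition~\ref{limit.pressure.lemma}. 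So the entire matter reduces to the uniform bound $\varepsilon_{N}\to 0$.

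I would first record the cheap half of the estimate. If $\nu\in{\mathcal M}_{N}$ then, by Kingman's theorem, $\Phi\le\phi_{m}/m$ $\nu$-a.e. for every $m$, while (\ref{definition.M_N}) gives the reverse inequality up to $\epsilon$ for $m\ge N$; hence $0\le\int\phi_{m}/m\,d\nu-\int\Phi\,d\nu<\epsilon$ for every $m\ge N$ and every $\nu\in{\mathcal M}_{N}$. Setting $P^{*}_{N}(\Phi):=\sup_{\nu\in{\mathcal M}_{N}}\{h(\nu)+\int\Phi\,d\nu\}$, this yields $P^{*}_{N}(\Phi)\le P_{N}(\phi_{n}/n)<P^{*}_{N}(\Phi)+\epsilon$ for every $n\ge N$, while $P^{*}_{N}(\Phi)\uparrow P^{*}(\Phi)$ by (\ref{pressure.sub.6.1}). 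Thus on the range $n\ge N$ the defect $P(\phi_{n}/n)-P_{N}(\phi_{n}/n)$ is as small as $P(\phi_{n}/n)$ happens to be close to $P^{*}(\Phi)$ for large $n$ --- but that is exactly what we are trying to prove, so this observation alone is circular, and the real content of the lemma is the complementary range $N>n$, where restricting the supremum from ${\mathcal M}_{f}$ to ${\mathcal M}_{N}$ may a priori lower it by a definite amount.

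For that range I would argue as follows. Fix $\eta>0$; for each $n$ choose, by the variational principle (\ref{additive.variational.principle}), a measure $\nu_{n}\in{\mathcal M}_{f}$ with $h(\nu_{n})+\int\phi_{n}/n\,d\nu_{n}>P(\phi_{n}/n)-\eta$. Then $P(\phi_{n}/n)-P_{N}(\phi_{n}/n)<\eta+\bigl(h(\nu_{n})+\int\phi_{n}/n\,d\nu_{n}\bigr)-P_{N}(\phi_{n}/n)$, so it is enough to exhibit a single $N=N(\eta)$, independent of $n$, and measures $\tilde\nu_{n}\in{\mathcal M}_{N}$ with $h(\tilde\nu_{n})+\int\phi_{n}/n\,d\tilde\nu_{n}\ge h(\nu_{n})+\int\phi_{n}/n\,d\nu_{n}-\eta$. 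Here weak-$*$ compactness of ${\mathcal M}_{f}$, the continuity of each $\nu\mapsto\int\phi_{m}/m\,d\nu$, and the subadditive inequality $\int\phi_{km}/(km)\,d\nu\le\int\phi_{m}/m\,d\nu$ (which makes membership in ${\mathcal M}_{N}$ stable along a multiplicative subsequence $m\in\{m_{0},2m_{0},4m_{0},\dots\}$ and lets the slack $\epsilon$ in (\ref{definition.M_N}) be absorbed) must be used: one disassembles $\nu_{n}$ by its ergodic decomposition, notes that $\pi_{n}$-almost every component lies in $\bigcup_{k}{\mathcal M}_{k}$, and extracts along such a subsequence, uniformly over the compact set ${\mathcal M}_{f}$, a level $N$ beyond which components of near-maximal free energy already sit inside ${\mathcal M}_{N}$. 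I expect this last step to be the main obstacle: it is precisely the place where one must avoid invoking upper semicontinuity of $\mu\mapsto h(\mu)$ --- or of $\nu\mapsto\int\Phi\,d\nu$, which is only upper semicontinuous because $\Phi=\inf_{n}\phi_{n}/n$ is an infimum of continuous functions --- and that is exactly the difficulty the auxiliary family $\{{\mathcal M}_{N}\}$ and Lemmas~\ref{lemma.1.1}--\ref{lemma.1.3} were set up to circumvent.
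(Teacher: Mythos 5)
Your proposal does not prove the lemma; the gap sits exactly at the step you yourself flag as the main obstacle. After reducing the statement to the uniform bound $\varepsilon_N=\sup_{n>0}(P(\phi_n/n)-P_N(\phi_n/n))\to 0$ and disposing of the range $n\ge N$ by the sandwich $P^*_N(\Phi)\le P_N(\phi_n/n)<P^*_N(\Phi)+\epsilon$ (immediate from (\ref{definition.M_N})) together with $P^*_N(\Phi)\uparrow P^*(\Phi)$, everything hinges on producing, for the range $N>n$, a single level $N(\eta)$ independent of $n$ such that near-maximizing measures (or the ergodic components carrying most of their free energy) already lie in $\mathcal{M}_{N(\eta)}$. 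The mechanism you indicate --- weak-$*$ compactness of $\mathcal{M}_f$, continuity of $\nu\mapsto\int\phi_m/m\,d\nu$, and monotonicity along a multiplicative subsequence --- cannot deliver this: it is a Dini-type argument, and Dini requires the limit function to be continuous, whereas $\nu\mapsto\int\Phi\,d\nu$ is only upper semicontinuous; uniform convergence of $\int\phi_m/m\,d\nu$ to $\int\Phi\,d\nu$ on a compact set of measures would force that limit to be continuous there, which is precisely what cannot be assumed, and the rate of convergence in Kingman's theorem genuinely depends on the measure. The ergodic decomposition does not repair this: knowing that almost every component lies in some $\mathcal{M}_k$ gives no control on $k$ uniformly over components and over $n$. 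So the crucial uniformity claim is unsupported, and with it the entire second half of your argument.

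For comparison: the estimate you derive and then discard as ``circular'' is, in substance, the paper's whole proof. Lemma \ref{lemma.1.5} is the same sandwich (with $2\epsilon$, obtained through (\ref{1.1.1}) on the range $m\ge nL/\epsilon$, $n\ge N$) combined with the Cauchy property of $N\mapsto P_N(\Phi)$ coming from (\ref{pressure.sub.6.1}); Lemma \ref{lemma.1.4} then follows by adding the almost-monotonicity in the time index, $P_N(\phi_m/m)\le P_N(\phi_n/n)+\epsilon$ for $m\ge nL/\epsilon$ uniformly in $N$, which yields the estimate (\ref{8}) on the region $n\ge M\ge N\ge N_0$, $m\ge nL/\epsilon$. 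The paper thus never claims, and never proves, control in your regime $N>n$: it only compares pairs whose time index dominates the index of the class $\mathcal{M}_N$. Your remark that such restricted control does not by itself bound $P(\phi_n/n)=\sup_N P_N(\phi_n/n)$ for a fixed $n$, and hence does not obviously justify the interchange (\ref{pressure.sub.4}), is a pertinent criticism of how Lemma \ref{lemma.1.4} is later invoked; but it is a criticism, not a proof, and by aiming at the stronger uniform statement you end up establishing less than the paper does --- you never even assemble the restricted Cauchy estimate that constitutes the lemma as the paper proves and uses it.
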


\begin{proof}{\em[Proof of Proposition \ref{limit.pressure.lemma}]}
We first observe that (\ref{pressure.sub.1}) and (\ref{pressure.sub.2}) proves (\ref{pressure.sub.0}). To prove (\ref{pressure.sub.2}) 
we observe that, by Lemma \ref{lemma.1.4} the limits in (\ref{pressure.sub.5}) and (\ref{pressure.sub.6}) can be interchanged, that is, 
\begin{equation}
\begin{array}{ccc}
\sup\limits_{N > 0}\inf\limits_{n > 0}P_N\left(\dfrac{\phi_n}{n}\right) & = & \lim\limits_{N \to +\infty}\lim\limits_{n \to +\infty}P_N\left(\dfrac{\phi_n}{n}\right)\\
= \lim\limits_{n \to +\infty}\lim\limits_{N \to +\infty}P_N\left(\dfrac{\phi_n}{n}\right) & = & \inf\limits_{n > 0}P\left(\dfrac{\phi_n}{n}\right).
\end{array}
\end{equation}
We use this and (\ref{pressure.sub.3}) to prove (\ref{pressure.sub.2}).
\end{proof}

\subsection{Proof of the lemmas}

\begin{proof}[Proof of Lemma \ref{lemma.1.1}]
 Let $\{\phi_n\}$ be a subadditive continuous potential. Using subadditivity and letting $m = np + q$ for some $0 \leq q < n$, we have
$$
\phi_m \leq \phi_{np}\circ f^q + \phi_q \leq \sum_{k=0}^{p-1}\phi_n \circ f^{q + kn} + qL
$$
since $\|\phi_n\| \leq nL$ by assumption. Therefore, for every $f$-invariant Borel probability $\mu \in {\mathcal M}_f$
\begin{eqnarray*}
\int\dfrac{\phi_m}{m}d\mu & \leq &  \dfrac{1}{m}\int\sum_{k=0}^{p-1}\phi_n \circ f^{q + kn}d\mu + \dfrac{qL}{m} \\
                                         & \leq &  \dfrac{np}{m}\int\dfrac{\phi_n}{n}d\mu + \dfrac{nL}{m}\\
                                         & \leq & \int\dfrac{\phi_n}{n}d\mu + \epsilon \quad\forall \ m \geq \dfrac{nL}{\epsilon}.
\end{eqnarray*}
Therefore,
\begin{equation}\label{1.1.1}
\int\dfrac{\phi_m}{m}d\mu  \leq \int\dfrac{\phi_n}{n}d\mu + \epsilon \quad\forall \ m \geq \dfrac{nL}{\epsilon}, \quad\forall \ \nu \in {\mathcal M}_f, 
\end{equation}
Then, substituting into (\ref{additive.variational.principle}), the variational equation for topological pressure, we get
$$
P\left(\dfrac{\phi_m}{m}\right) \leq P\left(\dfrac{\phi_n}{n}\right) + \epsilon \quad\text{for every}\quad m \geq \dfrac{nL}{\epsilon}. 
$$
Therefore,
$$
\limsup_{m \to +\infty}P\left(\dfrac{\phi_m}{m}\right) \leq P\left(\dfrac{\phi_n}{n}\right), \quad\text{for every}\quad n > 0.
$$
since $\epsilon > 0$ is arbitrary. Hence,
$$
\limsup_{m \to +\infty}P\left(\dfrac{\phi_m}{m}\right) \leq  \inf\limits_{n > 0}P\left(\dfrac{\phi_n}{n}\right)
                                                                \leq  \liminf_{m \to +\infty}P\left(\dfrac{\phi_m}{m}\right)
$$
concluding that the limit exists and
$$
\lim_{n \to +\infty}P\left(\dfrac{\phi_m}{m}\right) = \inf\limits_{n > 0}P\left(\dfrac{\phi_n}{n}\right).
$$

The superadditive case follows from similar arguments. We start remarking that
$$
\phi_m \geq \phi_{np}\circ f^q + \phi_q \geq \sum_{k=0}^{p-1}\phi_n \circ f^{q + kn} - qL,
$$
and then that
$$
\int\dfrac{\phi_m}{m}d\mu \geq \int\dfrac{np}{m}\dfrac{\phi_n}{n}d\mu -\dfrac{nL}{m} \quad\forall \ \mu \in {\mathcal M}_f.
$$
This proves
$$
P\left(\dfrac{\phi_m}{m}\right)  \geq  P\left(\dfrac{np}{m}\dfrac{\phi_n}{n}\right) - \dfrac{nL}{m}
$$
and then, 
\begin{eqnarray*}
P\left(\dfrac{\phi_m}{m}\right) - P\left(\dfrac{\phi_n}{n}\right) & \geq & P\left(\dfrac{np}{m}\dfrac{\phi_n}{n}\right)- P\left(\dfrac{\phi_n}{n}\right) - \dfrac{nL}{m}\\
                                                                  & \geq & -\left(\dfrac{np}{m}-1\right)\left\|\dfrac{\phi_n}{n}\right\|_{\infty} - \dfrac{nL}{m}\\
                                                                  & > & -\epsilon
\end{eqnarray*}
for every $m \geq M$, for a suitable $M$.
Therefore,
$$
\liminf_{m \to +\infty}P\left(\dfrac{\phi_m}{m}\right) \geq \sup_{n > 0}P\left(\dfrac{\phi_n}{n}\right) \geq \limsup_{n \to +\infty}P\left(\dfrac{\phi_n}{n}\right),
$$
proving that that the limit exists and is
$$
\lim_{n \to +\infty}P\left(\dfrac{\phi_n}{n}\right) = \sup_{n > 0}P\left(\dfrac{\phi_n}{n}\right).
$$
\end{proof}

\begin{proof}[Proof of Lemma \ref{lemma.1.2}]
Is the same as the proof of Lemma \ref{lemma.1.1}.
\end{proof}

\begin{proof}[Proof of Lemma \ref{lemma.1.3}]
As ${\mathcal M}_N \subset {\mathcal M}_{N+1}$ then $P_{N}(\phi_n/n) \leq P_{N+1}(\phi_n/n)$. Therefore, 
\begin{equation}\label{3.2}
\lim_{N \to +\infty}P_{N}\left(\dfrac{\phi_n}{n}\right) = \sup_{N > 0}P_{N}\left(\dfrac{\phi_n}{n}\right) \leq P\left(\dfrac{\phi_n}{n}\right).
\end{equation}
Given $\epsilon > 0$ there exists $\nu \in {\mathcal M}_f$ such that
$$
P\left(\dfrac{\phi_n}{n}\right) - \epsilon < h(\nu) +  \int\dfrac{\phi_n}{n}{d\nu}.
$$
Then we find $N_0 > 0$ such that $\nu \in {\mathcal M}_N$ for every $N \geq N_0$ and
$$
\forall \ N \geq N_0: \quad P\left(\dfrac{\phi_n}{n}\right) - \epsilon < \sup_{\nu \in {\mathcal M}_N}\left\{h(\nu) +  \int\dfrac{\phi_n}{n}d\nu \right\} \leq P\left(\dfrac{\phi_n}{n}\right).
$$
Therefore,
$$
P\left(\dfrac{\phi_n}{n}\right) - \epsilon < \sup_{N > 0}P_{N}\left(\dfrac{\phi_n}{n}\right) \leq P\left(\dfrac{\phi_n}{n}\right).
$$
This proves (\ref{pressure.sub.6}) since $\epsilon > 0$ is arbitrary.

The proof for the variational pressure $P^*(\Phi)$ is completely similar.
\end{proof}

To prove Lemma \ref{lemma.1.4} we need the following

\begin{lemma}\label{lemma.1.5}
There exists $N_0 >> 1$ large enough such that,
\begin{equation}\label{1.5}
\left|P_N\left(\dfrac{\phi_m}{m}\right) - P_M\left(\dfrac{\phi_m}{m}\right)\right| < 3\epsilon, \quad\forall m \geq \dfrac{nL}{\epsilon}, \quad\forall \ n \geq M \geq N \geq N_0.
\end{equation} 
\end{lemma}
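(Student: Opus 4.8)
The plan is to squeeze each of the two quantities $P_N(\phi_m/m)$ and $P_M(\phi_m/m)$ into a short interval whose endpoints depend only on $N$ (resp.\ $M$) and not on $m$, and then to observe that these two intervals are themselves $\epsilon$-close once $N$ and $M$ are large. Throughout I use the nesting ${\mathcal M}_N \subseteq {\mathcal M}_{N+1}$, which gives $P_N(\psi) \le P_M(\psi)$ for every continuous $\psi$ and $P^*_N(\Phi) \le P^*_M(\Phi)$ whenever $N \le M$, where $P^*_N(\Phi) := \sup_{\nu \in {\mathcal M}_N}\{h(\nu)+\int\Phi\,d\nu\}$. In particular the quantity inside the absolute value in (\ref{1.5}) is nonnegative and equals $P_M(\phi_m/m) - P_N(\phi_m/m)$, so only an upper bound is needed.

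The key observation is elementary. If $\nu \in {\mathcal M}_N$ and $k \ge N$, then by the very definition of ${\mathcal M}_N$ together with the fact that $\Phi = \inf_{j>0}\phi_j/j$ holds $\nu$-almost everywhere (so $\Phi \le \phi_k/k$ $\nu$-a.e.),
$$\int \Phi\, d\nu \;\le\; \int \frac{\phi_k}{k}\, d\nu \;<\; \int \Phi\, d\nu + \epsilon .$$
Adding $h(\nu)$ and taking the supremum over $\nu \in {\mathcal M}_N$ yields $P^*_N(\Phi) \le P_N(\phi_k/k) \le P^*_N(\Phi) + \epsilon$ for every $k \ge N$, and likewise $P^*_M(\Phi) \le P_M(\phi_k/k) \le P^*_M(\Phi) + \epsilon$ for every $k \ge M$. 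Since $m \ge nL/\epsilon$ and $n \ge M \ge N$ we have $m \ge M \ge N$ (it suffices to work in the regime $\epsilon \le L$, which is the only one of interest; alternatively one routes through (\ref{1.1.1}) to reduce to the index $n \ge M$), so both bounds apply with $k = m$ and therefore
$$P_M\!\left(\frac{\phi_m}{m}\right) - P_N\!\left(\frac{\phi_m}{m}\right) \;\le\; \bigl(P^*_M(\Phi) + \epsilon\bigr) - P^*_N(\Phi) \;=\; \bigl(P^*_M(\Phi) - P^*_N(\Phi)\bigr) + \epsilon .$$

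It then remains to kill $P^*_M(\Phi) - P^*_N(\Phi)$, and this is exactly where the monotone convergence (\ref{pressure.sub.6.1}) is essential: since $\{P^*_N(\Phi)\}$ increases to $P^*(\Phi)$, one fixes $N_0$ with $P^*(\Phi) - \epsilon < P^*_{N_0}(\Phi)$, after which $0 \le P^*_M(\Phi) - P^*_N(\Phi) \le P^*(\Phi) - P^*_{N_0}(\Phi) < \epsilon$ for all $n \ge M \ge N \ge N_0$; combining with the previous display gives $P_M(\phi_m/m) - P_N(\phi_m/m) < 2\epsilon < 3\epsilon$, which is (\ref{1.5}). The argument is short and the only genuinely delicate point is the uniformity: one must choose a single $N_0$ that works simultaneously for the whole range $n \ge M \ge N \ge N_0$ and for all admissible $m$, and this is furnished precisely by the $m$-independence of the two-sided bound above combined with the monotone convergence $P^*_N(\Phi) \uparrow P^*(\Phi)$ already established in Lemma \ref{lemma.1.3}.
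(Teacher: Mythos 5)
Your argument is correct and follows essentially the same route as the paper: you squeeze $P_N\left(\frac{\phi_m}{m}\right)$ between $P^*_N(\Phi)$ and $P^*_N(\Phi)+O(\epsilon)$ using the definition of ${\mathcal M}_N$ together with $\Phi\leq \phi_m/m$ a.e., and then make $P^*_M(\Phi)-P^*_N(\Phi)<\epsilon$ via the monotone convergence $P^*_N(\Phi)\uparrow P^*(\Phi)$ from Lemma \ref{lemma.1.3}, exactly as the paper does. The only cosmetic difference is that the paper always passes from index $n$ to index $m$ through (\ref{1.1.1}) (hence the $2\epsilon$ in the squeeze and $3\epsilon$ overall), whereas you invoke $m\geq N$ directly under the harmless normalization $\epsilon\leq L$, correctly flagging the fallback through (\ref{1.1.1}) otherwise.
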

\begin{proof}
By (\ref{definition.Phi.1}), (\ref{definition.M_N}) and (\ref{1.1.1}) we have that
\begin{equation}
\int\Phi{d\nu} \leq \int\dfrac{\phi_m}{m}d\nu < \int\Phi{d\nu} + 2\epsilon \quad\forall m \geq \dfrac{nL}{\epsilon}, \quad\forall \ n \geq N, \quad\forall \ \nu \in {\mathcal M}_N.
\end{equation}
Therefore, for every $N > 0$,
\begin{equation}\label{6}
P_N(\Phi) \leq P_N\left(\dfrac{\phi_m}{m}\right) \leq P_N(\Phi) + 2\epsilon \quad\forall m \geq \dfrac{nL}{\epsilon}, \quad\forall \ n \geq N.
\end{equation}
By Lemma \ref{lemma.2}
$$
P^*(\Phi) = \sup_{N > 0}P_N(\Phi).
$$
Then. we can choose $N_0 > 0$ such that
\begin{equation}\label{7}
|P_N(\Phi)-P_M(\Phi)| < \epsilon, \quad\forall \ M \geq N \geq N_0. 
\end{equation}
From (\ref{6}) and (\ref{7}), we get that
$$
0 \leq P_M\left(\dfrac{\phi_m}{m}\right) - P_N\left(\dfrac{\phi_m}{m}\right) < 3\epsilon, \quad\forall m \geq \dfrac{nL}{\epsilon}, \quad\forall \ n \geq M \geq N \geq N_0,
$$
so proving the Lemma.
\end{proof}

\begin{proof}[Proof of Lemma \ref{lemma.1.4}]
First notice that
$$
\left|P_N\left(\dfrac{\phi_n}{n}\right) - P_M\left(\dfrac{\phi_m}{m}\right)\right|  \leq  \left|P_N\left(\dfrac{\phi_n}{n}\right) - P_N\left(\dfrac{\phi_m}{m}\right)\right| + \left|P_N\left(\dfrac{\phi_m}{m}\right) - P_M\left(\dfrac{\phi_m}{m}\right)\right|.
$$
By (\ref{1.1.1}), in the proof of Lemma \ref{lemma.1.1},
\begin{equation}\label{4}
\forall \ n > 0:  \quad\left|P_N\left(\dfrac{\phi_n}{n}\right) - P_N\left(\dfrac{\phi_m}{m}\right)\right| < \epsilon, \quad\forall \ N > 0, \ \forall \ m \geq \dfrac{nL}{\epsilon}.
\end{equation}
We thus conclude, using (\ref{1.5}) in Lemma \ref{lemma.1.5}, that
\begin{equation}\label{8}
\left|P_N\left(\dfrac{\phi_n}{n}\right) - P_M\left(\dfrac{\phi_m}{m}\right)\right| < 4\epsilon, \quad\forall \ m \geq \dfrac{nL}{\epsilon}, \quad \forall \ n \geq M \geq N \geq N_0,
\end{equation}
This proves that
$$
\left\{P_N\left(\dfrac{\phi_n}{n}\right)\right\}_{n,N}
$$
is a Cauchy sequence.
\end{proof}


\begin{thebibliography}{}




\bibitem {barreira} Barreira, L.: A non additive thermodynamic formalism and applications to dimension theory of hyperbolic dynamical systems. Erg. Theory \& Dyn. Sys. \textbf{16} (1996), 871-927 

\bibitem {barreira.2011} Barreira, L.: Thermodynamic Formalism and Applications to Dimension Theory, Birkh\"auser, Springer Basel AG 2011

\bibitem {barreira.gelfert} Barreira, L. - Gelfert, K.: Dimension estimates in dynamical systems: a survey of recent results, Erg. Th. \& Dyn. Sys. \textbf{31} (2011), 641-671

\bibitem {barreira.iommi} Barreira, L. - Iommi, G.: Multifractal analysis and phase transitions for hyperbolic and parabolic horseshoes, Israel Journal of Mathematics \textbf{181(1)} (2011), 347-379

\bibitem {barreira.pesin} Barreira, L. - Pesin, Ya.: Non Uniform Hyperbolicity, Encyclopedia of Mathematics and its Applications, Cambridge University Press, (2007)



\bibitem {bowen} Bowen, R.: Equilibrium States and the Ergodic Theory of Anosov Diffeomorphisms, Lecture Notes in Math. 409, Springer Verlag, New York (1975)

\bibitem {cao.feng.huang} Cao, Y.-L.-Feng, De-Jun, Huang, W.: The thermodynamic formalism for subadditive potentials, Disc. \& Cont. Dyn. Sys. \textbf{20(3)} (2008), 639-657

\bibitem {cao.hu.zhao} Cao, Y. - Hu, H. - Zhao, Y.: Nonadditive measure-theoretic pressure and applications to dimensions of an ergodic measure, Erg. Th. \& Dyn Sys \textbf{33} (2013), 831-850 

\bibitem {dolgopyat.pesin} Dolgopyat, D. - Pesin, Ya.: Every compact manifold carries a completely hyperbolic diffeomorphism, Erg. Th. \& Dyn Sys, 22 (2002), 409-435

\bibitem {eckmann.ruelle.1985} Eckmann, J.P. - Ruelle, D.: Ergodic theory of chaos and strange attactors, Review of Modern Physics, \textbf{57(3)} (1985), 617-656

\bibitem {falconer} Falconer, K. J. : A subadditive thermodynamic formalism for mixing repellers, J. Phys. A: Math. Gen. \textbf{21} (1988) L737-L742

\bibitem {falconer.1985} Falconer, K. J.: The geometry of fractal sets, Cambridge University Press, Cambridge

\bibitem {gelfert.wolf} Gelfert, K. - Wolf, C.  On the distribution of periodic orbits. Disc. \& Cont. Dyn. Sys., Series A \textbf{26(3)} (2010), 949-966

\bibitem {gelfert.2009} Gelfert, K.: Equality of pressures for diffeomorphisms preserving hyperbolic measures, Math. Z. \textbf{261(4)} (2009) 711-724 

\bibitem {gelfert.2010} Gelfert, K.: Repellers for non-uniformly expanding maps with singular or critical points, Bulletin of the Brazilian Mathematical Society, New Series 41(2) (2010), 237-257

\bibitem {katok} Katok, A. : Lyapunov exponents, entropy and periodic orbits of diffeomorphisms, Publ. Math. IHES \textbf{51} (1984), 137-173 


\bibitem {katok.mendoza} Katok, A. - Mendoza, L.: Dynamical Systems with Nonuniformly Hyperbolic Behavior. Supplement to Introduction to the Modern Theory of Dynamical Systems, by Katok, A. and Hasselblat, B., Encyclopedia of Math. and its Applications. Vol. 54. Cambridge University Press, 1995 

\bibitem {kingman} Kingman, J.F.C.: The subadditive ergodic theory, Ann. of Probability, \textbf{1(6)} (1973), 883-909

\bibitem {liang.liu.sun} Liang, Ch.-Liu, G.-Sun, W.: Approximation property on invariant measure and Oseledets splitting in nonuniformly hyperbolic systems, Trans. of the Amer. Math. Soc. \textbf{361(3)} (2009), 1543-1579
  
\bibitem {luzzatto.sanchez} Luzzatto, S. - S\'anchez-Salas, F.: Uniform hyperbolic approximations of measures with non zero Lyapunov exponents, Proc. of the Amer. Math. Soc. \textbf{141(9)} (2013), 3157-3169

\bibitem {manhe} Ma\~n\'e, R. : Ergodic Theory of Differentiable Dynamics, Series of Modern Surveys in Mathematics, Volume 8, Ergebnisse Der Mathematik Und Ihrer Grenzgebiete. 3. Folge a, Springer Verlag, London (1987)


\bibitem {mendoza.1988} Mendoza, L. : Ergodic attractors for diffeomorphisms of surfaces, J. London Math. Soc. \textbf{s2-37(2)} (1988), 362-374 




\bibitem {pesin.pitskel} Ya. B. Pesin and B. S. Pitskel': Topological pressure and the variational principle for non-compact sets, Functional Anal. Appl. \textbf{18} (1984), 5063-

\bibitem {ruelle.1973} Ruelle, D: Statistical mechanics on a compact set with $\enteros^n$ action satisfying expansiveness and specification, Trans. of Amer. Math. Soc. \textbf{187} (1973), 237-251

\bibitem {ruelle.2004} Ruelle, D.: Thermodynamic Formalism, The Mathematical Structures of Equilibrium Statistical Mechanics, Second Edition, Cambridge University Press, Cambridge (2004)

\bibitem {ruelle.1979} Ruelle, D.: The ergodic theory of differentiable dynamical systems, Publ. Math. IHES \textbf{50(1)} (1979)



\bibitem {pesin} Pesin, Ya.: Dimension theory in Dynamical Systems, Chicago Lecture Notes in Mathematics, Contemporary Views and Applications, The University of Chicago Press, Chicago, 1997

\bibitem {pesin.2010} Climenhaga, V. - Pesin, Ya.: Open problems in the theory of non-uniform hyperbolicity, Disc. \& Continuous Dyn. Sys. \textbf{27(2)} (2010)

\bibitem {sanchez-salas.2015} Fernando J. S\'anchez-Salas: On the approximation of dynamical indicators in systems with nonuniformly hyperbolic behavior, arXiv:1505.02473 (2015).

\bibitem {sanchez-salas.2015.2} Fernando J. S\'anchez-Salas: A note on Eckmann-Ruelle's conjecture, preprint

\bibitem {sanchez-salas.1} Fernando J. S\'anchez-Salas: Ergodic attractors as limits of hyperbolic horseshoes, Erg. Theory \& Dyn. Sys. \textbf{22} (2002), 571-589

\bibitem {sanchez-salas.2} Fernando J. S\'anchez-Salas: Dimension of Markov towers for non uniformly expanding one dimensional system, Disc. \& Continuous Dyn. Sys. \textbf{9(6)} (2003)


\bibitem {walters} Walters, P.: A variational principle for the pressure of continuous transformations, Amer. Journal of Math., \textbf{97(4)} (1975), 937-971




\bibitem {wang.sun} Wang, Z. - Sun, W.: Lyapunov exponents of hyperbolic measures and hyperbolic periodic orbits, Trans. of the Amer. Math. Soc. \textbf{362(8)} (2010), 4267-4282



\end{thebibliography}
\end{document}